\newtheorem{definition}{Definition}
\newtheorem{lem}{Lemma}
\newtheorem{thm}{Theorem}
\newtheorem{prop}{Proposition}
\newtheorem{claim}{Claim}
\newtheorem{cor}{Corollary}
\newtheorem{rk}{Remark}
\newcommand{\Z}{\mathbb{Z}}
\newcommand{\modif}[1]{\textcolor{magenta}{#1}}
\providecommand{\keywords}[1]{\textbf{Keywords:} #1}
\title{The Rightmost Particle of the Contact Process on Dynamic Random Environments}
\author{Isabella Alvarenga\footnote{ \textsc{Department of Statistics, University of Warwick, United Kingdom}\texttt{isabella.goncalves-de-alvarenga@warwick.ac.uk}}, Aurelia Deshayes\footnote{ \textsc{Univ Paris Est Creteil, Univ Gustave Eiffel, CNRS, LAMA UMR8050, F-94010 Creteil, France} and \textsc{IRL CNRS IFUMI-2030, Montevideo, Uruguay}  \texttt{aurelia.deshayes@u-pec.fr}}}
\begin{document}

\maketitle

\begin{abstract}
We study the behaviour of the rightmost occupied site in two models: the Spont process and the contact process with inherited sterility, in dimension 1. Both can be viewed as contact processes evolving in dynamic random environments, where the environment may itself depend on the state of the process. In the Spont process, blocking particles appear spontaneously, while in the inherited sterility model, sterile sites arise as offspring of occupied ones. Each model presents distinct mathematical challenges: the Spont process lacks self-duality, whereas the inherited sterility process is non-attractive. We establish a law of large numbers and a central limit theorem for the position of the rightmost occupied site. Our approach is based on the construction of a sequence of renewal times, defined through a detailed analysis of active infection paths. These results are obtained in the supercritical regime of the Spont process.
\end{abstract}

\keywords{Interacting particle system, contact process, dynamic random environment, renewal times}



\section{Introduction}\label{Intro}

\paragraph{}The processes studied in this work are both modifications of a well-known process called the \textbf{contact process} introduced in \cite{harris1974contact}. This classical contact process is a continuous-time Markov process~$(\zeta_t)_{t\geq 0}$ taking values in~$\{0,1\}^{\mathbb{Z}}$ where sites (elements of~$\mathbb Z$) in state $1$ are said to be \textbf{occupied} and sites at state $0$ are called \textbf{empty}. This process depends on only one parameter~$\lambda$ and is described as follows: occupied sites die at rate $1$ leaving that site empty, and empty sites are occupied at rate~$\lambda$ times the number of nearest occupied neighbours. One can think of this process as a model for population dynamics, where each occupied site corresponds to an individual that can reproduce or die.

Regarding this classical contact process, we highlight two properties: the first one is the existence of a phase transition where, in one regime, the whole population certainly goes extinct, whereas in the other it has a positive chance of survival; the second result states that, if the population survives, it spreads linearly in space. We make this notion precise in the next paragraphs.

Consider a contact process~$(\zeta_t^0)_{t \geq 0}$ with parameter~$\lambda$, started from a single occupied site at the origin. There exists a critical value~$\lambda_c \in (0, \infty)$ such that the process dies out—that is, reaches the configuration where every site is empty—with probability one if and only if~$\lambda \leq \lambda_c$:
\[
\mathbb{P}\left( \exists t \geq 0 : \forall x \in \mathbb{Z},~\zeta_t^0(x) = 0\  \right) = 1 \quad \text{if and only if} \quad \lambda \leq \lambda_c.
\]

Note that the configuration in which all sites are empty is an absorbing state for the process. When~$\lambda > \lambda_c$, the process is said to be supercritical; in that case, the process has a positive probability of what we then call survival. If, on the other hand~$\lambda \leq \lambda_c$, we say that the process is subcritical. The existence of this phase transition was established in~\cite{harris1974contact}, with the critical case~$\lambda = \lambda_c$ being treated in~\cite{durrett_griffeath1983supercritical}. 

For a supercritical contact process~$(\zeta_t^0)_{t\geq 0}$, there exists~$\alpha = \alpha (\lambda)>0$ such that, for the law of this process conditioned on survival, one has the following:    
    \begin{equation} \label{speedCP}
        \lim_{t\rightarrow\infty}\frac{r(\zeta_t^0)}{t}=\alpha \text{ almost surely and in }L^1
    \end{equation}
where~$r(\zeta_t^0)=\sup\{x\in\mathbb Z\,:\,\zeta_t^0(x)=1\}$ denotes the position of the rightmost occupied site at time $t$. The proof of~\eqref{speedCP} in the $L^1$ sense is given in~\cite{durrett1980growth}, while the almost sure version appears in~\cite{durrett_griffeath1983supercritical}. 

The contact process has a very convenient property of being attractive, which informally means that the more occupied sites we have in the initial configuration, the more occupied sites we will have at all times. This property plays a key role in the proof of~\eqref{speedCP} allowing the use of subadditive techniques (\cite{kingman1973subadditive,liggett1985interacting}). A little later, Galves and Presutti~\cite{galvespresutti} on one side and Kuczek~\cite{kuczek1989central} on the other proved a central limit theorem for the trajectory of the rightmost particle. Kuczek in particular constructs break points that allow him to divide the trajectory into i.i.d. pieces; we will come back to this later. 

The contact process model is very rich and has been the subject of numerous extensions (\cite{krone1999,durrett_schinazi2000boundary,broman2007stochastic,garetmarchand2014bacteria,deshayes2014}). In this work, we study two variants of the contact process introduced by~\cite{velasco2024}: the \textbf{Spont process} and the \textbf{contact process with Inherited Sterility}. Both can be seen as contact processes on dynamic random environments, which have attracted increasing attention in recent years. 

The first work on that was done in \cite{broman2007stochastic}, where the author introduced a model for which the recovery rate at each site changes over time (this is the environment); this model was further developed in \cite{steif2007critical} and \cite{remenik2008contact} where it is proved that the critical value is independent from the initial environment and that the process dies out at criticality (using a Bezuidenhout-Grimmett type construction). In~\cite{linker2020contact} the authors considered a dynamic bond percolation, that is, edges between neighbouring lattice sites open and close for transmission, and studied the behaviour of the critical value with respect to the parameters (the velocity of the changes and the density of the open edges); some questions left open have been completed by~\cite{hilario2022results}.  This type of process has also been the subject of recent studies on other graphs (Bienaymé–Galton–Watson trees in~\cite{cardona2024contact}, random d-regular graphs with switching bond dynamics in~\cite{leite2024contact} and~\cite{schapira2023contact}).


In most of these models, there are two processes involved: a Markovian background which represents the environment and which \textit{does not} depends on the contact process, and the contact process itself which evolves according to the background process. An important difference for the models we are studying here is that the background and the contact process are \textit{intertwined}, or, in other words, that the environment \textit{depends} on the contact process. 

The two variants studied in the present work, abbreviated by Spont and IS processes, have an additional state in addition to the classic empty ($0$) and occupied ($1$). This new state $-1$ is considered \textbf{blocked} (or sterile for IS) in the sense that it prevents the propagation of the population of $1$. In the Spont process, blocked sites appear spontaneously. In the IS process, they appear as descendants of occupied ones; we will sometimes refer to 1s as fertile individuals. For parameters $\lambda\in [0,\infty)$ and $p\in [0,1]$, the dynamics are described as follows:
\begin{itemize}
    \item \textbf{Spont:}  non-empty sites, whether of type $1$ or $-1$, become empty at a rate $1$; empty sites become spontaneously blocked at rate~$2\lambda\times (1-p)$; finally, occupied sites send a copy of themselves to each adjacent empty site at a rate of $\lambda \times p$. 
 
 \item \textbf{IS:} non-empty sites, whether of type $1$ or $-1$, become empty at a rate of $1$; sites occupied by 1 can be thought as fertile individuals that produce offspring, sending a child to each adjacent empty site at a rate of $\lambda$. Then, with probability $p$, this descendant is also a fertile individual and thus will have state $1$; however, with probability $1-p$, that descendant will be blocked (or sterile) and the site will then have state $-1$.
\end{itemize}

These two variants of the contact process were introduced in~\cite{velasco2024}. In that work, Velasco introduced the contact process with inherited sterility as a framework for studying biological control strategies aimed at suppressing invasive species through the release of modified individuals. This approach extends the Sterile Insect Technique (SIT)~\cite{SIT}, developed in the 1950s by E. Knipling to combat the New World screw worm. SIT relies on the release of radiation-sterilized males so that matings with fertile females do not produce offspring, leading to population decline. However, complete sterilization often requires radiation levels that impair male competitiveness. In the inherited sterility method~\cite{ISI_north}, lower radiation doses produce partially sterile offspring, which can themselves generate both fertile and sterile descendants. The transmission of sterility across generations compensates for reduced mating success while still driving population suppression. The Spont process was introduced as a tool to study the IS process, but it also has an independent interest as a model in its own right, and it can be seen as a particular case of the continuous version of the bacteria model introduced in~\cite{garetmarchand2014bacteria}.

In this work, we prove a law of large numbers (Theorems~\ref{thm_Spont_Speed} and~\ref{thm_Speed_IS}) and a central limit theorem (Theorems~\ref{thm_Spont_CLT} and~\ref{thm_CLT_IS}) for the position of the rightmost site in state~$1$ in both the Spont process and the contact process with inherited sterility. Our results hold only for the set of parameters for which the Spont process can survive, which is potentially smaller than the set of parameters for which the IS process survives. Even for this, and even though we are in dimension 1, we stress that proving the ballistic motion of the rightmost particle is not easy. 

The main challenge in studying the contact process with inherited sterility is the lack of attractiveness in the model. Indeed, an increase in the number of fertile individuals can also lead to a rise in sterile individuals, paradoxically reducing the process's chances of survival and spread. There are other models in which linearity has been proven without attractivity on dimension~1. This is the case, for example, of the East model~\cite{blondel,GLM15} and the Fredrickson-Andersen one spin facilitated model (FA-1f)~\cite{blondeldeshayes}. These are both kinetically constrained models for which we know that Bernoulli is an invariant measure and for which there exist strong results of relaxation to equilibrium \cite{BCMRT13}, which is a key element in studying the behavior of the front.

The main idea of our work is to identify, for each process, a sequence of special times—called renewal times—at which a specific property holds. These times are chosen so that the increments of the rightmost particle’s position between them are i.i.d. and occur often enough to prevent large deviations in between. The main difficulty is that renewal times are not stopping times, as they depend on the entire future of the process. This renewal-time approach is classical in the study of random walks in random environments (see, e.g., \cite{sznitman2004topics}) and has also been applied to the contact process, notably by \cite{kuczek1989central}. Variants of this method appear in later works such as \cite{mountford2016functional} and \cite{mountford2019asymmetric}.

This work is structured as follows. In Section~\ref{Sec_defAndResults}, we define both models and state the main results. In Section~\ref{Sec_SpectialProperty} and~\ref{Sec_Prop}, we define the special property mentioned above, and prove his occurrence. In Section~\ref{Sec_Markov} we show that the displacements of the rightmost particle are i.i.d. under an appropriate conditional probability measure, which leads to the proof of the main theorems in Section~\ref{sec_ProofMainTheorems}. Finally, Section ~\ref{appendix_construction} is devoted to construct rigorously the processes through graphical paths.

\section{Models and main results} \label{Sec_defAndResults}

\subsection{Notations about configurations} 

\paragraph{} Both processes we study are continuous-time Markov processes taking values on the space of configurations $\{-1,0,1\}^{\mathbb{Z}}$. We recall that a site in state $1$ is said to be \textbf{occupied} (by a fertile particle), a site in state $-1$ is said to be \textbf{blocked} (by a sterile particle) and a site in state $0$ is called \textbf{empty}. Both of these models are parametrized by two parameters $\lambda\in [0,\infty)$ and $p\in [0,1]$.

For $x\in \mathbb{Z}$, we consider    
    \begin{equation*}
       \mathcal{C}^x=\left\{c\in \{-1,0,1\}^{\mathbb{Z}}\,:\,\sup\{y\in \mathbb{Z}\,:\,c(y)=1\}=x\right\} 
    \end{equation*}
    the subspace of configurations with a rightmost 1 at site $x$. 
    Let also $\mathcal{C} = \cup_{x\in \mathbb{Z}}\mathcal{C}^x$. 

For $c\in \mathcal{C}$, we can define its rightmost occupied particle:
\[r(c)=\sup\{x\in \mathbb{Z}\,:\,c(x)=1\}.\]

For $x\in \mathbb{Z}$, let $\delta_x^{-}$ be the configuration where $x$ is occupied all the other are blocked sites. We often refer to the configuration $\delta_x^-$ as the most \textit{hostile} environment. Formally, let: 

        \begin{equation*}
        \delta_x^{-}(y)=
            \begin{cases}
                -1 &\text{ if }y\neq x \\
                ~1 &\text{ if }y=x.
            \end{cases}
        \end{equation*}
For two configurations $c_1,c_2$ in $\{-1,0,1\}^{\mathbb{Z}}$, we say that $c_1 \lesssim c_2$ if $c_1(x)\leq c_2(x)$ for all $x\in \mathbb{Z}$.

\subsection{Spont dynamics}

\paragraph{} For the Spont process, we denote the state of a site $x\in \mathbb{Z}$ at time $t\in[0,\infty)$ by $\xi_t(x) \in \{-1,0,1\}$. The transition rules for a site $x\in \mathbb{Z}$ and a given configuration $\xi\in \{-1,0,1\}^{\mathbb{Z}}$ are as follows:

\begin{eqnarray*}
    0 &\longrightarrow 1 \, &\text{ at rate } \lambda \times p \times |\{y\sim x\}\,:\,\xi(y)=1|, \\
     0 &\longrightarrow -1\,&\text{ at rate } 2\lambda \times (1-p), \\
    1,-1 &\longrightarrow 0\, &\text{ at rate }1.
\end{eqnarray*}
where we write~$x\sim y$ for when~$x,y\in\mathbb Z$ are such that~$|x-y|=1$. 

We emphasize a few remarks regarding this model. The first one is that, similar to the model in~\cite{remenik2008contact}, occupied sites cannot give birth on top of blocked sites and blocked sites do not reproduce. However, unlike the model of~\cite{remenik2008contact}, blocked sites \textit{cannot} appear on top of occupied sites. Note also that when~$p=1$, this process reduces to the classical contact process. So far, the parametrization of the Spont process in terms of $\lambda$ and $p$ might have come across as something mysterious, as one could have actually introduced an independent parameter to rule the blocking phenomena but it will make sense in Remark~\ref{CouplingMainThm}.

We denote by $(\xi_t^c)$ the process starting with initial configuration $c\in\mathcal C$ (it well be constructed in subsection~\ref{section_graph_constr}). For this model, the subspace of configurations without occupied sites is absorbing in the sense that no occupied sites can no longer appear. If the process reaches such a configuration, we say that it dies out and we can define its extinction time $\tau_{\xi}$ as the first moment when there are no occupied sites present in the process:
\begin{equation}
     \tau_{\xi}({c}) = \inf\{t\geq 0\,:\,\forall x\in \mathbb{Z},\,\xi_t^c(x)\neq 1\}. 
\end{equation}

By comparing the Spont process with a well-chosen oriented percolation, Velasco~\cite{velasco2024} proved a phase transition phenomenon. 

\begin{rk}[Theorem 4 of~\cite{velasco2024}]\label{SupercriticalSpont}
    For $\lambda>\lambda_c$, there exists a $\tilde{p}\in[\frac{\lambda_c}{\lambda},1)$ such that the process $(\xi_t^c)_{t\geq 0}$ started from any $c\in\mathcal C$ with parameters $\lambda>\lambda_c$ and $p>\tilde{p}$ survives with positive probability, i.e., $\mathbb{P}(\tau_\xi(c)=\infty)>0$. 
\end{rk}

As we have mentioned before, we are interested in the behaviour of the rightmost occupied site over time. Regarding this observable, we will prove both a strong law of large numbers and a central limit theorem. These are formally stated in the next theorems.


\begin{thm}[Speed of the rightmost particle for Spont] \label{thm_Spont_Speed}
    Let $(\xi_t)$ be a Spont process with parameters~$\lambda>\lambda_c$ and~$p>\tilde{p}$ as in Remark~\ref{SupercriticalSpont}. Then, there exists a constant~$\mu_{\mathrm{SP}}>0$ such that, for any initial configuration $c\in\mathcal C$, conditioned on survival, we have that
    \begin{equation}\label{eq_speed_Spont}
        \frac{r(\xi_t^c)}{t}\xrightarrow[t\rightarrow\infty]{} \mu_{\mathrm{SP}} \quad \text{ almost surely. }
    \end{equation}
\end{thm}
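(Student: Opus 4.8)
The plan is to establish the law of large numbers for the rightmost particle via the renewal-time strategy announced in the introduction, following the spirit of Kuczek but adapted to the fact that the Spont process is not self-dual. First I would set up the graphical construction of the process (deferred to the appendix in this paper) so that the trajectory $t \mapsto r(\xi_t^c)$ is defined on a single probability space driven by independent Poisson clocks. The central object will be a sequence of \emph{renewal times} $0 = T_0 < T_1 < T_2 < \cdots$ at which a ``special property'' holds — roughly, that the rightmost particle at time $T_k$ is a descendant of an active infection path that will never be overtaken, and that everything to the right of $r(\xi_{T_k})$ is in a ``fresh'' state (the most hostile environment $\delta^-$, or equivalently empty to the right), so that the future evolution of the front depends only on the graphical data in the forward light-cone and is independent of the past. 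The key structural fact, proved using the special property of Sections~\ref{Sec_SpectialProperty}--\ref{Sec_Markov}, is that the displacement-and-time increments
\[
\bigl(X_k, \tau_k\bigr) := \bigl(r(\xi_{T_k}) - r(\xi_{T_{k-1}}),\, T_k - T_{k-1}\bigr), \qquad k \geq 1,
\]
are i.i.d.\ under the probability measure conditioned on survival.

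Given this i.i.d.\ decomposition, the law of large numbers follows from the strong law applied to both coordinates. I would first verify the integrability $\mathbb{E}[\tau_1] < \infty$ and $\mathbb{E}[|X_1|] < \infty$; the renewal times must be constructed to occur ``often enough,'' meaning the inter-renewal gaps $\tau_k$ should have exponential or at least stretched-exponential tails, which simultaneously controls the displacement $X_k$ since the front cannot move faster than the fastest infection path. Writing $N(t) = \max\{k : T_k \leq t\}$ for the number of completed renewals by time $t$, the strong law gives $T_k/k \to \mathbb{E}[\tau_1]$ and $\sum_{j\le k} X_j / k \to \mathbb{E}[X_1]$ almost surely, and the standard renewal-theory sandwich bounding $r(\xi_t^c)$ between $\sum_{j \le N(t)} X_j$ and $\sum_{j \le N(t)+1} X_j$ yields
\[
\frac{r(\xi_t^c)}{t} \xrightarrow[t\to\infty]{} \frac{\mathbb{E}[X_1]}{\mathbb{E}[\tau_1]} =: \mu_{\mathrm{SP}} \quad \text{almost surely.}
\]
The positivity $\mu_{\mathrm{SP}} > 0$ requires a separate argument: one must show the front has a genuine ballistic tendency, i.e.\ $\mathbb{E}[X_1] > 0$, which I would obtain by comparison with a supercritical oriented percolation (the same Velasco comparison behind Remark~\ref{SupercriticalSpont}) guaranteeing linear growth of the occupied region to the right.

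The hard part, as the introduction flags, is that the renewal times are \emph{not} stopping times: the special property at $T_k$ asserts something about the infinite future (that a particular infection path survives forever and remains rightmost), so one cannot directly invoke the strong Markov property to get independence. I would handle this exactly as in the random-walk-in-random-environment literature and in Kuczek's work: decompose survival into the event that a first candidate renewal time ``works'' versus ``fails,'' and on failure restart the construction afresh after the failure is detected. Conditioning on survival converts the a.s.-infinite sequence of trials into a geometric number of attempts, and a renewal/regeneration argument shows the successful increments are i.i.d.\ with the correct conditional law. Making this rigorous — verifying the independence of the forward graphical data from the pre-$T_k$ sigma-algebra while the renewal event itself peeks into the future — is where the bulk of the technical work (Sections~\ref{Sec_Prop} and~\ref{Sec_Markov}) lives, and it is the step I expect to be most delicate, particularly establishing that the special property occurs with positive probability and that its failure does not destroy the i.i.d.\ structure of the subsequent increments.
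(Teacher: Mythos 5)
Your overall architecture --- renewal times at which a special property holds, i.i.d.\ space--time increments under a conditioned measure, strong law of large numbers, control of the fluctuations between renewals, and positivity via an oriented-percolation-type comparison --- is exactly the paper's. But the renewal event you propose is the wrong one, and this is precisely the pitfall the paper is built to avoid. You ask that the rightmost particle at $T_k$ generate an infection path that is never overtaken and that the landscape to its right be ``fresh'' (equivalently, that the process restarted from the hostile configuration $\delta^-_{r(\xi_{T_k})}$ survives and dictates the front). This is Kuczek's property for the classical contact process, and the paper explains at the end of Section~\ref{Sec_defAndResults} why it fails here: the Spont process is attractive but \emph{not additive}, so survival of the restarted process does not imply that its front coincides with the front of the original process (attractivity only gives $r_t(\delta^-)\leq r_t(c)$), and for the IS process even survival need not transfer. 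The paper's special property is strictly stronger: at a renewal one requires that for \emph{every} configuration $d\in\mathcal{C}^{r(\xi^c_{t_1})}$ with the same rightmost $1$, the process restarted from $d$ survives and its front agrees with the original front at all subsequent times (the ``invariance property'' of Definition~\ref{specialPropertySpont}). This uniformity over the whole class $\mathcal{C}^{r}$ is what makes the inference lemma (Lemma~\ref{InferenceProperty}) and the Markov-type property (Proposition~\ref{MarkovProp}) work; the remark closing Section~\ref{Sec_Markov} states explicitly that demanding agreement only with the hostile configuration would destroy the i.i.d.\ structure, because how easily such agreement occurs depends on the landscape ahead of the front at the renewal, which is not itself renewed. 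So the mechanism you describe would not deliver the i.i.d.\ increments your argument rests on.

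Two smaller points. First, you claim the increments are i.i.d.\ ``under the probability measure conditioned on survival''; in the paper they are i.i.d.\ under $\bar{\mathbb{P}}^x_\chi=\mathbb{P}(\,\cdot\mid\sigma_0=0)$, i.e.\ conditioned on the special property holding at time $0$, and the theorem's conditioning on survival is recovered by discarding the almost surely finite initial segment up to $\sigma_0$ (Corollary~\ref{CorSpontRenTimes}). Second, the ``renewal-theory sandwich'' bounding $r(\xi^c_t)$ between $\sum_{j\leq N(t)}X_j$ and $\sum_{j\leq N(t)+1}X_j$ is false as stated, since $t\mapsto r(\xi^c_t)$ is not monotone; what is needed, and what the paper proves in Lemma~\ref{lem_timeSpaceRenewalBeforet}, is a stretched-exponential tail bound on $\sup\{|r_s-r_{\sigma_{N(t)}}|\,:\,s\in[\sigma_{N(t)},\sigma_{N(t)+1}]\}$, after which Borel--Cantelli along integers and a Poisson bound between integers finish the proof. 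Your positivity argument for $\mu_{\mathrm{SP}}$ is consistent with the paper's use of the Garet--Marchand estimate (Remark~\ref{smallClusterSPON}), which gives at-least-linear growth of the front on survival.
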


\begin{thm}[CLT the rightmost particle for Spont] \label{thm_Spont_CLT}
  Let $(\xi_t)$ be a Spont process with parameters~$\lambda>\lambda_c$ and~$p>\tilde{p}$ as in Remark~\ref{SupercriticalSpont}. Then, there exists a constant~$\sigma_{\mathrm{SP}}>0$ such that, for~$\mu_{\mathrm{SP}}$ given as in~\eqref{eq_speed_Spont}, for any initial configuration $c\in\mathcal C$, conditioned on survival, we have that
    \begin{equation}
        \frac{r(\xi_t^c) - \mu_{\mathrm{SP}} t}{t^{1/2}}\xrightarrow[t\rightarrow\infty]{\mathrm{(dist)}}\mathcal{N}(0,\sigma_{\mathrm{SP}}^2).
    \end{equation}
\end{thm}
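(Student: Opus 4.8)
The plan is to establish the central limit theorem by exploiting the renewal structure announced in the introduction. The key idea is to construct a sequence of renewal times $0 = T_0 < T_1 < T_2 < \cdots$ together with the associated positions $R_k := r(\xi_{T_k}^c)$, such that the increments $(T_{k+1} - T_k,\, R_{k+1} - R_k)$ for $k \geq 1$ form an i.i.d.\ sequence under the law conditioned on survival (with the first increment possibly having a different, but well-controlled, distribution to absorb the dependence on the initial configuration $c$). The renewal property should be chosen so that at each time $T_k$ the process, viewed from the rightmost particle, has a fixed ``regeneration'' state—morally that the infection path achieving the rightmost position at $T_k$ no longer depends on what happened before $T_k$, and the environment to the right looks fresh. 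This decouples the trajectory into independent pieces, exactly in the spirit of Kuczek's break points~\cite{kuczek1989central}. Once this is in place, the CLT becomes an application of the standard renewal central limit theorem for the position of a cumulative process sampled at i.i.d.\ renewal epochs.

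Concretely, I would first invoke the law of large numbers (Theorem~\ref{thm_Spont_Speed}), which already identifies $\mu_{\mathrm{SP}} = \mathbb{E}[R_2 - R_1]/\mathbb{E}[T_2 - T_1]$ as the ratio of the mean spatial and temporal increments, so both denominators are finite and positive. The crucial analytic input is then a tail bound on the renewal increments: I would show that both $T_{k+1} - T_k$ and $R_{k+1} - R_k$ have finite second moments (ideally exponential or at least polynomial tails of sufficiently high order), using the supercritical comparison with oriented percolation from Remark~\ref{SupercriticalSpont} to control how rarely the regeneration property fails and how far the front can drift in between. Given finite variances, the CLT follows from a by-now-routine argument: write
\begin{equation*}
    r(\xi_t^c) - \mu_{\mathrm{SP}} t = \bigl(R_{N(t)} - \mu_{\mathrm{SP}} T_{N(t)}\bigr) + \bigl(r(\xi_t^c) - R_{N(t)}\bigr) + \mu_{\mathrm{SP}}\bigl(T_{N(t)} - t\bigr),
\end{equation*}
where $N(t) = \max\{k : T_k \leq t\}$ counts the renewals before time $t$. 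The first term is a centered random walk sampled at a random time index $N(t) \approx t/\mathbb{E}[T_2 - T_1]$, and an Anscombe-type theorem combined with the finite-variance hypothesis yields Gaussian fluctuations of order $t^{1/2}$; the remaining two ``boundary'' terms are controlled by the finite moments of a single increment and vanish after dividing by $t^{1/2}$. The limiting variance $\sigma_{\mathrm{SP}}^2$ is then given explicitly by the standard renewal formula in terms of the variances and covariance of $(T_2 - T_1, R_2 - R_1)$.

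I expect the main obstacle to be the construction of the renewal times and the verification of their i.i.d.\ structure, rather than the CLT machinery itself—indeed the bulk of the paper (Sections~\ref{Sec_SpectialProperty}--\ref{Sec_Markov}) is devoted precisely to this. The difficulty, as flagged in the introduction, is that renewal times are not stopping times: whether $T_k$ is a genuine regeneration depends on the entire future of the process (one must know that the rightmost infection path from $T_k$ onward never ``looks back'' and that the process survives). This forces one to work with a carefully chosen conditional measure and to prove a Markov-type renewal property under that conditioning, which is the content of Section~\ref{Sec_Markov}. A secondary technical point specific to the Spont model is the lack of self-duality, so the regeneration argument cannot rely on duality and must instead be phrased directly in terms of active infection paths in the graphical construction; establishing the finite second moment of the inter-renewal times in this setting—i.e.\ that renewals are not too sparse—will require the quantitative supercritical estimates and is where I would concentrate the hardest work.
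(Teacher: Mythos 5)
Your proposal follows essentially the same route as the paper: renewal times with i.i.d.\ space--time increments under a measure conditioned on the regeneration property (the paper's $\sigma_n$ and $\bar{\mathbb{P}}^x_\chi$ of Sections~\ref{Sec_SpectialProperty}--\ref{Sec_Markov}), stretched-exponential tail bounds giving finite second moments, and a decomposition of $r_t - \mu_{\mathrm{SP}}t$ into a centered i.i.d.\ sum plus boundary terms that vanish at scale $t^{1/2}$, with $\mu_{\mathrm{SP}}$ the ratio of mean spatial to temporal increments and $\sigma_{\mathrm{SP}}^2$ given by the renewal variance formula. The only cosmetic difference is that you invoke an Anscombe-type theorem for the randomly indexed sum, whereas the paper carries out that step by hand, centering at the deterministic index $\lfloor t/\mu_T\rfloor$ and controlling the discrepancy with the random index $N(t)$ via the renewal theorem and Kolmogorov's maximal inequality.
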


\subsection{IS dynamics}

\paragraph{} For the contact process with inherited sterility, we denote the state of a site~$x\in \mathbb{Z}$ at time~$t\in[0,\infty)$ by $\eta_t(x) \in \{-1,0,1\}$. The transition rules for a site $x\in \mathbb{Z}$ and a given configuration $\eta\in \{-1,0,1\}^{\mathbb{Z}}$ are as follows:
\begin{eqnarray*}
    0 &\longrightarrow 1\,&\, \text{ at rate } \lambda \times p \times |\{y\sim x\}\,:\,\eta(y)=1|, \\
     0 &\longrightarrow -1\,&\,\text{ at rate } \lambda \times (1-p) \times |\{y\sim x\}\,:\,\eta(y)=1|,\\
     1,-1 &\longrightarrow 0\,& \text{ at rate }1.
\end{eqnarray*}

We emphasize some important aspects of this dynamics: sterile individuals \textit{do not} reproduce and fertile individuals are \textit{not allowed} to send descendants on top of them. Because of that, one can think that sterile individuals act like blocking sites where the fertile individuals cannot reproduce. 


Once again, we denote by $(\eta_t^c)$ the process starting from $c\in\mathcal C$ and we note that the subspace of configurations without any fertile individuals is absorbing under this dynamics. Therefore, as we have done for the Spont process, we define $\tau_{\eta}$ the extinction time of an IS process as:
\begin{equation}
     \tau_{\eta}({c}) = \inf\{t\geq 0\,:\,\forall x\in \mathbb{Z},\,\eta_t^c(x)\neq 1\}. 
\end{equation}

In the subsection~\ref{section_graph_constr}, we will prove the following result.

\begin{rk} \textbf{(Coupling)} \label{CouplingMainThm}
    Consider three configurations~$\xi_0\lesssim \eta_0\lesssim \zeta_0$ such that~$\xi_0,\eta_0\in\{-1,0,1\}^{\mathbb{Z}}$ and~$\zeta_0\in\{0,1\}^{\mathbb{Z}}$. Let~$\lambda\in[0,\infty)$ and $p\in [0,1]$ be given. Consider~ $(\xi_t)_{t\geq 0}$ a Spont process started from~$\xi_0$ and~$(\eta_t)_{t\geq 0}$ a contact process with inherited sterility started from~$\eta_0$, both with parameters~$\lambda$ and~$p$, and~$(\zeta_t)_{t\geq 0}$ be a contact process started from~$\zeta_0$ with parameter~$\lambda\times p$. Then, there exists a coupling of these three processes such that the following condition is satisfied:  
    \begin{equation*}
        \xi_t \lesssim \eta_t \lesssim \zeta_t,\text{ a.s. for all }t\geq 0.
    \end{equation*}
\end{rk}

Figure~\ref{fig_Coupling} illustrates the coupling from Remark \ref{CouplingMainThm}: sites occupied by the three processes are shown in white, the ones occupied by IS and contact (but not Spont) are in light gray and the ones occupied only by the contact process are in dark gray. Empty sites are shown in black. For clarity, blocked sites do not appear, but their presence can sometimes be guessed from vertical blockages. From remarks \ref{SupercriticalSpont} and \ref{CouplingMainThm} one can obtain the following result.

\begin{figure}[h]
    \centering
    \includegraphics[width=0.5\linewidth]{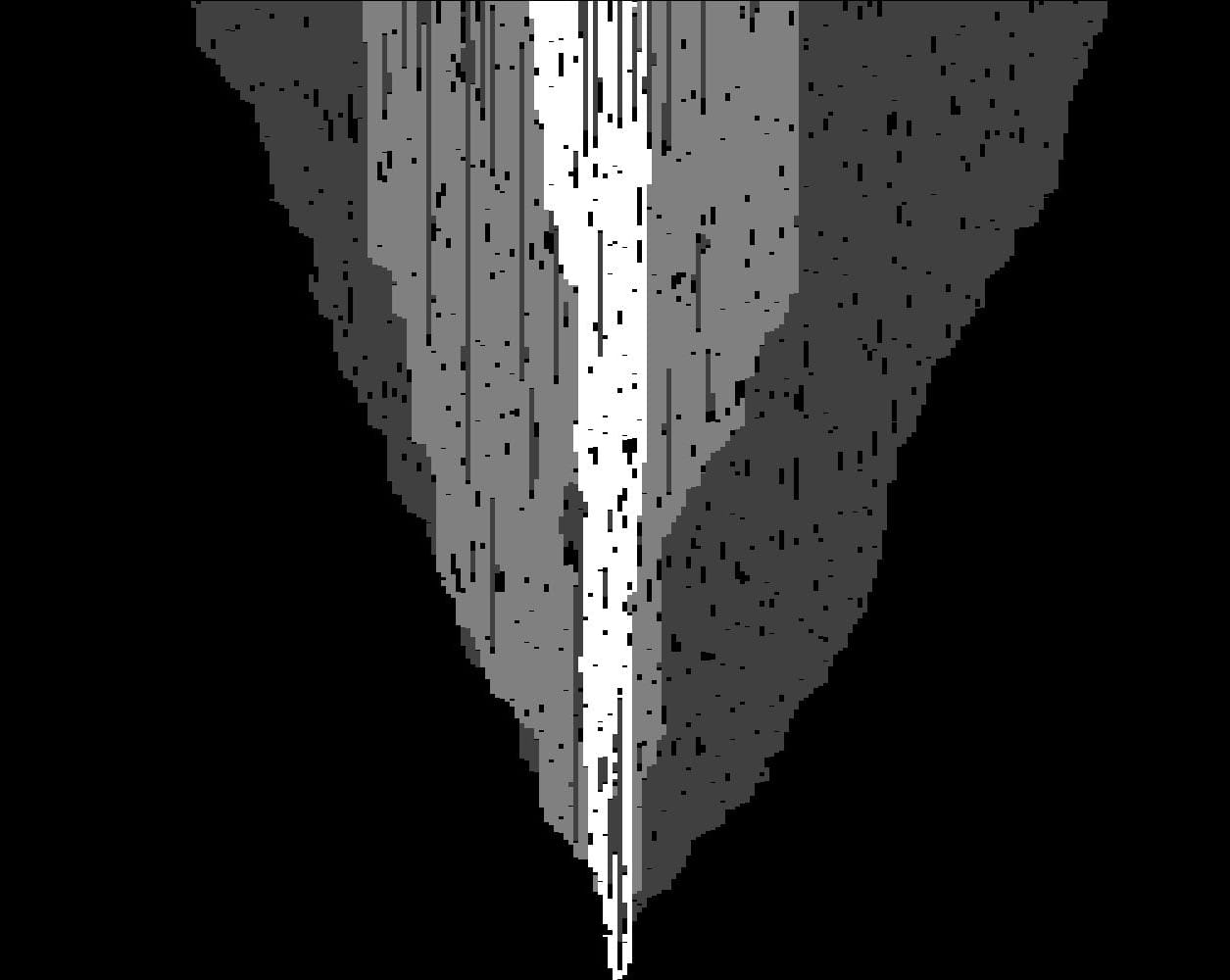}
    \caption{Simulation of the coupled processes.}
    \label{fig_Coupling}
\end{figure}

\begin{rk}[Theorem 1 of~\cite{velasco2024}] \label{ISPosChanceSurvival}
    For~$\lambda>\lambda_c$, there exists~$\bar{p}\in [\frac{\lambda_c}{\lambda},1)$ such that the inherited sterility process~$(\eta_t^c)_{t\geq 0}$ started from any
    ~$c\in\mathcal C$ with parameters~$\lambda$ and~$p>\bar{p}$ survives with positive probability, i.e.,~$\mathbb{P}(\tau_\eta(c)=\infty)>0$. 
\end{rk}

This suggests that, a priori, there could exist a regime in which the inherited sterility process survives while the Spont process dies out. However, we do not explore this regime in the present work. Similar to the Spont process, we denote by~$\mathcal{S}_\eta(c)$ the survival event of an IS process started from~$c\in\mathcal C$, i.e.,
\[
\mathcal{S}_\eta(c)=\{\tau_\eta(c)=\infty\}
=\{\forall t\geq 0, \exists y\in\Z : \eta_t^c(y)=1\}.\]

In the following, we will denote by 
\[
\mathcal{S}_\xi(c)=\{\tau_\xi(c)=\infty\}
=\{\forall t\geq 0, \exists y \in \Z: \xi_t^c(y)=1\}
\]
the survival event of a Spont process started from~$c\in\mathcal C$.

We finally state our main result regarding the contact process with inherited sterility. Similar to what we have obtained for the Spont process, we have a strong law of large numbers and also a central limit theorem for the position of the rightmost fertile individual.

\begin{thm}[Speed of the rightmost particle for IS] \label{thm_Speed_IS}
Let~$(\eta_t)_{t\geq 0}$ be an IS process with parameters~$\lambda>\lambda_c$ and~$p>\tilde{p}$ as in Remark~\ref{SupercriticalSpont}.
      Then, there exists~$\mu_{\mathrm{IS}}>0$ such that for any inital configuration $c\in\mathcal C$, conditioned on $S_\eta(c)$, we have that
     \begin{equation}\label{eq_speed_IS}
        \frac{r(\eta_t^c)}{t}\xrightarrow[t\rightarrow\infty]{} \mu_{\mathrm{IS}} \quad \text{ almost surely. }
    \end{equation}
\end{thm}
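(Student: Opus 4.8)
The plan is to transfer the law of large numbers from the Spont process to the IS process by exploiting the coupling in Remark~\ref{CouplingMainThm}, which is the natural engine here since the two processes differ only in where the blocked/sterile sites come from. Concretely, I would first establish the speed statement for the Spont process (Theorem~\ref{thm_Spont_Speed}) directly via the renewal-time machinery announced in the introduction: identify a sequence of renewal times $0 = T_0 < T_1 < T_2 < \cdots$ at which the \textbf{special property} holds, show that the increments $(r(\xi_{T_{k+1}}) - r(\xi_{T_k}), T_{k+1} - T_k)$ are i.i.d. under the conditional measure given survival, and then apply the strong law of large numbers to the renewal-reward process, obtaining $\mu_{\mathrm{SP}} = \mathbb{E}[r(\xi_{T_1}) - r(\xi_{T_0})] / \mathbb{E}[T_1 - T_0]$ provided both expectations are finite. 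The point is that the very same construction of renewal times should apply to the IS process: the active infection paths that define the renewal times concern only the propagation of the fertile ($1$) individuals, and the blocked sites play the same obstructing role in both models.

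Assuming the IS renewal structure is in place, the concrete steps are as follows. First I would define for the IS process a sequence of renewal times $(T_k^{\eta})$ analogous to those for the Spont process and invoke the result from Section~\ref{Sec_Markov} that the displacement increments $X_k = r(\eta_{T_{k+1}^{\eta}}^c) - r(\eta_{T_k^{\eta}}^c)$ together with the time increments $\Delta_k = T_{k+1}^{\eta} - T_k^{\eta}$ form an i.i.d. sequence under the law conditioned on $\mathcal{S}_\eta(c)$. Second, I would verify the integrability conditions $\mathbb{E}[X_1] < \infty$ and $\mathbb{E}[\Delta_1] < \infty$, which is where the special property pays off: it is engineered so that renewal times occur often enough to rule out heavy tails in the inter-renewal gaps, hence preventing large deviations of the rightmost particle between consecutive renewals. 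Third, writing $r(\eta_t^c)$ between renewal times and using the standard renewal-reward argument (Kingman-type / elementary renewal theorem), I would conclude
\begin{equation*}
\frac{r(\eta_t^c)}{t} \xrightarrow[t\to\infty]{} \mu_{\mathrm{IS}} := \frac{\mathbb{E}[X_1]}{\mathbb{E}[\Delta_1]} \quad \text{almost surely},
\end{equation*}
with the positivity $\mu_{\mathrm{IS}} > 0$ following from $\mathbb{E}[X_1] > 0$.

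To control the positivity of the speed and to guarantee that the rightmost particle cannot lag too far behind, I would use the coupling of Remark~\ref{CouplingMainThm} as a sandwich. Choosing initial data $\xi_0 \lesssim \eta_0$ with $\xi_0 = \eta_0$ restricted to agree on fertile sites (or simply starting all three from a single occupied site at the origin with the hostile environment $\delta_0^-$ for the lower processes), the domination $\xi_t \lesssim \eta_t \lesssim \zeta_t$ yields $r(\xi_t^c) \leq r(\eta_t^c) \leq r(\zeta_t^c)$ for all $t$. The upper bound against the classical contact process $\zeta$ (with parameter $\lambda p$) gives $\mu_{\mathrm{IS}} \leq \alpha(\lambda p)$ and, crucially, the tail control needed for integrability of the displacement increments; the lower bound against the Spont process gives $\mu_{\mathrm{IS}} \geq \mu_{\mathrm{SP}} > 0$, which simultaneously delivers positivity of the speed and shows survival of IS is inherited from survival of Spont on the relevant parameter range $p > \tilde{p}$.

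The main obstacle I anticipate is the lack of attractiveness of the IS process, flagged explicitly in the introduction: because increasing the fertile population can increase the sterile population, the usual monotone/subadditive arguments that make the classical proof of~\eqref{speedCP} work are unavailable. This means the renewal construction cannot rely on monotonicity to guarantee that renewal times recur or that increments genuinely regenerate; instead one must argue directly, via the detailed analysis of active infection paths, that the process restarts afresh after each renewal time \emph{regardless} of the configuration of blocked sites to the left. Verifying that the special property occurs infinitely often with the right integrability — i.e., that the inter-renewal times have finite (indeed exponentially controlled) tails even without attractiveness — is the delicate heart of the argument, and it is exactly the content that Sections~\ref{Sec_SpectialProperty}--\ref{Sec_Markov} are designed to supply.
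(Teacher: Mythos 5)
Your proposal follows essentially the same route as the paper: the paper also proves the Spont and IS speed theorems simultaneously via the renewal times $(\sigma_n)$ built from the special property, the i.i.d.\ increment structure of Proposition~\ref{iidSequence}, and a renewal-reward law of large numbers with sub-exponential tail control (Lemma~\ref{lem_timeSpaceRenewalBeforet}) to pass from renewal times to all times, yielding $\mu_{\mathrm{IS}}=\mu_S/\mu_T$. The only differences are minor: the paper's i.i.d.\ structure holds under $\bar{\mathbb{P}}_\chi^x=\mathbb{P}(\,\cdot\mid\sigma_0=0)$ rather than directly under conditioning on $\mathcal{S}_\eta(c)$ (the segment up to $\sigma_0$ is discarded separately at the start of the proof), and your coupling sandwich $\mu_{\mathrm{SP}}\leq\mu_{\mathrm{IS}}\leq\alpha(\lambda p)$ is a sensible way to supply the positivity of the speed, which the paper leaves implicit.
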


\begin{thm}[CLT for the rightmost particle for IS]\label{thm_CLT_IS}
    Let~$(\eta_t)_{t\geq 0}$ be an IS process with parameters~$\lambda>\lambda_c$ and~$p>\tilde{p}$ as in Remark~\ref{SupercriticalSpont}. Then, there exist~$\sigma_{\mathrm{IS}}\geq0$ such that for~$\mu_{\mathrm{IS}}\geq 0$ given as in~\eqref{eq_speed_IS}, for any initial configuration $c\in\mathcal C$, conditioned on survival, we have that

    \begin{equation}
        \frac{r(\eta_t^c) - \mu_{\mathrm{IS}} t}{t^{1/2}}\xrightarrow[t\rightarrow\infty]{\mathrm{(dist)}}\mathcal{N}(0,\sigma_{\mathrm{IS}}^2).
    \end{equation}
\end{thm}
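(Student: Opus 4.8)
The plan is to deduce the central limit theorem for the IS process from the
analogous result for the Spont process (Theorem~\ref{thm_Spont_CLT}) together with
the coupling of Remark~\ref{CouplingMainThm}, using the same renewal-time machinery
that underlies the Spont CLT. The key structural input, established earlier in the
paper (Sections~\ref{Sec_SpectialProperty}--\ref{Sec_Markov}), is a sequence of
renewal times $0=T_0<T_1<T_2<\cdots$ along which the increments
$\bigl(r(\eta^c_{T_{k+1}})-r(\eta^c_{T_k}),\,T_{k+1}-T_k\bigr)$ become i.i.d.\ (for
$k\geq 1$) under the probability measure conditioned on survival $\mathcal{S}_\eta(c)$.
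First I would recall the construction of these renewal times for the IS process and
record the joint law $(\Delta X,\Delta T)$ of a single generic increment, where
$\Delta X=r(\eta_{T_2})-r(\eta_{T_1})$ and $\Delta T=T_2-T_1$. Because the renewal
property makes the trajectory of $r(\eta^c_t)$ a cumulative sum of i.i.d.\ blocks,
the CLT will follow from the classical renewal-reward central limit theorem, exactly
as in the Spont case.

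The central analytic step is to control the tails of the block increments. I would
show that both $\Delta T$ and $|\Delta X|$ have finite second moments; in fact the
natural estimates give exponential tails,
\begin{equation}\label{eq_tail_IS}
  \mathbb{P}\bigl(\Delta T>s\bigr)\leq C e^{-\gamma s}
  \quad\text{and}\quad
  \mathbb{P}\bigl(|\Delta X|>s\bigr)\leq C e^{-\gamma s},
\end{equation}
for constants $C,\gamma>0$. The bound on $\Delta T$ comes from the positive
probability, uniformly in the past, that a renewal occurs within any fixed time
window; the bound on $|\Delta X|$ follows because the rightmost particle cannot move
faster than the Poisson birth clocks allow, so $|\Delta X|$ is dominated by a Poisson
increment over the random time $\Delta T$. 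With \eqref{eq_tail_IS} in hand, the
finiteness of the variance $\sigma_{\mathrm{IS}}^2$ is automatic, and the
renewal-reward CLT yields the Gaussian limit with
$\mu_{\mathrm{IS}}=\mathbb{E}[\Delta X]/\mathbb{E}[\Delta T]$ consistent with
\eqref{eq_speed_IS} and
$\sigma_{\mathrm{IS}}^2=\mathrm{Var}(\Delta X-\mu_{\mathrm{IS}}\Delta T)/\mathbb{E}[\Delta T]$.

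The reason the statement is posed with $\sigma_{\mathrm{IS}}\geq 0$ rather than the
strict $\sigma_{\mathrm{SP}}>0$ of the Spont theorem is that degeneracy of the limiting
variance is not a priori excluded for IS; I would therefore keep the argument valid in
the degenerate case, interpreting $\mathcal{N}(0,0)$ as the Dirac mass at $0$ (so that
the fluctuations are of smaller order than $t^{1/2}$). The independence from the
initial configuration $c\in\mathcal C$ is handled as in the Spont proof: after the
first renewal time $T_1$ the law of the subsequent dynamics no longer remembers $c$,
so only the first block depends on $c$, and a single block contributes nothing to the
$t^{1/2}$-scaled limit.

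The main obstacle I anticipate is transferring the renewal construction from the Spont
process to the IS process in a way that respects the lack of attractiveness of IS.
Whereas the Spont argument can exploit monotonicity to compare with the hostile
environment $\delta^-_x$ and with the contact process, the IS process is
non-attractive, so the coupling of Remark~\ref{CouplingMainThm} must be used carefully:
it pins $r(\eta_t)$ between $r(\xi_t)$ and $r(\zeta_t)$, and I expect the renewal times
themselves to be defined jointly with the Spont process so that an IS renewal is
triggered precisely when the dominating Spont configuration certifies that the active
infection path has separated from its past. Verifying that such jointly defined times
(i)~occur infinitely often on $\mathcal{S}_\eta(c)$, (ii)~produce genuinely i.i.d.\ IS
increments despite non-attractiveness, and (iii)~satisfy the tail bounds
\eqref{eq_tail_IS} is the delicate part; once this is secured, the passage from i.i.d.\
blocks to the CLT is standard.
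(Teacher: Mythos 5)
Your skeleton — renewal times with i.i.d.\ space-time increments, tail control of a generic block, a renewal-type CLT, discarding the first block to remove dependence on $c$, and allowing $\sigma_{\mathrm{IS}}=0$ — is indeed the paper's route (the paper proves Theorems~\ref{thm_Spont_CLT} and~\ref{thm_CLT_IS} by one and the same argument, decomposing $r_t-\mu t$ around the last renewal before $t$ and applying the classical CLT to the i.i.d.\ block sums). However, your opening claim, that the IS theorem can be \emph{deduced} from Theorem~\ref{thm_Spont_CLT} together with the coupling of Remark~\ref{CouplingMainThm}, is a non-starter: the coupling only pins $r(\eta_t)$ between $r(\xi_t)$ and $r(\zeta_t)$, and those two bounding fronts travel at different speeds in general, so the sandwich gives linear bounds but carries no information about fluctuations of order $t^{1/2}$ around $\mu_{\mathrm{IS}}t$. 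In the paper the coupling is used only inside probability estimates, never to transfer a distributional limit.

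The genuine gap is precisely the part you defer as ``the delicate part'': how to define renewal times for the non-attractive IS process so that the increments are truly i.i.d. The mechanism you sketch — an IS renewal ``triggered when the dominating Spont configuration certifies'' separation from the past — is not the paper's and is insufficient as stated. The paper's key idea is the special property of Definition~\ref{specialPropertySpont}: a renewal requires that for \emph{every} configuration $d\in\mathcal{C}^{x}$ agreeing with the current one only in the position of the rightmost $1$, the IS process restarted from $d$ survives \emph{and} its rightmost particle follows exactly the same trajectory forever. This uniformity over all of $\mathcal{C}^x$ is what substitutes for attractiveness: it yields the inference property (Lemma~\ref{InferenceProperty}), the Markov-type property (Proposition~\ref{MarkovProp}) and the i.i.d.\ structure (Proposition~\ref{iidSequence}), and it removes the hidden dependence of the increments on the landscape of blocked sites left behind the front, a dependence the paper explicitly warns would break the i.i.d.\ property at the end of Section~\ref{Sec_Markov}. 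Spont survival from $\delta_x^{-}$ does imply IS survival by coupling, but it says nothing about \emph{invariance} of the IS front under changes of the configuration behind it; establishing that requires the IS-specific discrepancy analysis of Lemmas~\ref{RegionTheyAgree} and~\ref{split_case_CPIS} (two IS processes in $\mathcal{C}^x$ can only split when the front jumps onto a site that has never seen a healing mark), after which Spont-based good events can control the situation. Relatedly, your claimed exponential tails in \eqref{eq_tail_IS} are stronger than what is available: for IS the paper only obtains stretched-exponential bounds $Ce^{-ct^{p}}$ (Remark~\ref{smallClusterIS}, Proposition~\ref{Prop_Time_Between_renCPIS}), and your justification of the $\Delta T$ tail via ``positive probability, uniformly in the past, of a renewal in any fixed window'' overlooks that renewal times are not stopping times — this is exactly why the paper needs the inference property and the conditioning argument of Corollary~\ref{CorSpontRenTimes}. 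Finite second moments still follow, so the Gaussian conclusion would be unaffected, but as written both the renewal construction and the tail estimates remain unproved.
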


\subsection{Graphical construction}\label{section_graph_constr}

\paragraph{} Since the argument we will use to prove the main theorems strongly rely on the graphical construction of those processes, we devote more than a few words for that. For a given choice of parameters $\lambda \in [0, \infty)$ and $p\in [0,1]$, we consider a graphical construction (also known as a Harris construction) $\mathcal{H}$ given by:

\begin{equation} \label{FormOfGraphicalConstruction}
    \mathcal{H} = \Big((N_1^{x,y})_{x\sim y}, (N_2^{x,y})_{x\sim y}, (U^x)\Big)
\end{equation}
where for $x,y\in \mathbb Z$ we have that $N_1^{x,y}$, $N_2^{x,y}$ and $U^x$ are Poisson point processes on $[0,\infty)$ with rates $\lambda p$, $\lambda (1-p)$ and $1$, respectively, and they are all independent from each other. 

Consider the space~$\mathbb{Z} \times [0, \infty)$, which can be thought of as attaching a time axis to each site of~$\mathbb{Z}$. At each arrival~$t\in N_1^{x,y}$, draw an arrow~$\overset{1}{\rightarrow}$ from~$(x,t)$ to~$(y,t)$; at each arrival~$t\in N_2^{x,y}$, draw an arrow~$\overset{-1}{\rightarrow}$ from~$(x,t)$ to~$(y,t)$ and a~$\triangle$ at $(y,t)$; finally, at each arrival~$t\in U^x$, draw a~$\times$ at the point~$(x,t)$. An illustration of such constriction is done in Figure~\ref{GCSPONTCPIS}.  

\begin{figure}[h]
    \centering
    \includegraphics[width=0.9\linewidth]{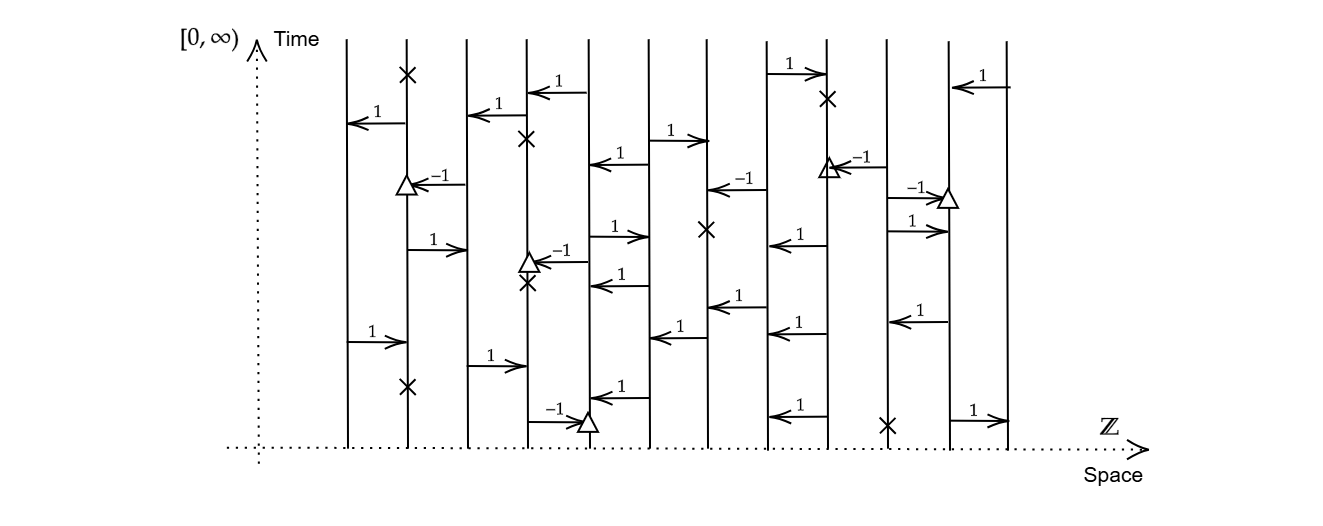}
    \caption{Illustration of graphical construction}
    \label{GCSPONTCPIS}
\end{figure}

The space-time marks left by  the processes $N_1^{x,y}$ and $N_2^{x,y}$ are called \textbf{transmission arrows} of type $1$ and type $-1$, respectively. The mark $\triangle$ where a transmission arrow of type $-1$ points towards is called \textbf{blocking mark}. Finally, a realisation of $U^x$ is referred as a \textbf{healing mark}. 

Starting from configurations $\xi_0,\eta_0\in\{-1,0,1\}^\Z$ and $\zeta_0\in\{0,1\}^\Z$, we construct a Spont process~$(\xi_t)_{t\geq 0}$, an IS process~$(\eta_t)_{t\geq 0}$ and a contact process~$(\zeta_t)_{t\geq 0}$ using the same collection~$\mathcal{H}$:
\begin{itemize}
    \item At each arrow $\overset{1}{\rightarrow}$ from~$(x,t)$ to~$(y,t)$, the three processes may be updated at $y$ according to a condition on the status of $x$ and $y$: if $x$ is in state 1 and $y$ in state 0, the birth of a fertile individual occurs in $y$, that is, its state flips to 1.
\item  At each arrow $\overset{-1}{\rightarrow}$ from~$(x,t)$ to~$(y,t)$, Spont and IS processes may be updated at $y$ according to a condition on the status of $x$ and/or $y$ (these arrows are ignored by contact process):  if $y$ is in state $0$, a blocked individual appears spontaneously in $y$ for the Spont process, that is, its state flips to -1; for IS process it requires moreover that $x$ is in state 1 and it can be interpreted like the birth of a sterile individual.
\item At each healing mark $(x,t)$, the three processes update the state of $x$ to $0$.
\end{itemize}

We call this construction of $(\xi_t,\eta_t,\zeta_t)$ the basic coupling started from $(\xi_0,\eta_0,\zeta_0)$. We denote by $\mathbb P,\mathbb E$ the associated probability and expectation. This probability space allows us to construct the three processes with any initial configuration simultaneously. This basic coupling satisfies the Remark~\ref{CouplingMainThm} (one can check that every transition preserves the order, it is done in detail in~\cite{velasco2024}). 

For~$s\in[0,\infty)$, we let~$\mathcal H[s,\infty)$ be the graphical construction on~$[s,\infty)$ obtained by the restriction of~$\mathcal H$ by~$s$ units of time in the future; i.e., ~$\mathcal H[s,\infty)$ is given by the collection~$\left((N_1^{x,y}\cap [s,\infty))_{x\sim y}, (N_2^{x,y}\cap [s,\infty))_{x\sim y}, (U^x\cap [s,\infty))\right)$ for when~$\mathcal H$ is given as in~\eqref{FormOfGraphicalConstruction}. Thus, we can define~$(\chi_{s,t}^d)_{t\geq s}$ the process started from~$d\in\{-1,0,1\}^{\mathbb{Z}}$ at time~$s$ constructed with~$\mathcal H[s,\infty)$.



\begin{definition}
    \textbf{(Active infection path)} Let~$(\chi_t)_{t\geq 0}$ denote either a Spont process or an inherited sterility process started from~$\chi_0$, constructed using a graphical construction~$\mathcal H$ as in\eqref{FormOfGraphicalConstruction}. Let~$I\subset [0,\infty)$ be a time interval. We say that $\Gamma:I\rightarrow \mathbb{Z}$ a càdlàg function is an active infection path for~$(\chi_t)_{t\geq 0}$ if $\Gamma$ satisfies the following three properties:

    \begin{itemize}
        \item \textbf{(P1)}: for all $s\in I$, $s\notin U^{\Gamma(s)}$ for any $x\in \mathbb{Z}$
        \item \textbf{(P2)}: if $\Gamma(s-)\neq \Gamma (s)$, then $\Gamma(s)\in N^{\Gamma(s-), \Gamma(s)}_1$ for all $s\in I$
        \item \textbf{(P3)}: $\chi_s(\Gamma(s))\neq -1$ for any $s\in I$ 
    \end{itemize}

     If there exists such $\Gamma:I\rightarrow \mathbb{Z}$ with $x=\gamma(s)$ and $y=\gamma(t)$ for $s\leq t$ where $[s,t]\subset I$, we write either~$(x,s)\Longrightarrow(y,t)$ or~$(x,s)\overset{\mathcal{H},\chi_0}{\Longrightarrow} (y,t)$ in case we want to highlight the graphical construction and the initial configuration used. 
\end{definition}

A natural but problematic approach to constructing the processes would be to declare a site~$x$ occupied at time~$t$ if and only if there exists an active infection path from an initially occupied site (at time~$0$) reaching~$x$ at time~$t$. However, this leads to a circular definition, since condition~(P3) depends on the process itself. Fortunately, the constructions provided in Section~\ref{appendix_construction} for $(\chi_t)_{t \geq 0}$ being a Spont or an inherited sterility process avoid this issue and prove the following result.

\begin{lem}\label{ActivePathIFF}
    Let $(\chi_t)_{t\geq 0}$ be a Spont or IS process started from $\chi_0$ constructed with a graphical construction $\mathcal{H}$ . Let $(x,t)\in\mathbb{Z}\times [0,\infty)$. Then, $\chi_t(x)=1$ if and only if there exists an active infection path $(y,0)\overset{\mathcal H, \chi_0}{\Longrightarrow} (x,t)$ for some $y\in\mathbb{Z}$ with $\chi_0(y)=1$. 
\end{lem}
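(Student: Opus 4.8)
The plan is to prove Lemma~\ref{ActivePathIFF} by induction on the finite sequence of relevant graphical events, reducing the apparent circularity of condition~\textbf{(P3)} to a well-founded recursion. The key observation is that on any compact space-time box $[-L,L]\times[0,T]$ the graphical construction $\mathcal H$ has only finitely many marks almost surely, so that there exists an a.s.\ finite increasing sequence of times $0=t_0<t_1<t_2<\cdots$ at which the configuration can change. Between consecutive marks the configuration is constant, and the claimed equivalence is preserved trivially; at each mark time $t_k$ only the endpoint site of the corresponding arrow (or the site of the healing mark) is updated. I would set up the induction hypothesis as the statement of the lemma restricted to all space-time points $(x,s)$ with $s\le t_k$, and then propagate it across the update at $t_{k+1}$.

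First I would treat the base case: at time $0$, $\chi_0(x)=1$ holds by definition exactly when the trivial length-zero path $(x,0)\Longrightarrow(x,0)$ exists and starts at an occupied site, so the equivalence holds. For the inductive step I would analyse the three types of update separately. If $t_{k+1}\in U^x$ is a healing mark, then $\chi_{t_{k+1}}(x)=0$; I must check that no active infection path can end at $(x,t_{k+1})$, which is exactly forbidden by property~\textbf{(P1)}, so both sides vanish and at every other site nothing changes. If $t_{k+1}\in N_1^{y,x}$ is a type-$1$ transmission arrow, the site $x$ flips to $1$ precisely when $x$ was empty and $y$ was occupied just before; using the induction hypothesis at $y$ to produce a path $(\cdot,0)\Longrightarrow(y,t_{k+1}-)$ and appending the jump across the arrow (legitimate by~\textbf{(P2)}), while noting $\chi_{t_{k+1}}(x)=1\neq-1$ keeps~\textbf{(P3)} satisfied, gives a path to $(x,t_{k+1})$; the converse direction reads the last jump of any path ending at $x$ off the arrow structure. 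Finally, a type-$(-1)$ arrow or a spontaneous blocking event can only set a site to $-1$ or leave the $1$-configuration unchanged, and by~\textbf{(P3)} such a site can never lie on an active path, so again both sides of the equivalence agree.

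The one genuinely delicate point, and the step I expect to be the main obstacle, is justifying that this finite-box induction actually \emph{defines} the process and dissolves the circularity flagged after the definition, rather than merely verifying a property of an already-constructed process. Concretely, condition~\textbf{(P3)} refers to $\chi_s$, so ``there exists an active infection path'' is not an a priori independent notion. I would resolve this by invoking the rigorous construction of Section~\ref{appendix_construction}, where $(\chi_t)_{t\ge 0}$ is built mark-by-mark as a deterministic functional of $\mathcal H$ and $\chi_0$; the lemma is then a statement comparing two descriptions of the \emph{same} constructed object, and the induction above shows they coincide at every mark time and hence at all times by right-continuity. The restriction to a finite box is harmless because an active infection path on $[0,t]$ stays within a bounded spatial interval (it moves only along transmission arrows, of which there are finitely many in the relevant box), so one can take $L$ large enough that no path reaching $(x,t)$ ever exits $[-L,L]$, and then let the box grow. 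I would close the argument by remarking that monotonicity in the box size makes the two one-sided implications stable under $L\to\infty$, completing the equivalence for arbitrary $(x,t)$.
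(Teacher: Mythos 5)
Your finite-box induction over the (a.s.\ finitely many) marks is correct, and it is essentially what the paper leaves implicit in the phrase ``by construction of the truncated process'': the base case and the three update cases (healing, type-$1$ arrow, blocking) are all handled properly. The genuine gap lies in the passage from the finite box to the infinite-volume process, and it occurs at exactly the two points you yourself flag as delicate. First, the process of Section~\ref{appendix_construction} is \emph{not} ``built mark-by-mark as a deterministic functional of $\mathcal H$ and $\chi_0$'': on all of $\mathbb{Z}\times[0,\infty)$ there is no first mark and no well-ordered enumeration of marks, so only the truncated processes $\chi^n$ admit such a construction, and the infinite-volume process is \emph{defined} as the limit $\chi_t(x)=\lim_{n\to\infty}\chi^n_t(x)$. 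Hence the lemma is not a comparison of two descriptions of one already-given object; before your induction says anything about $\chi$, you must know that $\chi^n$ agrees with $\chi$ (equivalently, that the $\chi^n$ eventually stabilize) on a compact space-time window.

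Second, your argument for that very point --- ``monotonicity in the box size makes the two one-sided implications stable under $L\to\infty$'' --- fails for the IS process: lack of attractiveness is the central difficulty of this paper, and enlarging the box (hence adding initially occupied sites) can create \emph{more} sterile sites and destroy active paths, so the box processes $\eta^n$ are not monotone in $n$. Even for Spont, where monotonicity does hold (Claim~\ref{ConstructionAttractiveClaim}), it yields only the inequality $\xi^n_t\lesssim\xi^m_t$, not the equality you need. This is why the paper proves the stabilization Claims~\ref{SpontUniformLimitBox} and~\ref{CPISActivePAthClaim}, whose proofs are the real content of the section: one must control how discrepancies created at the boundary of the box propagate inward (Spont), and for IS one needs a two-scale choice of boxes, because activity of a path (condition \textbf{(P3)}) is not a property of the path alone --- whether a site along the path is sterile depends on infection paths emanating from distant $1$'s that never come near $(x,t)$. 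For the same reason, your choice of $L$ (``no path reaching $(x,t)$ ever exits $[-L,L]$'') is not sufficient by itself; note that the paper's proof of Lemma~\ref{ActivePathIFF} enlarges the box twice ($N$, then $N^*=|x|+N$), and the proofs of the claims enlarge it again. Your proof becomes complete once the monotonicity step is replaced by an appeal to (or a proof of) these stabilization claims.
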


\subsection{Attractivity and lack of attractivity}

\paragraph{}Thanks to the previous construction, we have that the classical contact process and the Spont process are attractive, but not the contact process with inherited sterility. More precisely:

\begin{rk}
    For initial configurations~$c_1,c_2\in\{0,1\}^{\mathbb Z}$ and~$c_1',c_2'\in\{-1,0,1\}^{\mathbb Z}$, the processes heve been coupled in such way that
    \begin{align*}
        c_1\lesssim c_2 \Rightarrow \forall t \geq 0,~\zeta_t^{c_1}\lesssim \zeta_t^{c_2},\\
     c_1'\lesssim c_2' \Rightarrow \forall t \geq 0,~\xi_t^{c_1'}\lesssim \xi_t^{c_2'}.
        \end{align*}
\end{rk}
This property does not hold for the inherited sterility contact process. Moreover, the contact process is also additive, that is for all $t \geq 0$ we have that~$\zeta_t^{c_1\vee c_2}=\zeta_t^{c_1}\vee \zeta^{c_2}_t$ where~$c_1\vee c_2$ is the configuration defined as~$(c_1\vee c_2)(x)=\max\{c_1(x),c_2(x)\}$ . This property is not satisfied by either the Spont process or the IS process. 


Now that the main objects we will consider are well defined and the fundamental properties are stated, we take a moment to explain why certain standard techniques fail in our setting. A natural first attempt to prove a strong law of large numbers for the position of the rightmost~$1$ would be to apply the subadditive ergodic theorem, as is done for the classical contact process. In \cite{liggett1985interacting}, Liggett introduces a “filling-up” procedure at time~$m$ by occupying all sites to the left of the rightmost particle at that moment. However, because the IS process lacks attractiveness, this construction does not yield the required subadditive property. More generally, the absence of attractiveness creates an obstruction that persists regardless of how one attempts to modify the configuration at time~$m$: even if one devised a different procedure than “filling up,” the argument would break down at essentially the same point. Furthermore, in the case of attractive models, this technique only gives the existence of a speed (which could be 0) and provides no information about fluctuations. 

One could also attempt to adapt the approach of \cite{kuczek1989central}, where for the classical contact process one defines renewal times corresponding to moments when the process started from a single occupied site located at the position of the rightmost particle survives indefinitely. In that setting, renewal times have the crucial property that the rightmost particle at all future times is a descendant of the rightmost particle at the renewal time. This allows one to connect rightmost particles across renewals via infection paths, which in turn provides control over their asymptotic behaviour. For the Spont and IS processes, however, this strategy fails for a subtler reason related to the lack of additivity. Suppose we start from the configuration with only the rightmost site occupied, but alter the surrounding landscape by introducing blocking sites at every other location (or any similar modification that preserves the rightmost particle at its original position). In such cases, it may happen that the rightmost particle in the modified process does not coincide with the rightmost particle in the original process—or worse, that survival in the modified system does not imply survival in the original one. Thus, in contrast to the classical case, we must impose a stronger condition to ensure a workable renewal structure, as will be clarified in the next section.

\section{Special property and renewal times} \label{Sec_SpectialProperty}

\paragraph{} In this section, we examine a structural property exhibited by both the Spont process and the contact process with inherited sterility. In general terms, this property reflects a kind of consistency over time: given two times~$t_1, t_2 \in [0, \infty)$ with~$t_1 \leq t_2$, if the property holds throughout the interval ~$[t_1, t_2]$ and continues to hold from time~$t_2$ onward, then we can \textit{infer} that it must also hold from time ~$t_1$ onward. We will refer to this, informally, as an \textit{inference property}. This is a key observation that will be used to show some sort of Markov property for the renewal times and thus get the i.i.d. structure.

Going back to the classical contact process, in studying the behaviour of the rightmost particle as in \cite{kuczek1989central}, one considers a sequence of times at which the process—if restarted from the rightmost occupied site alone—would still survive. In effect, this amounts to evaluating the classical contact process under its most hostile conditions: all individuals to the left of the rightmost site are removed, and the question becomes whether the process can survive from this most unfavourable configuration. In that case, once the rightmost particle survives, it dictates the behaviour of all future rightmost individuals, in the sense that they must all be its descendants by a crossing paths argument.

Here, the strategy will be similar, with an important modification that we also ask for some regularity regarding the position of the rightmost particle across a class of configurations. First, we define an event that indicates whether a special property has been verified within a given time gap. This special property is determined by \textit{both} the graphical construction and the configuration of the process at the beginning of the time interval under consideration (which itself depends on the initial configuration). 

In the following, $(\chi_t)_{t \geq 0}$ will designate any process, and we will specify when the results depend on specific processes and their parameters. In order to simplify notation, we will often write~$\chi_\cdot$ in place of the full process~$(\chi_t)_{t \geq 0}$.

\subsection{Special property}

\begin{definition} \label{specialPropertySpont}
    Let $x\in \mathbb{Z}$,~$c\in \mathcal{C}^x$ and~$t_1,t_2\in[0,\infty)$ with~$t_1<t_2$. For a process $(\chi_t^c)_{t\geq 0}$, we define the following event:
\begin{align}
\mathcal{P}(\chi_\cdot,c,[t_1,t_2]) 
= &\;\Big\{ \exists x \in \mathbb{Z} : \chi_{t_1}^c(x) = 1 \Big\} 
\;\bigcap \\ &\Bigg\{ 
\forall d \in \mathcal{C}^{r(\chi^c_{t_1})}, \; 
    \exists y \in \mathbb{Z} \text{ such that } \chi_{t_1,t_2}^{d}(y) = 1 
    \label{FirstPartProperty} \\
&\forall s \in [t_1, t_2], \; 
    r(\chi_s^c) = r(\chi_{t_1,s}^d) 
    \label{SecondPartProperty}
\Bigg\}.
\end{align}
Finally, for a given $t\geq 0$, let $\mathcal{P}(\chi_\cdot,c,[t,\infty))=\cap_{s\geq 0 }\mathcal{P}(\chi_\cdot,c,[t,t+s])$.
\end{definition}
We will sometimes forget the $\chi_\cdot$ and $c$ in the property $\mathcal{P}$ when the context is clear.

An illustration of the event that is being captured by Definition \ref{specialPropertySpont} is made in Figure \ref{FIGspecialPropertySpont}. We refer the first part of the event in line \eqref{FirstPartProperty} as the \textbf{survival property}, whereas the other part of that event in line \eqref{SecondPartProperty} is called the \textbf{invariance property}. In words, they are the following: 

\begin{itemize}
     \item \textbf{Survival Property:} Regardless of how we change the configuration around the rightmost particle at moment $t_1$ by any other configuration with the same position of the rightmost particle, the modified process survives until moment $t_2$.
     
     \item \textbf{Invariance Property:} Again, even if we change the configuration by any other configuration with the same rightmost particle, the position of rightmost particle of all those processes would always agree on the time interval~$[t_1,t_2]$ with the position of the rightmost particle for the original process~$(\chi_t^c)$. Therefore, the trajectory of the rightmost site is somehow invariant under those changes in the configuration.     
\end{itemize}

\begin{figure}[h]
        \centering
        \includegraphics[width=0.9\linewidth]{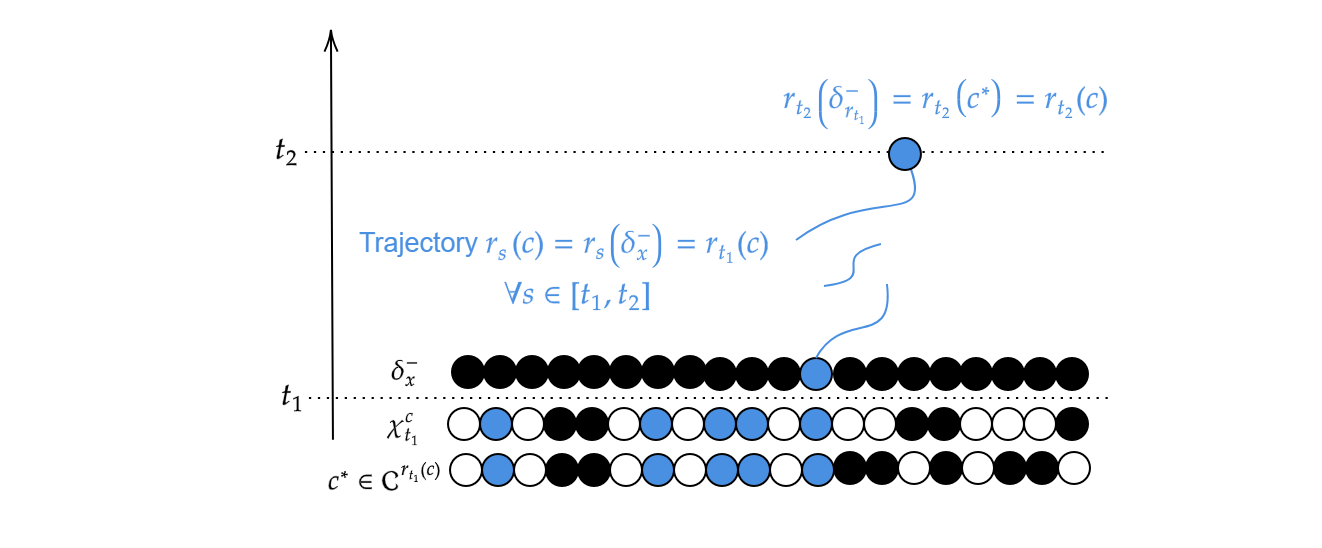}
        \caption{Event $\mathcal{P}(\chi_\cdot,c,[t_1,t_2])$ as in Definition \ref{specialPropertySpont} in case it is not empty}
        \label{FIGspecialPropertySpont}
\end{figure}

Throughout the rest of this work, given~$t\in[0,\infty)$, we will often write~$r_t(c)$ instead of~$r(\chi_t^c)$ to refer to the rightmost~$1$ of the process~$(\chi_t^c)_{t\geq 0}$.  We will also write~$r_{s,t}(d)$ instead of~$r(\chi_{s,t}^d)$ when we want to refer to the rightmost~$1$ of the process~$(\chi_{s,t}^d)_{t\geq s}$. 

\begin{rk}   We make a few remarks regarding the event in Definition \ref{specialPropertySpont}:

    \begin{enumerate}
        \item For $t_1<t_2<t_3$, we have $\mathcal{P}(\chi_\cdot,c,[t_1,t_3])\subseteq \mathcal{P}(\chi_\cdot,c,[t_1,t_2])$ but $\mathcal{P}(\chi_\cdot,c,[t_2,t_3])\nsubseteq \mathcal{P}(\chi_\cdot,c,[t_1,t_3])$.
 
        \item For Spont model (which is attractive), the survival property can be reformulated more simply as $\{\exists y \in \mathbb{Z} \text{ such that } \xi_{t_2}^{\delta_{r_{t_1}(c)}^-}(y) = 1\}$.
        
        \item If the survival property holds for Spont model, then, by coupling, it holds for IS model. 
      
        \item If the invariance property holds then for any $d,d'\in\mathcal{C}^{r_{t_1}(c)}$ we have that
        \[
        \forall s\in [t_1,t_2],~r_{t_1,s}({d}) = r_{t_1,s}({d'})=r_{t_1,s}(\delta_{r_{t_1(c)}}^-)=r_s(c).\] 
    \end{enumerate}
\end{rk}

\begin{lem} \label{InferenceProperty}
    For $\mathcal{P}$ as in Definition \ref{specialPropertySpont}, the following is true:

    \begin{equation*}
        \mathcal{P}(\chi_\cdot,c,[t_1,t_2])\cap \mathcal{P}(\chi_\cdot,c,[t_2,\infty))\subset \mathcal{P}(\chi_\cdot,c,[t_1,\infty))
    \end{equation*}
\end{lem}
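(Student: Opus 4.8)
The plan is to unfold the definition $\mathcal{P}(\chi_\cdot,c,[t_1,\infty))=\bigcap_{s\geq 0}\mathcal{P}(\chi_\cdot,c,[t_1,t_1+s])$ and verify the survival and invariance clauses on each window $[t_1,T]$, splitting into the regimes $T\leq t_2$ and $T>t_2$. The first regime is immediate from the nesting relation recalled in the remark: since $\mathcal{P}(\chi_\cdot,c,[t_1,t_2])\subseteq\mathcal{P}(\chi_\cdot,c,[t_1,T])$ whenever $t_1<T\leq t_2$, the first hypothesis already delivers $\mathcal{P}(\chi_\cdot,c,[t_1,T])$. All the work is in the regime $T>t_2$.

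So I would fix $T>t_2$ and an arbitrary test configuration $d\in\mathcal{C}^{r(\chi_{t_1}^c)}$. From $\mathcal{P}(\chi_\cdot,c,[t_1,t_2])$ I extract, first, survival at $t_1$ (which is exactly the survival clause needed for the wider window $[t_1,T]$, since that clause depends only on $t_1$), and second, the invariance identity $r(\chi_{t_1,s}^d)=r(\chi_s^c)$ for all $s\in[t_1,t_2]$. Evaluating at the single endpoint $s=t_2$ yields the key structural fact that the restarted configuration $e:=\chi_{t_1,t_2}^d$ has its rightmost $1$ exactly at $r(\chi_{t_2}^c)$, i.e. $e\in\mathcal{C}^{r(\chi_{t_2}^c)}$. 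This is precisely what lets the second hypothesis, which quantifies over all configurations in $\mathcal{C}^{r(\chi_{t_2}^c)}$, apply to the process we have been tracking.

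The bridge between the two windows is the flow (Markov) property of the graphical construction: running from $d$ at time $t_1$ up to a time $s\geq t_2$ coincides with first running from $d$ to time $t_2$ and then restarting from $e=\chi_{t_1,t_2}^d$ at time $t_2$ under $\mathcal{H}[t_2,\infty)$; symbolically $\chi_{t_1,s}^d=\chi_{t_2,s}^{e}$ for all $s\geq t_2$. With this identity I apply $\mathcal{P}(\chi_\cdot,c,[t_2,T])$ — one of the events intersected in $\mathcal{P}(\chi_\cdot,c,[t_2,\infty))$, obtained by taking $s=T-t_2$ — to the admissible configuration $e$. Its survival clause produces a $y$ with $\chi_{t_2,T}^{e}(y)=\chi_{t_1,T}^d(y)=1$, giving survival on $[t_1,T]$; its invariance clause gives $r(\chi_{t_1,s}^d)=r(\chi_{t_2,s}^{e})=r(\chi_s^c)$ for all $s\in[t_2,T]$. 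Concatenating with the invariance on $[t_1,t_2]$ from the first hypothesis yields $r(\chi_{t_1,s}^d)=r(\chi_s^c)$ for all $s\in[t_1,T]$, the two pieces agreeing at the overlap $s=t_2$. Since $d$ was arbitrary in $\mathcal{C}^{r(\chi_{t_1}^c)}$, both clauses of $\mathcal{P}(\chi_\cdot,c,[t_1,T])$ hold, and letting $T$ range over $[t_2,\infty)$ completes the intersection.

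The only genuinely delicate point is the identification $e\in\mathcal{C}^{r(\chi_{t_2}^c)}$, which hinges on the invariance clause at the single time $s=t_2$; without it the second hypothesis could not be invoked for the configuration we actually produce. I expect the flow property of the graphical construction to be routine given the construction carried out in Section~\ref{appendix_construction}, so the main obstacle is really just the careful bookkeeping of which configuration is admissible for which window.
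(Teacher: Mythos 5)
Your proposal is correct and follows essentially the same route as the paper's proof: fix an arbitrary $d\in\mathcal{C}^{r_{t_1}(c)}$, use the flow property $\chi_{t_1,s}^{d}=\chi_{t_2,s}^{d^*}$ with $d^*=\chi_{t_1,t_2}^{d}$, identify $d^*\in\mathcal{C}^{r_{t_2}(c)}$ via the invariance clause of $\mathcal{P}([t_1,t_2])$ at $s=t_2$, and then invoke the survival and invariance clauses of $\mathcal{P}([t_2,\infty))$ applied to $d^*$. The point you flag as delicate — that the invariance at the single time $t_2$ is what makes the restarted configuration admissible for the second hypothesis — is exactly the pivot of the paper's argument as well.
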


\begin{proof} Let~$d\in  \mathcal{C}^{r_{t_1}(c)}$. 
    We first verify how the survival property carries over. Thanks to~$\mathcal{P}([t_1,t_2])$ we know that there exists~$y\in \Z$ such that~$\chi_{t_1,s}^{d}(y) = 1 $ for every~$s\in[t_1,t_2]$. For~$s>t_2$, we have that~$\chi_{t_1,s}^{d}=\chi_{t_2,s}^{d^*}$ for all~$s\geq t_2$ with~$d^*=\chi_{t_1,t_2}^{d}$. Thanks to~$\mathcal{P}([t_1,t_2])$ again, we have that~$r(d^*)=r_{t_2}(c)$ so~$d^*\in\mathcal{C}^{r_{t_2}(c)}$ and applying the survival property of~$\mathcal{P}([t_2,\infty))$ we obtain that there exists~$y\in \Z$ such that $\chi_{t_2,s}^{d^*}(y) =1=\chi_{t_1,s}^{d}(y)$.
    
    We now move on to the invariance property.  We aim to prove that ${r}_{t_1,s}(d)={r}_{s}(c)$ for every $s\geq t_1$. It is again obvious for $s\in[t_1,t_2]$. For $s>t_2$, as we previously wrote, $r_{t_1,s}(d)=r_{t_2,s}(d^*)$ with $d^*\in\mathcal{C}^{r_{t_2}(c)}$, so, applying the invariance property of $\mathcal{P}([t_2,\infty))$ we obtain that $r_{t_1,s}(d)=r_{t_2,s}(d^*)=r_{s}(c)$.   

\end{proof}

As a preview of our approach, we are looking for am moment when the special property we defined is satisfied forever. To find it, we introduce the following stopping time which is a central object in our analysis: it allows us to interrupt the process when the special property fails to hold.

\begin{definition} \label{defFSpont}
    Let $x\in \mathbb{Z}$ and let $c\in \mathcal{C}^x$. We consider the following stopping time:
\begin{align} \label{DefOfF}
        F(\chi_\cdot,c) 
        &= \inf \{s\geq 0\,:\,\mathcal P (\chi_\cdot,c,[0,s])\text{ is not satisfied}\} \\
       &= \inf \{s\geq 0\,:\,\exists d \in \mathcal{C}^x, \forall y\in \mathbb{Z}, \chi_s^{d}(y)\neq 1\, \text{ or } r_s({d}) \neq r_s({c})\} 
    \end{align}

\end{definition}

\begin{lem} \label{cDoesNotMatterFInfty}
    Once $x\in \mathbb{Z}$ is fixed, the occurrence of the event $\{F(\chi_\cdot,c)=\infty\}$ does not depend on the initial configuration $c\in\mathcal C ^x$ (so we can write $\{F(\chi_\cdot,x)=\infty\}$).
\end{lem}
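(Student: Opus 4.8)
The plan is to unwind the definition of $\{F(\chi_\cdot,c)=\infty\}$ and to observe that, because $c$ itself lies in $\mathcal C^x$, this event splits into two pieces, neither of which singles out $c$ among the configurations of $\mathcal C^x$.

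First I would record that, by Definition~\ref{defFSpont} and the definition of $\mathcal P(\chi_\cdot,c,[t,\infty))$,
\[
\{F(\chi_\cdot,c)=\infty\}=\bigcap_{s\geq 0}\mathcal P(\chi_\cdot,c,[0,s])=\mathcal P(\chi_\cdot,c,[0,\infty)).
\]
Since $c\in\mathcal C^x$ we have $r(\chi_0^c)=r(c)=x$, so the configurations $d$ quantified over in $\mathcal P$ range precisely over $\mathcal C^x$, independently of which $c\in\mathcal C^x$ we started from; moreover $\chi_{0,\cdot}^d=\chi_\cdot^d$. Unwinding Definition~\ref{specialPropertySpont}, the event above is the intersection of the \emph{survival part}
\[
A=\Big\{\forall s\geq 0,\ \forall d\in\mathcal C^x,\ \exists y\in\Z:\ \chi_s^d(y)=1\Big\}
\]
(together with the trivially satisfied clause that $c$ has an occupied site at time $0$) and the \emph{invariance part}
\[
B_c=\Big\{\forall s\geq 0,\ \forall d\in\mathcal C^x:\ r(\chi_s^d)=r(\chi_s^c)\Big\}.
\]

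The event $A$ makes no reference to $c$: it only asserts that every process launched from a configuration in $\mathcal C^x$ survives for all time, a statement about $\mathcal H$ and the class $\mathcal C^x$ alone. It remains to see that $B_c$ is likewise independent of $c$, and here the key observation is simply that $c$ is itself one of the configurations over which $B_c$ quantifies. Setting
\[
B=\Big\{\forall s\geq 0,\ \forall d,d'\in\mathcal C^x:\ r(\chi_s^d)=r(\chi_s^{d'})\Big\},
\]
I would argue by transitivity: on $B_c$ any two $d,d'\in\mathcal C^x$ satisfy $r(\chi_s^d)=r(\chi_s^c)=r(\chi_s^{d'})$ for every $s$, so $B_c\subseteq B$; conversely, on $B$, taking $d'=c$ (legitimate since $c\in\mathcal C^x$) yields $r(\chi_s^d)=r(\chi_s^c)$ for all $d$ and all $s$, so $B\subseteq B_c$. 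Hence $B_c=B$ for every $c\in\mathcal C^x$, and $B$ plainly does not mention $c$.

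Combining, $\{F(\chi_\cdot,c)=\infty\}=A\cap B$ for every $c\in\mathcal C^x$, an event independent of the choice of $c$; this is exactly the claim and justifies the notation $\{F(\chi_\cdot,x)=\infty\}$. There is no genuine analytic difficulty here — the statement is a matter of carefully manipulating the quantifiers. The only point that requires attention is that the invariance clause of $\mathcal P$ is phrased relative to the distinguished trajectory $r(\chi_\cdot^c)$; the whole argument rests on the symmetry that makes this reference interchangeable, namely that $c$ is not distinguished from the other elements of $\mathcal C^x$ once we are at time $t_1=0$, where $r(\chi_0^c)=x$ for all of them.
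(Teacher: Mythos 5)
Your proposal is correct and follows essentially the same route as the paper's proof: both split $\{F(\chi_\cdot,c)=\infty\}=\mathcal P(\chi_\cdot,c,[0,\infty))$ into the survival clause, which only references $\mathcal C^x$, and the invariance clause, which is handled by transitivity of equality of rightmost positions using the fact that $c$ itself belongs to $\mathcal C^x$. Your rewriting of the invariance clause as the symmetric, $c$-free event $B$ is just a slightly more formal packaging of the paper's "$r_s(c')=r_s(c)$ for all $s$, hence the invariance property holds for $c'$" argument.
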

\begin{proof}
    Let $c\in\mathcal{C}^x$ such that $F(\chi_\cdot,c)=\infty$. Let us prove that for $c'\in \mathcal{C}^x$, we also have $F(\chi_\cdot,c')=\infty$. It is clear how the survival property transfers from one process to another because it only depends on $x$; as for the inference property, thanks to $\mathcal{P}(\chi_\cdot,c,[0,\infty))$, we know that $r_s(c')=r_s(c)$ for any s, so the invariance property holds also for $c'$. 
\end{proof}

\begin{rk}
    We have just concluded that~$\mathbb P(F(\chi_\cdot,c) =\infty)$ is the same across all the initial configurations~$c\in\mathcal{C}^x$. Note also that the choice of~$x\in \mathbb{Z}$ also does not matter for the value of~$\mathbb P(F(\chi_\cdot,x) =\infty)$ due to translation invariance of the graphical construction. 
\end{rk}

We are now ready to present our main results, namely that the special property has a positive probability for the processes we are interested in, and if this property fails, it does so quickly.

\begin{prop} \label{FPossibleAndSmallCluster}
    Let $x\in \mathbb{Z}$ and $c\in \mathcal{C}^x$. Consider~$(\xi^c_t)_{t\geq0}$ a Spont process with parameters~$\lambda>\lambda_c$ and~$p>\tilde{p}$ as in Remark~\ref{SupercriticalSpont}. The following is true:
    \begin{enumerate}
        \item $\mathbb{P}(F(\xi_\cdot,c)=\infty)>0$ uniformly in~$c\in\mathcal C^x$,
        \item For any~$t_0\in[0,\infty)$, it follows that $\mathbb{P}(t_0<F(\xi_\cdot,c)<\infty)\leq Ae^{-B~t_0}$ for some choice of constants $A,B>0$ that do not depend on the initial configuration $c\in\mathcal C^x$. 
    \end{enumerate}
    Moreover, the same results hold if we replace~$\mathbb P$ by the measure conditioned on survival ${\mathbb P}(\cdot\,\mid\,{\mathcal S_\xi(c))}$.
\end{prop}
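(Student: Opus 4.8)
The plan is to reduce the whole statement, via attractivity, to the joint behaviour of only two coupled Spont processes, and then to run a renormalisation-plus-restart argument. Fix $x\in\mathbb Z$, write $\delta_x^-$ for the most hostile configuration and let $\overline c_x\in\mathcal C^x$ be the maximal one, i.e. $\overline c_x(y)=1$ for $y\le x$ and $\overline c_x(y)=0$ for $y>x$. For every $c\in\mathcal C^x$ one has $\delta_x^-\lesssim c\lesssim \overline c_x$, so by the basic coupling $r_s(\delta_x^-)\le r_s(c)\le r_s(\overline c_x)$ for all $s$. Consequently the survival part of $\mathcal P$ reduces, as in the Remark, to survival of $\xi^{\delta_x^-}$, while the invariance part, by this sandwich, holds at time $s$ if and only if $r_s(\delta_x^-)=r_s(\overline c_x)$. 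In particular $F(\xi_\cdot,c)$ equals the first time at which either $\xi^{\delta_x^-}$ has no occupied site or $r_s(\delta_x^-)<r_s(\overline c_x)$, which does not depend on $c$; this already yields the uniformity in $c\in\mathcal C^x$ in both assertions, consistently with Lemma~\ref{cDoesNotMatterFInfty}.

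The key coupling observation I would exploit is that the two extreme processes synchronise ahead of the front. To the right of $x$ the only difference between $\delta_x^-$ and $\overline c_x$ is $-1$ versus $0$; after the first healing mark $U^z$ at a site $z>x$ both processes read $0$ there, and as long as no infection arrow reaches $z$ (i.e. the front has not yet arrived) they are driven at $z$ by the same spontaneous-blocking marks and healing marks, hence agree. A discrepancy can therefore only be created at the front itself, either when the faster process $\overline c_x$ infects a site that in $\delta_x^-$ is still an un-refreshed $-1$, or when an infection path issued from the extra $1$'s of $\overline c_x$ to the left of $x$ overtakes the hostile front. The special property holds for all time precisely on the event that $\xi^{\delta_x^-}$ survives, its front is leading, and no such overtaking ever occurs.

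To produce such an event with positive probability I would use the block comparison with supercritical oriented percolation available for $\lambda>\lambda_c,\ p>\tilde p$ (the same comparison behind Remark~\ref{SupercriticalSpont}). I define a good block to be a finite space-time box in which the Bezuidenhout--Grimmett restart event occurs and, in addition, every site ahead of the hostile front is refreshed by a healing mark before the front reaches it, so that inside the box $\xi^{\delta_x^-}$ and $\xi^{\overline c_x}$ agree to the right of $r_s(\delta_x^-)$ and the latter is handed a sealed configuration for the next box. A good block has probability close to one when the box is large, so the field of good blocks dominates a supercritical oriented percolation; on the positive-probability event that all blocks along the rightmost open path are good, survival, the leading property and the sealing persist forever, giving $\mathbb P(F(\xi_\cdot,c)=\infty)>0$. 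For the exponential tail I would run a restart argument: on $\{F>t_0\}$ the configuration at time $t_0$ is good (survived so far, fronts agreeing), and from any good configuration the probability that a fresh good block forms and the property then holds forever is bounded below by some $q'>0$ depending only on the graphical construction in a window of fixed length $L$. Since $\{t_0<F<\infty\}$ forces the independent trials attached to the disjoint windows $[kL,(k+1)L]$, $k<t_0/L$, to all fail, one gets $\mathbb P(t_0<F<\infty)\le(1-q')^{\lfloor t_0/L\rfloor}=Ae^{-Bt_0}$ with $A,B$ independent of $c$. The conditioned statements are then immediate: $\{F=\infty\}\subseteq\mathcal S_\xi(c)$ and $\mathbb P(\mathcal S_\xi(c))>0$ by Remark~\ref{SupercriticalSpont}, so dividing both bounds by $\mathbb P(\mathcal S_\xi(c))$ gives the two assertions under $\mathbb P(\cdot\mid\mathcal S_\xi(c))$ with adjusted constants.

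The main obstacle is the construction of the good block event, and more precisely controlling the front of the maximal process $\xi^{\overline c_x}$. Because the Spont process is attractive but \emph{not} additive, I cannot decompose $\xi^{\overline c_x}$ as a superposition of single-site processes, so ruling out that its extra $1$'s to the left of $x$, or its empty rather than blocked sites to the right of $x$, push its front strictly ahead of the hostile front must be carried out directly at the level of active infection paths, via a crossing/merging argument combined with finite speed of propagation of discrepancies and the renormalisation estimates. Making the sealing quantitative inside a single block, uniformly in the boundary data inherited from the previous block, is the technical heart of the proof.
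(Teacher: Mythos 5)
Your reduction via attractivity to the two extreme processes $\xi^{\delta_x^-}$ and $\xi^{\overline c_x}$ is correct and matches the paper's starting point (cf.\ Lemma~\ref{cDoesNotMatterFInfty} and the discussion opening Section~\ref{Sec_Prop}), and your observation that discrepancies can only be created at the front on a never-healed site is essentially Lemma~\ref{split_site_never_visited}. However, both quantitative parts of your argument have genuine gaps. The clearest one is in your proof of the exponential tail: you claim that $\{t_0<F<\infty\}$ forces ``independent trials attached to the disjoint windows $[kL,(k+1)L]$'' to all fail, and conclude $(1-q')^{\lfloor t_0/L\rfloor}$. But the success event of your trial at window $k$ is ``a good block forms \emph{and the property then holds forever}'', and the second clause depends on the entire future of the graphical construction after time $kL$, not on the window $[kL,(k+1)L]$. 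The failure events for different $k$ therefore share all the randomness after time $t_0$ and are very far from independent (indeed, a single bad event late in time can make \emph{every} trial fail simultaneously); no product bound follows, and the iterated-conditioning rescue that works elsewhere in the paper (Corollary~\ref{CorSpontRenTimes}) is unavailable here because your trial failures are not measurable with respect to any increasing filtration you can condition along. The paper avoids restarts entirely for this part: it shows the inclusion $\mathcal{S}_{\xi}(c)\cap \mathcal{P}([0,t_0])\cap H(t_0)\cap R(t_0)\subset \{F=\infty\}$, so that $\{t_0<F<\infty\}$ forces one of three exponentially rare events — late but finite extinction (controlled by the Garet--Marchand small-cluster bound, Remark~\ref{smallClusterSPON}), failure of the healed region to grow at rate $4\alpha$ (Lemma~\ref{sitesWellBehavedHealingMark}), or the dominating classical contact process front exceeding speed $3\alpha$ after $t_0$ (Lemma~\ref{rightmostBehavedClassicalCP}) — and concludes by a union bound.

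For the positive-probability statement, your Bezuidenhout--Grimmett block construction with ``sealing'' is not wrong in spirit, but you yourself flag the crucial step — making the sealing quantitative inside a block, uniformly in the inherited boundary data — as unresolved, and that step is precisely where the difficulty lives; as written, part (1) is a plan rather than a proof. The paper replaces all of this renormalisation machinery with a single global speed race, which is both more elementary and complete: the front of any $\xi^c$, $c\in\mathcal C^x$, is dominated by a classical contact process started from a Heaviside configuration, whose running maximum stays below $x+L_1+3\alpha t$ with high probability, while the first never-healed site to the right of $x+2L_2$ stays above $x+L_2+4\alpha t$ with high probability; since $4\alpha>3\alpha$, on the intersection of these events (together with an initial event $I$ freezing the window $[0,1]$ and the survival of $\xi^{\delta_x^-}$) the front can never attempt a jump onto a never-healed site, so by Lemma~\ref{split_site_never_visited} the invariance property holds forever. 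Positivity of the intersection is then obtained by exploiting that $I$, $H$, $R$ and the survival event depend on (conditionally) disjoint parts of the graphical construction — a factorisation your block scheme would still need to reprove. If you want to salvage your window-trial idea for part (2), you must redefine a trial so that its success is a purely local event in the window which, \emph{combined with global good events}, implies $F=\infty$; at that point you will have reconstructed the paper's decomposition.
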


\begin{prop} \label{PropertiesFailureTimeCPIS}
    Let $x\in\mathbb{Z}$ and $c\in \mathcal{C}^x$. Consider~$(\eta_t^c)_{t\geq 0}$ an IS process with parameters~$\lambda>\lambda_c$ and~$p>\tilde{p}$ as in Remark~\ref{SupercriticalSpont}. Then the following is true:
\begin{enumerate}
        \item $\mathbb{P}(F(\eta_\cdot,c)=\infty)>0$  uniformly in $c\in \mathcal C^x$,
        \item for any~$t_0\in[0,\infty)$, it follows that~$\mathbb{P}(t_0<F(\eta_\cdot,c)<\infty)\leq A'e^{-B' t_0^p}$ for some choice of constants $A',B',p>0$ that do not depend on the initial configuration $c\in\mathcal C^x$.
    \end{enumerate}
Moreover, the same results hold if we replace~$\mathbb P$ by the  measure conditioned on survival ${\mathbb P}(\cdot\mid \mathcal{S}_\eta(c))$.
\end{prop}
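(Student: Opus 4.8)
The plan is to adapt the machinery already built for the Spont process in Proposition~\ref{FPossibleAndSmallCluster} to the IS setting, importing what can be imported through the basic coupling of Remark~\ref{CouplingMainThm} and developing a genuinely new argument only where non-attractivity forces it. I would split the analysis along the two halves of the special property of Definition~\ref{specialPropertySpont}: the survival part \eqref{FirstPartProperty} and the invariance part \eqref{SecondPartProperty}. These interact with the coupling very differently, and keeping them apart is what makes the proof tractable.

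The survival half transfers essentially for free. Writing $r:=r(\eta^c_{t_1})$ for the IS front, every $d\in\mathcal{C}^{r}$ satisfies $\delta_r^-\lesssim d$, so coupling a Spont process started from the hostile environment $\delta_r^-$ with the IS process started from $d$ along $\mathcal H[t_1,\infty)$ yields $\xi^{\delta_r^-}_{t_1,s}\lesssim \eta^{d}_{t_1,s}$ for all $s\ge t_1$ by Remark~\ref{CouplingMainThm}. Hence survival of the Spont process from $\delta_r^-$ forces a fertile IS individual to be present at every time $s$, \emph{simultaneously} over all $d$ — this is exactly the monotonicity recorded in the remarks following Definition~\ref{specialPropertySpont}. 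By translation invariance of the graphical construction, the Spont survival estimate applies at the (random) IS front position, so the uniform positive lower bound and the exponential failure tail of Proposition~\ref{FPossibleAndSmallCluster} carry over to the survival half.

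The invariance half is where the real work lies. Because the IS dynamics is not attractive, I cannot collapse the requirement $r_{t_1,s}(d)=r_{t_1,s}(d')$ for all $d,d'\in\mathcal{C}^{r}$ to a single worst-case configuration, as is possible for the attractive Spont process via comparison with $\delta_r^-$. Two IS evolutions that agree only in having rightmost $1$ at $r$ can seed different sterile ($-1$) offspring near the front, and these block or fail to block the advance of the rightmost $1$ in a configuration-dependent way. The approach I would take is to analyse the front through active infection paths (Lemma~\ref{ActivePathIFF}): trace back from the particle realising $r_{t_1,s}(d)$ and show that, with high probability, this backward path returns to the common initial fertile site at $r$ while never sampling the sites at which $d$ and $d'$ disagree, so that the whole front trajectory is determined by $r$ and the future graphical construction alone. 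I would encode the obstruction as a \emph{front-interference} event $B_{t_0}$, on which some active infection path issued from strictly behind the front at time $t_1$ catches up to and modifies the rightmost particle at a time exceeding $t_0$; off $B_{t_0}$ the invariance of the IS front is inherited from the Spont invariance controlled by Proposition~\ref{FPossibleAndSmallCluster}.

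The two pieces are then assembled using the inference property (Lemma~\ref{InferenceProperty}), which upgrades control on finite windows to control on $[t_1,\infty)$. Part~1 follows by intersecting the good Spont event with the event that no front-interference ever occurs and checking that this intersection has positive probability, uniformly in $c\in\mathcal C^x$; part~2 follows from a decomposition of the shape
\[
\{t_0<F(\eta_\cdot,c)<\infty\}\ \subseteq\ \{t_0<F(\xi_\cdot,c)<\infty\}\ \cup\ B_{t_0},
\]
so that the tail of $F(\eta_\cdot,c)$ is the worse of the Spont exponential tail and $\mathbb P(B_{t_0})$. The main obstacle — and the reason the rate degrades from the clean $e^{-Bt_0}$ of the Spont case to the stretched form $e^{-B't_0^{p}}$ — is precisely the bound on $\mathbb P(B_{t_0})$: ruling out interference requires estimating, over a space–time region whose spatial width grows linearly in $t_0$ (the front has by then travelled a distance of order $t_0$), the probability that the extra fertile individuals created by the non-attractive dynamics propagate from deep behind the front all the way up to it, and the attendant union and large-deviation bounds can only be pushed to a sublinear power of $t_0$. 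Finally, transferring every statement to the conditioned law $\mathbb P(\cdot\mid\mathcal S_\eta(c))$ is routine, since $\mathbb P(\mathcal S_\eta(c))\ge\mathbb P(\mathcal S_\xi(c))>0$ uniformly in $c$ by Remark~\ref{SupercriticalSpont} and the coupling, so conditioning inflates every probability by at most a bounded factor.
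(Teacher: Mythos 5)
Your survival half and the transfer to the conditioned law are fine and match the paper, but the invariance half — which you correctly identify as the crux — has a genuine gap. Your ``front-interference'' event $B_{t_0}$ mischaracterizes the obstruction: the danger is not that an active infection path issued from behind the front catches up and modifies it (every fertile particle after time $t_1$ descends from a site at or behind the front anyway, by a crossing-path argument), but that sites \emph{ahead} of the front which are blocked in one initial configuration and empty in the other keep that disagreement until the front tries to jump onto them. The paper resolves this with a deterministic structural lemma (Lemma~\ref{split_case_CPIS}): the first time the fronts of two IS processes started from configurations in $\mathcal C^x$ differ must be a rightward jump onto a site that has never been visited by a fertile individual in either process and, consequently, has never seen a healing mark. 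With this lemma in hand, \emph{no new probabilistic estimate is needed}: the sandwich $\xi_t^c\lesssim\eta_t^c\lesssim\zeta_{1,t}^{h_{x+1}}$ lets one reuse verbatim the events $I$, $H$, $R$ from the Spont proof (with $R$ controlling the dominating classical contact process front), giving $I\cap R\cap H\cap\mathcal S_\xi(\delta_x^-)\subset\{F(\eta_\cdot,c)=\infty\}$. Your proposal gestures at ``inheriting'' invariance from Spont invariance off $B_{t_0}$, but without the structural lemma the event $B_{t_0}$ is not well defined and the inheritance claim has no proof; moreover IS invariance is not a consequence of Spont invariance, since the two fronts are genuinely different objects.

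Your part~2 decomposition is also structurally wrong. The inclusion $\{t_0<F(\eta_\cdot,c)<\infty\}\subseteq\{t_0<F(\xi_\cdot,c)<\infty\}\cup B_{t_0}$ cannot hold as stated: the Spont property may already fail before time $t_0$ (so the first event on the right is void) while the IS property holds past $t_0$ and fails later. The paper instead uses the same contrapositive as in the Spont case,
\begin{equation*}
\mathbb{P}\left(t_0<F(\eta_\cdot,c)<\infty\right)\;\leq\;\mathbb{P}\left(t_0<\tau_\eta(c)<\infty\right)+\mathbb{P}\left(H(t_0)^c\right)+\mathbb{P}\left(R(t_0)^c\right),
\end{equation*}
and the stretched exponent $e^{-B't_0^p}$ arises \emph{solely} from the first term: the IS extinction-tail bound $\mathbb{P}(t<\tau_\eta(c)<\infty)\leq Ae^{-t^p}$ of Remark~\ref{smallClusterIS}, which is proved by restarting a Spont process from the IS front each time it dies and taking a union bound over roughly $\sqrt{t}$ restarts. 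Your explanation — union and large-deviation bounds over an interference region of width growing linearly in $t_0$ — points at the wrong source and would in any case need a proof that does not exist in your outline.
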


The results are very similar, but we have a stretched exponential estimate for the second one. Their proofs are  postponed to Section~\ref{Sec_Prop}, since our immediate goal is to show how this object can be used to construct the previously mentioned sequence of renewal times.

\subsection{Definition of renewal times}\label{subDefinitionOfTau}

\paragraph{} Our goal is to define a sequence of renewal times for both processes and we will show in Section~\ref{Sec_Markov} that the increments of the position of the rightmost 1 between these times are i.i.d. 

 Let $x\in \mathbb{Z}$ and $c\in \mathcal{C}^x$. We will construct a sequence of times~$(\sigma_n(\chi_\cdot,c))_{n\in\mathbb N_0}$ associated to this process. We may write~$\sigma_n(\chi_\cdot,c)$ simply as~$\sigma_n$ for when the process and its initial configuration are clear from the context. 

We will define $\sigma_0$ the first element of this sequence with the help of a sequence~$(F_n^0)_{n\in\mathbb N_0}$ of stopping times in the following way. Let~$F_0^0=0$. Define:
\begin{align*}
    F_1^0(\chi_\cdot,c) = \inf \big\{ s \geq 0\,:\, \mathcal{P}(\chi_\cdot,c,[0, s])\text{ is not satisfied}
    \big\}. 
\end{align*}
 We have two possibilities. If $F_1^0=\infty$, we let $\sigma_0 = F_0^0=0$ and complete the sequence of stopping times by making  $F_2^0=F_3^0=\dots=\infty$. Otherwise, if $F_1^0<\infty$, we shall try again to search for a moment where the special property is verified. Then, suppose that we have built for some $k\in \mathbb N$ a sequence of failure times $F_1^0< F_2^0< \dots < F_k^0<\infty$. Then, let $F_{k+1}^0$ be the following stopping time: 
\begin{align*}
     F_{k+1}^0(\chi_\cdot,c) = \inf \big\{ s \geq F_k^0\,:\, \mathcal{P}(\chi_\cdot,c,[F_k^0, s])\text{ is not satisfied}
    \big\}. 
\end{align*}

    If $F_{k+1}^0=\infty$, let $\sigma_0 = F_k^0$ and complete the sequence by making $F_{k+2}^0=\dots=\infty$. Otherwise, if $F_{k+1}^0<\infty$, we proceed as before. 
    
    One can then write $\sigma_0$ using the following decomposition in terms of stopping times: 
    \begin{equation} \label{defOfSigmaSpont}
        \sigma_0(\chi_\cdot,c)= \inf\left\{F_{k}^0(\chi_\cdot,c)\,:\,F_{k+1}^0(\chi_\cdot,c) = \infty\right\}.
    \end{equation}
\begin{rk}
    We have that $\left\{\sigma_0(\chi_\cdot,c)=0\right\}=\{F_1^0(\chi_\cdot,c)=\infty\}=\{F(\chi_\cdot,c)=\infty\}.$
\end{rk}

The following result is a consequence of Propositions~\ref{FPossibleAndSmallCluster} and~\ref{PropertiesFailureTimeCPIS} and guarantees that the random time~$\sigma_0$ as above is, conditioned on the survival, finite almost surely for $\chi_\cdot$ being a Spont or an IS process with parameters~$\lambda>\lambda_c$ and~$p>\tilde{p}$ as in Remark~\ref{SupercriticalSpont}. 
\begin{cor} \label{CorSpontRenTimes}
        There exist constants $a,b,p>0$ such that~${\mathbb{P}}\left(t<\sigma_0(\chi_\cdot,c)\mid {\mathcal S_\chi(c)}\right)\leq ae^{-bt^p}$. In particular, this implies that $\sigma_0(\chi_\cdot,c)$ is almost surely finite. 
\end{cor}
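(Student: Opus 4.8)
The plan is to exploit the regeneration structure built into the definition~\eqref{defOfSigmaSpont} of $\sigma_0$ and to reduce the tail estimate to controlling a geometric sum of (stretched-)exponentially tailed increments. Throughout I work under the conditioned law $\mathbb{P}(\cdot \mid \mathcal{S}_\chi(c))$, which is the regime in which Propositions~\ref{FPossibleAndSmallCluster} and~\ref{PropertiesFailureTimeCPIS} have been stated. Recall that, by construction, $\sigma_0 = F_N^0$ where $N = \inf\{k \geq 0 : F_{k+1}^0 = \infty\}$, so that $\sigma_0 = \sum_{k=0}^{N-1}(F_{k+1}^0 - F_k^0)$ is a sum of $N$ finite increments (with $F_0^0 = 0$). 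It therefore suffices to show that $N$ has a geometric tail and that each finite increment has a uniform (stretched-)exponential tail, and then to combine the two.

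First I would set up the regeneration step. Each $F_k^0$ is a stopping time for the filtration generated by the graphical construction $\mathcal{H}$, so I can apply the strong Markov property at $F_k^0$: conditionally on $\mathcal{F}_{F_k^0}$ and on $\{F_k^0 < \infty\}$, the restricted construction $\mathcal{H}[F_k^0, \infty)$ is independent of the past and has the same law as $\mathcal{H}$, while the process restarts from the configuration $d = \chi^c_{F_k^0} \in \mathcal{C}^{r_{F_k^0}(c)}$. Consequently the increment $F_{k+1}^0 - F_k^0$ is distributed as $F(\chi_\cdot, d)$. Here the decisive point is that the constants in Propositions~\ref{FPossibleAndSmallCluster} and~\ref{PropertiesFailureTimeCPIS} do not depend on the initial configuration: this yields, uniformly in $k$ and in the (random) configuration $d$, a constant $\rho > 0$ with $\mathbb{P}(F_{k+1}^0 = \infty \mid \mathcal{F}_{F_k^0}) \geq \rho$ and constants $A', B', p > 0$ with $\mathbb{P}(t < F_{k+1}^0 - F_k^0 < \infty \mid \mathcal{F}_{F_k^0}) \leq A' e^{-B' t^p}$. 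The delicate point, which I expect to be the main obstacle, is to justify these conditional bounds \emph{under the survival conditioning}, since conditioning on the tail event $\mathcal{S}_\chi(c)$ does not a priori commute with the Markov property. This is resolved by observing that, on $\{F_k^0 < \infty\}$, survival of the original process from time $F_k^0$ onward coincides with survival of the process restarted from $d$ via $\mathcal{H}[F_k^0, \infty)$, so that conditioning on $\mathcal{S}_\chi(c)$ reduces to conditioning the restarted process on its own survival---precisely the situation covered by the propositions, with constants independent of $d$.

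With these two ingredients in hand, I would pass to stochastic domination: the number of failures $N$ is dominated by a geometric variable, $\mathbb{P}(N \geq n \mid \mathcal{S}_\chi(c)) \leq (1-\rho)^n$, and each finite increment is dominated by an i.i.d.\ copy $T_i$ with $\mathbb{P}(T_i > t) \leq A' e^{-B' t^p}$, giving $\sigma_0 \preceq \sum_{i=1}^N T_i$. It then remains to bound the tail of this geometric sum, which I would do by splitting on the size of $N$. For a parameter $\alpha \in (0,1)$ to be chosen, on $\{N > t^\alpha\}$ the geometric bound gives $\mathbb{P}(N > t^\alpha) \leq e^{-c t^\alpha}$, while on $\{N \leq t^\alpha\}$ the event $\sum_{i=1}^N T_i > t$ forces some $T_i > t^{1-\alpha}$, so a union bound yields at most $t^\alpha A' e^{-B' t^{(1-\alpha)p}}$. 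Summing the two contributions and optimizing $\alpha$ (balancing $\alpha$ against $(1-\alpha)p$) produces a single bound of the desired form $a e^{-b t^{p'}}$ with $p' = \min(\alpha, (1-\alpha)p) > 0$, which is exactly the claimed estimate; the almost-sure finiteness of $\sigma_0$ is then immediate, since the tail tends to $0$.
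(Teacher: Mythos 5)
Your proposal is correct and follows essentially the same route as the paper's proof: the paper also splits on the number of failure times before $t$ (with threshold $t^{1/4}$ rather than your optimized $t^\alpha$), bounds the many-failures case geometrically via iterated conditioning under the survival-conditioned measure, and bounds the few-failures case by noting that some increment $F_j^0 - F_{j-1}^0$ must then be large, applying the uniform tail bounds of Propositions~\ref{FPossibleAndSmallCluster} and~\ref{PropertiesFailureTimeCPIS} together with a union bound. Your explicit discussion of why the survival conditioning is compatible with the strong Markov property at the stopping times $F_k^0$ is a point the paper treats only implicitly, but it does not change the substance of the argument.
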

    
\begin{proof} 
To simplify the notation in the proof, we will denote by $\mathbb{P}_{\mathcal S}$ the conditioned measure and by ${\mathbb{E}_{\mathcal S}}$ the associated expectation operator. Let~$t\in[0,\infty)$ be given and note that it is enough to show this result for~$t$ large enough as one can then increase the constant~$a$. Let~$k(t)=\max\{k\in\mathbb N\,:\,F_k^0\leq t\}$ be the index of the last failure time before~$t$. We have the following decomposition:

    \begin{equation*}
\mathbb{P}_{\mathcal S}\left(t<\sigma_0(\chi_\cdot,c)\right) \leq 
\underbrace{
  \mathbb{P}_{\mathcal S}\left(\{t<\sigma_0(\chi_\cdot,c)\} \cap \{k(t) \geq  t^{1/4}\}\right)
}_{\text{(1)}} 
+ 
\underbrace{
  \mathbb{P}_{\mathcal S}\left(\{t<\sigma_0(\chi_\cdot,c)\}\cap \{k(t)<t^{1/4}\}\right)
}_{\text{(2)}}
\end{equation*}

We begin with~$(1)$. Since the event in this probability is contained in the event where~$k(t)\geq t^{1/4}$ , we bound the probability of the latter:

\begin{align}
    \mathbb{P}_{\mathcal S}(k(t)\geq t^{1/4}) & \leq \mathbb{P}_{\mathcal S}\left(F_1,\dots,F_{\lfloor t^{1/4} \rfloor }<\infty\right) \\
      &= {\mathbb{E}_{\mathcal S}}\left[{\mathbb{E}_{\mathcal S}}\left[\mathds{1}_{F_1,\dots,F_{\lfloor t^{1/4} \rfloor }<\infty}\mid \mathcal F_{F_{\lfloor t^{1/4} \rfloor -1}}\right]\right] \\ \label{cor_1_inequality1}
    &={\mathbb{E}_{\mathcal S}}\left[\mathds{1}_{F_1,\dots,F_{\lfloor t^{1/4} \rfloor -1}<\infty}{\mathbb{E}_{\mathcal S,c}}\left[\mathds{1}_{F_{\lfloor t^{1/4} \rfloor }<\infty}\mid\mathcal F_{F_{\lfloor t^{1/4} \rfloor -1}}\right]\right] \\ \label{cor_1_inequality2}
    &\leq p{\mathbb{E}_{\mathcal S}}\left[\mathds{1}_{F_1,\dots,F_{\lfloor t^{1/4} \rfloor -1}<\infty}\right] \leq \cdots \leq q^{\lfloor t^{1/4} \rfloor } .
\end{align}

The first inequality in~\eqref{cor_1_inequality2} follows for some~$q<1$ due to the first conclusion of Propositions~\ref{FPossibleAndSmallCluster} and~\ref{PropertiesFailureTimeCPIS}; the final inequality follows by repeating the same procedure iteratively. 

We now treat the term in~$(2)$. The event inside that probability implies that there exists~$j\in\{1,\dots,k(t)+1\}$ such that~$F_j<\infty$ and~$F_j-F_{j-1}>t^{1/4}$. However, we have 
\begin{align*}
    \mathbb{P}_{\mathcal S}&\left(\{F_j-F_{j-1}>t^{1/4}\}\cap\{ F_j<\infty\}\right) = \\
    & = {\mathbb{E}_{\mathcal S}}\left[{\mathbb{E}_{\mathcal S}}\left[\mathds{1}_{\{F_j-F_{j-1}>t^{1/4}\}\cap \{F_j<\infty\}}\mid\mathcal F_{F_j}\right]\right]\leq Ae^{-B(t^{1/4})^p}
\end{align*}
due to the second conclusion of Propositions~\ref{FPossibleAndSmallCluster} and~\ref{PropertiesFailureTimeCPIS}. Thus, by an union bound it follows that~$(2)$ is bounded by~$t^{1/4+1}Ae^{-B(t^{1/4})^p}$, which concludes the proof. 
\end{proof}

Since~$\sigma_0(\chi_\cdot,c)$ is almost surely finite conditionally on survival, we now define the other terms of the sequence~$(\sigma_n)_{n\in\mathbb N_0}$. Suppose that this sequence has been defined until some time~$\sigma_m$ for some~$m\in\mathbb N_0$. We then let~$F_0^{m+1}=\sigma_m+1$ and we define:
\begin{align} \label{def_Generic_F}
    F_1^{m+1}(\chi_\cdot,c) = \inf \big\{ s \geq F_0^{m+1}\,:\, \mathcal{P}(\chi_\cdot,c,[F_0^{m+1}, s])\text{ is not satisfied}
    \big\} 
\end{align}

If~$F_1^{m+1}=\infty$, we let~$\sigma_{m+1}=F_0^{m+1}=\sigma_k+1$ and we complete the sequence by declaring~$F_2^{m+1}=F_3^{m+1}=\dots=\infty$; otherwise, we try again, namely: assume that we have build the sequence of failure times~$F^{m+1}_1<F_2^{m+1}<\dots<F_k^{m+1}<\infty$. Then, let:
\begin{align*}
     F_{k+1}^{m+1}(\chi_\cdot,c) = \inf \big\{ s \geq F_k^{m+1}\,:\, \mathcal{P}(\chi_\cdot,c,[F_k^{m+1}, s])\text{ is not satisfied}
    \big\} 
\end{align*}

If~$F^{m+1}_{k+1}=\infty$, let~$\sigma_{m+1}=F^{m+1}_k$ and complete the sequence declaring~$F^{m+1}_{k+2}=F^{m+1}_{k+3}=\dots=\infty$; otherwise, we go back to the step before. In that way, we can define~$\sigma_{m+1}$ using this sequence of stopping times:

\begin{equation} \label{defOfGenericSigmaSpont}
        \sigma_{m+1}(\chi_\cdot,c)= \inf\{F_{k}^{m+1}\,:\,F_{k+1}^{m+1} = \infty\}.
    \end{equation}
\begin{prop}\label{Prop_Time_Between_ren}
    There exists constants~$c,C,p>0$ independent of the initial configuration~$c\in\mathcal C^x$ such that for~$\sigma_0,\sigma_1$ as in~\eqref{defOfGenericSigmaSpont} for the Spont process it follows that:
    \begin{equation*}
        \mathbb{P}\left(t<\sigma_1(\xi_\cdot,c)\,\mid\,\sigma_0(\xi_\cdot,c)=0\right) \leq Ce^{-ct^p}
    \end{equation*}
\end{prop}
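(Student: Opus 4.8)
The plan is to reduce the statement to the tail estimate already obtained for $\sigma_0$ in Corollary~\ref{CorSpontRenTimes}, exploiting the fact that conditioning on $\{\sigma_0 = 0\}$ freezes the starting point $F_0^1 = \sigma_0 + 1$ at the deterministic value $1$. Two observations make the conditioning harmless. First, $\{\sigma_0(\xi_\cdot,c) = 0\} = \{F(\xi_\cdot,c)=\infty\}$, so by the first part of Proposition~\ref{FPossibleAndSmallCluster} there is a uniform lower bound $\mathbb{P}(\sigma_0 = 0)\geq \kappa > 0$ with $\kappa$ independent of $c \in \mathcal{C}^x$. Second, since the survival property is part of $\mathcal{P}$, we have the inclusion $\{\sigma_0 = 0\} \subseteq \mathcal{S}_\xi(c)$.

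I would then introduce the auxiliary renewal time $\tilde\sigma$ built exactly as $\sigma_1$ but starting the recursion at the \emph{deterministic} time $\tilde F_0 = 1$ instead of at $F_0^1 = \sigma_0 + 1$: set $\tilde F_{k+1} = \inf\{s \geq \tilde F_k : \mathcal{P}(\xi_\cdot, c, [\tilde F_k, s]) \text{ is not satisfied}\}$ and $\tilde\sigma = \inf\{\tilde F_k : \tilde F_{k+1} = \infty\}$. The point is that the $\tilde F_k$ are genuine stopping times, their definition beginning at a deterministic time, whereas the $F_k^1$ are not, since $F_0^1$ depends on the renewal time $\sigma_0$. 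On the event $\{\sigma_0 = 0\}$ we have $F_0^1 = 1 = \tilde F_0$, hence $\sigma_1 = \tilde\sigma$; combining this with $\{\sigma_0=0\}\subseteq\mathcal{S}_\xi(c)$ and the lower bound $\kappa$ gives
\[
\mathbb{P}(t < \sigma_1 \mid \sigma_0 = 0) = \frac{\mathbb{P}(t<\tilde\sigma,\ \sigma_0=0)}{\mathbb{P}(\sigma_0=0)} \leq \frac{1}{\kappa}\,\mathbb{P}\big(t<\tilde\sigma,\ \mathcal{S}_\xi(c)\big) \leq \frac{1}{\kappa}\,\mathbb{P}\big(t<\tilde\sigma \mid \mathcal{S}_\xi(c)\big),
\]
which transports the problem to the survival–conditioned measure, where Proposition~\ref{FPossibleAndSmallCluster} is available (its ``moreover'' clause).

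Finally I would bound $\mathbb{P}(t<\tilde\sigma \mid \mathcal{S}_\xi(c))$ by repeating the argument of Corollary~\ref{CorSpontRenTimes}, now applied to the stopping–time sequence $(\tilde F_k)$ from the deterministic time $1$. Writing $k(t)$ for the number of failure times $\tilde F_k$ lying in $[1,t]$, I split $\{t<\tilde\sigma\}$ into $\{k(t)\geq t^{1/4}\}$ and its complement. On the first event at least $\lfloor t^{1/4}\rfloor$ consecutive failures occur; conditioning successively on $\mathcal{F}_{\tilde F_{k-1}}$ and using the strong Markov property together with the first conclusion of Proposition~\ref{FPossibleAndSmallCluster} (each failure is followed by another with conditional probability at most some $q<1$, uniformly in the configuration and under the survival measure) yields the bound $q^{\lfloor t^{1/4}\rfloor}$. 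On the complementary event, $\tilde\sigma>t$ with fewer than $t^{1/4}$ failures forces some inter-failure gap to exceed $t^{1/4}$ for $t$ large, since the at most $t^{1/4}+1$ gaps sum to more than $t-1$; the second conclusion of Proposition~\ref{FPossibleAndSmallCluster} controls each such finite gap by $A e^{-B(t^{1/4})^p}$, and a union bound over the gaps closes the estimate. Combining the two contributions gives a stretched–exponential bound, and the factor $1/\kappa$ only affects the constant, yielding the claimed $Ce^{-ct^p}$.

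The main obstacle is conceptual rather than computational: $\sigma_0$ is a renewal time, not a stopping time, so the failure times $F_k^1$ defining $\sigma_1$ are not stopping times and the strong Markov property cannot be applied to them directly. The entire argument hinges on the remark that conditioning on $\{\sigma_0=0\}$ pins $F_0^1$ to the deterministic value $1$, which restores the stopping–time structure through $\tilde\sigma$; the residual correlation between the future event $\{\sigma_0=0\}$ and the tail of $\sigma_1$ is then absorbed into the constant $1/\kappa$, thanks to the uniform positivity of $\mathbb{P}(\sigma_0=0)$ and the inclusion $\{\sigma_0=0\}\subseteq\mathcal{S}_\xi(c)$. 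A minor point to verify is that the uniformity in $c\in\mathcal{C}^x$ in Proposition~\ref{FPossibleAndSmallCluster}, applied at each $\tilde F_k$ to the current configuration $\xi^c_{\tilde F_k}\in\mathcal{C}^{r(\xi^c_{\tilde F_k})}$, is exactly what legitimises the repeated use of the strong Markov property, as in the proof of Corollary~\ref{CorSpontRenTimes}.
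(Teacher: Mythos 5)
Your proof is correct, and it reaches the estimate by a genuinely different route than the paper's. The paper keeps everything under the measure $\mathbb{P}(\cdot\mid\sigma_0=0)$: it reduces the proposition to showing the two conclusions of Proposition~\ref{FPossibleAndSmallCluster} under that measure, proves the positivity $\mathbb{P}(F_1^1=\infty\mid\sigma_0=0)>0$ by building time-shifted events $I'$, $R'$, $H'$ (analogues of $I$, $R$, $H$, now on $[0,2]$ and beyond) whose intersection with survival of a hostile process restarted at time $1$ lands inside $\{F_1^1=\infty\}\cap\{\sigma_0=0\}$ via Lemmas~\ref{InferenceProperty} and~\ref{split_site_never_visited}, transfers the tail of $F_1^1$ by the crude factor $1/\mathbb{P}(\sigma_0=0)$, and then repeats the argument of Corollary~\ref{CorSpontRenTimes} under $\mathbb{P}(\cdot\mid\sigma_0=0)$. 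You instead dispose of the conditioning in one stroke: on $\{\sigma_0=0\}$ one has $\sigma_1=\tilde\sigma$ with $\tilde\sigma$ started from the deterministic time $1$, the inclusion $\{\sigma_0=0\}\subseteq\mathcal{S}_\xi(c)$ and the uniform bound $\mathbb{P}(\sigma_0=0)\geq\kappa>0$ (which, by Lemma~\ref{cDoesNotMatterFInfty} and translation invariance, is in fact configuration-independent) give $\mathbb{P}(t<\sigma_1\mid\sigma_0=0)\leq\kappa^{-1}\,\mathbb{P}(t<\tilde\sigma\mid\mathcal{S}_\xi(c))$, and the Corollary~\ref{CorSpontRenTimes} iteration is run once, under the survival-conditioned measure, where the ``moreover'' clause of Proposition~\ref{FPossibleAndSmallCluster} applies directly. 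What your route buys: no new geometric construction is needed, and the iteration lives under a measure for which restarting at the stopping times $\tilde F_k$ is clean, since given $\mathcal{F}_{\tilde F_k}$ survival of the original process coincides with survival of the restarted one; by contrast, $\{\sigma_0=0\}$ couples past and future through the invariance property and does not factorize so transparently across stopping times, so the paper's plan of rerunning the corollary under $\mathbb{P}(\cdot\mid\sigma_0=0)$ is actually the more delicate of the two. What the paper's route buys: the intermediate fact $\mathbb{P}(F_1^1=\infty\mid\sigma_0=0)>0$ and estimates phrased directly under $\bar{\mathbb{P}}^x_\chi$, the measure used later for the Markov property and the i.i.d. decomposition in Section~\ref{Sec_Markov}. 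Both proofs share the same skeleton (the two-case decomposition of Corollary~\ref{CorSpontRenTimes} fed by Proposition~\ref{FPossibleAndSmallCluster}) and the same implicit use of the strong Markov property under a conditioned measure, so your proposal is at the paper's level of rigor.
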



\begin{prop}\label{Prop_Time_Between_renCPIS}
    There exists constants~$c,C,p>0$ independent of the initial configuration~$c\in\mathcal C^x$ such that for~$\sigma_0,\sigma_1$ as in~\eqref{defOfGenericSigmaSpont} for the inherited sterility process it follows that:
    \begin{equation*}
    \mathbb{P}\left(t<\sigma_1(\eta_\cdot,c)\,\mid\,\sigma_0(\eta_\cdot,c)=0\right) \leq Ce^{-ct^p}
    \end{equation*}
\end{prop}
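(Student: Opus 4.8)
The plan is to reproduce the scheme of Corollary~\ref{CorSpontRenTimes}, the one genuinely new point being to dispose of the future-dependent conditioning on $\{\sigma_0(\xi_\cdot,c)=0\}$. First I would note that $\{\sigma_0=0\}=\{F_1^0=\infty\}=\{F(\xi_\cdot,c)=\infty\}$, so the first conclusion of Proposition~\ref{FPossibleAndSmallCluster} yields a constant $\delta>0$, independent of $c\in\mathcal C^x$, with $\mathbb P(\sigma_0=0)\geq\delta$. Hence for any event $E$,
\[
\mathbb P(E\mid\sigma_0=0)=\frac{\mathbb P(E\cap\{\sigma_0=0\})}{\mathbb P(\sigma_0=0)}\leq\frac1\delta\,\mathbb P\big(E\cap\{\sigma_0=0\}\big),
\]
so it is enough to bound $\mathbb P(\{t<\sigma_1\}\cap\{\sigma_0=0\})$ by a stretched exponential.

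On $\{\sigma_0=0\}$ one has $F_0^1=\sigma_0+1=1$, the event $\mathcal P(\xi_\cdot,c,[0,\infty))$ holds — so in particular the process survives and $r_s(c)$ is defined for all $s\ge0$ — and $\sigma_1$ is the outcome of the search started at time $1$ in~\eqref{defOfGenericSigmaSpont}. Writing $G_k:=F_k^1$ ($G_0=1$) for the associated failure times, each $G_k$ is a stopping time for the natural filtration $(\mathcal F_s)$ of $\mathcal H$, and at a finite $G_k$ the configuration $\xi_{G_k}^c$ is some element of $\mathcal C^{r_{G_k}(c)}$. Since $\mathcal H[G_k,\infty)$ is independent of $\mathcal F_{G_k}$, the strong Markov property combined with the uniformity in the initial configuration of Proposition~\ref{FPossibleAndSmallCluster} gives, on the event that the process is alive at $G_k$,
\[
\mathbb P\big(G_{k+1}<\infty\mid\mathcal F_{G_k}\big)\leq q<1
\qquad\text{and}\qquad
\mathbb P\big(G_{k+1}-G_k>t_0,\,G_{k+1}<\infty\mid\mathcal F_{G_k}\big)\leq Ae^{-Bt_0},
\]
with $q,A,B$ independent of $c$ (the first bound from part~$1$, the second from part~$2$ of the proposition). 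The point is that aliveness at every $G_k$ is automatic on $\{\sigma_0=0\}$, so these per-step estimates are legitimately available there.

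I would then copy the two-part decomposition of Corollary~\ref{CorSpontRenTimes}. With $k(t)=\max\{k:G_k\le t\}$, split $\{t<\sigma_1\}\cap\{\sigma_0=0\}$ according to $k(t)\ge t^{1/4}$ or $k(t)<t^{1/4}$. In the first case the event forces $G_1,\dots,G_{\lfloor t^{1/4}\rfloor}<\infty$ with the process alive throughout, and iterating the first per-step bound contributes at most $q^{\lfloor t^{1/4}\rfloor}$. In the second case, no renewal by time $t$ together with at most $\lfloor t^{1/4}\rfloor+1$ gaps summing to more than $t-1$ forces at least one finite gap $G_j-G_{j-1}>t^{1/4}$; a union bound over those gaps and the second per-step bound contribute at most $(t^{1/4}+1)Ae^{-Bt^{1/4}}$. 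Summing and dividing by $\delta$ gives $Ce^{-ct^{1/4}}$, so $p=1/4$ works for the Spont process.

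The hard part is exactly the obstruction flagged in the introduction: $\{\sigma_0=0\}$ and $\sigma_1$ both depend on the whole future of $\mathcal H$, so the strong Markov property is not directly applicable to the conditional law. The device that unlocks everything is the uniform lower bound $\mathbb P(\sigma_0=0)\ge\delta$, which converts the conditional statement into an unconditional one at the cost of a harmless factor; after this reduction the $G_k=F_k^1$ are genuine stopping times and the survival built into $\{\sigma_0=0\}$ supplies the aliveness needed to apply Proposition~\ref{FPossibleAndSmallCluster} uniformly at each of them. The only model-specific input is the genuine exponential gap bound of Proposition~\ref{FPossibleAndSmallCluster}; the IS analogue (Proposition~\ref{Prop_Time_Between_renCPIS}) runs verbatim, except that the stretched-exponential gap bound of Proposition~\ref{PropertiesFailureTimeCPIS} degrades the final exponent to a smaller $p>0$.
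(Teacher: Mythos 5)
Your proof is correct, and it reaches the bound with the same engine the paper uses --- the two-case iteration of Corollary~\ref{CorSpontRenTimes} fed by the per-step estimates of Proposition~\ref{PropertiesFailureTimeCPIS} --- but it disposes of the future-dependent conditioning by a genuinely different, and leaner, device. The paper reduces Proposition~\ref{Prop_Time_Between_renCPIS} to Proposition~\ref{Prop_Time_Between_ren}, whose proof establishes conditional analogues of the per-step estimates under $\mathbb{P}(\,\cdot\mid\sigma_0=0)$: the positivity $\mathbb{P}(F_1^1=\infty\mid\sigma_0=0)>0$ is obtained by re-running the geometric construction of Proposition~\ref{FPossibleAndSmallCluster} with time-shifted events $I'$, $R'$, $H'$ and the inclusion $I'\cap R'\cap H'\cap\tilde{\mathcal{S}}_\xi(\delta_x^-)\subset\{F_1^1=\infty\}\cap\{\sigma_0=0\}$ (via Lemma~\ref{InferenceProperty} together with Lemma~\ref{split_site_never_visited}, replaced by Lemma~\ref{split_case_CPIS} for the IS process), while the conditional tail of $F_1^1$ is bounded by the unconditional tail divided by $\mathbb{P}(\sigma_0=0)$. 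You apply that division trick once, globally: since $\mathbb{P}(\sigma_0=0)\geq\delta>0$ uniformly in $c$, it suffices to bound $\mathbb{P}(\{t<\sigma_1\}\cap\{\sigma_0=0\})$, and on that event the process is alive at every finite $G_k=F_k^1$, so the strong Markov property at these honest stopping times plus the \emph{unconditional} Proposition~\ref{PropertiesFailureTimeCPIS} supplies the per-step bounds; no conditional analogue, hence no $I'$, $R'$, $H'$ construction and no appeal to Lemmas~\ref{InferenceProperty} or~\ref{split_case_CPIS}, is needed at this stage. Your route is shorter, uses Proposition~\ref{PropertiesFailureTimeCPIS} purely as a black box, and keeps the iteration under $\mathbb{P}$, where the $F_k^1$ are genuine stopping times and the strong Markov property is unproblematic; the paper's route instead manufactures per-step statements directly under the conditioned measure, which is what lets it invoke the proof of Corollary~\ref{CorSpontRenTimes} verbatim there. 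The only point you should make explicit is why, conditionally on $\mathcal{F}_{G_k}$, the events $\{G_{k+1}<\infty\}$ and $\{G_{k+1}-G_k>t_0,\,G_{k+1}<\infty\}$ depend only on $r_{G_k}(c)$ and on $\mathcal{H}[G_k,\infty)$: this holds because the property in Definition~\ref{specialPropertySpont} quantifies over all $d\in\mathcal{C}^{r_{G_k}(c)}$, so that the restarted failure time has exactly the law of $F$ for a fresh construction (cf.\ Lemma~\ref{cDoesNotMatterFInfty}); this is a one-line remark, not a gap.
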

The proofs of Proposition~\ref{Prop_Time_Between_renCPIS} and~\ref{Prop_Time_Between_ren} are respectively postponed to Section~\ref{SectionMainProofCPIS} and~\ref{SubPropSpont}.

\section{Estimates on the occurrence of the special property} \label{Sec_Prop}

\subsection{Conditions for discrepancies in Spont rightmost positions}

\paragraph{} The key idea behind this section arises from answering the following question: how can the position of the rightmost particle of two Spont processes started from two different configurations in~$\mathcal C^x$ end up in different positions? Thanks to attractiveness, it is enough to compare two processes started from $c\in\mathcal C^x$ and $\delta_x^-$.

 Let $x\in\Z$; for a Spont process started from~$\delta_x^-$ we will denote by
\begin{align*}
    r_t^-=\sup\{y\in\mathbb Z\,:\,\xi_t^{\delta_x^-}(y)=1\}~~\text{ and }~~
    \ell_t^{-}=\inf\{y\in\mathbb Z\,:\,\xi_t^{\delta_x^-}(y)=1\}
\end{align*}
the position of its rightmost and its leftmost site in state~$1$ at moment~$t$, respectively.

\begin{lem} \label{AgreeInsideInterval}
    Let~$x\in \mathbb{Z}$ and~$c\in \mathcal{C}^x$. Assume that~$(\xi_t^{\delta_x^-})_{t\geq 0}$ survives. Then, for any time $t\in [0,\infty)$, we have
    \[  
    \xi_t^{c}(y)=\xi_t^{\delta_x^-},\text{ for all }y\in [\ell_t^-,r_t^-].
    \] 
\end{lem}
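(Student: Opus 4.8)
The plan is to combine the attractivity of the Spont process with a first-discrepancy-time argument, exploiting the key structural feature that, among the three kinds of marks in the graphical construction, only the type-$1$ (birth) arrows depend on the state of a neighbouring site: healing and spontaneous blocking act on the target site alone. I would first record the consequences of attractivity. Since $\delta_x^-$ is the minimal configuration in $\mathcal C^x$, we have $\delta_x^-\lesssim c$, so the basic coupling gives $\xi_t^{\delta_x^-}\lesssim\xi_t^c$ for every $t$. In particular both processes equal $1$ at the boundary sites $\ell_t^-$ and $r_t^-$, so any discrepancy inside $[\ell_t^-,r_t^-]$ must sit at a strictly interior site where $\xi_t^{\delta_x^-}\in\{-1,0\}$. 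The survival hypothesis ensures the hostile process always carries a $1$, hence $\ell_t^-\le r_t^-$ are finite and the interval is well defined for all $t$.

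I would then argue by contradiction, letting $P=\inf\{s\ge 0:\exists\,y\in[\ell_s^-,r_s^-],\ \xi_s^c(y)\neq\xi_s^{\delta_x^-}(y)\}$; this is positive since at $t=0$ the interval is $\{x\}$, where both processes are $1$. Because the relevant trajectories are integer-valued and càdlàg, the boundaries stabilise on a left neighbourhood of $P$, so the two processes agree on the closed interval $[\ell_{P^-}^-,r_{P^-}^-]$ just before $P$, and the discrepancy at $P$ is created by the unique mark acting there, on its affected site $y_0$. A healing mark sends both processes to $0$ and a blocking arrival sends a commonly-$0$ site to $-1$ in both, so neither produces a discrepancy; the mark must therefore be a type-$1$ arrow from a neighbour $z\sim y_0$. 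Using $\xi^{\delta_x^-}\lesssim\xi^c$, the only way this creates a discrepancy is that $\xi^c$ gives birth at $y_0$ while the hostile process does not — that is, just before $P$ the site $y_0$ is $0$ in both, while $z$ is a $1$ for $\xi^c$ but not for $\xi^{\delta_x^-}$.

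The contradiction comes from locality. Since $y_0$ is $0$ just before $P$, it cannot be an endpoint $\ell_{P^-}^-$ or $r_{P^-}^-$ (those carry $1$'s), so $y_0$ is strictly interior and both neighbours lie in $[\ell_{P^-}^-,r_{P^-}^-]$. By minimality of $P$ the processes agree at $z$ just before $P$, so $z$ being a $1$ for $\xi^c$ forces it to be a $1$ for $\xi^{\delta_x^-}$ as well; the hostile process then births at $y_0$ through the same arrow, and the two processes still agree there, contradicting the definition of $P$. The step needing the most care — and the main obstacle — is the bookkeeping at $P$: one must check that no strictly interior site enters the interval through a jump of $\ell^-$ or $r^-$ (a single arrow moves each boundary by at most one, and the freshly exposed boundary site is a $1$ where the processes already agree), and that the left limits $\ell_{P^-}^-,r_{P^-}^-$ faithfully transport the agreement of all earlier times. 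Conceptually this is the crossing-paths mechanism behind Lemma~\ref{ActivePathIFF}: an infection path of the $c$-process feeding an interior discrepancy would have to cross the boundary trajectory of the hostile cluster, where its source is already a shared $1$.
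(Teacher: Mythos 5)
Your proof is correct and follows essentially the same argument as the paper: attractivity forces both processes to equal $1$ at $\ell_t^-$ and $r_t^-$, a first-discrepancy-time argument localises the failure to a single mark at an interior site, healing and blocking marks are ruled out because they act identically on a site where the two processes agree, and a type-$1$ arrow is ruled out because its source neighbour lies inside the interval where the processes already agree, yielding the contradiction. Your explicit bookkeeping about boundary jumps (that strictly interior sites at time $P$ were already in the interval just before $P$) is a detail the paper's proof glosses over, but the route is the same.
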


\begin{proof}
    We proceed by contradiction so let~$P\in[0,\infty)$ be the first problematic moment where this property does not hold to be true.
    Then, there exists a site~$x_P\in [\ell_P^-,r_P^-]$ such that~$\xi^{\delta_x^-}_P(x_P)\neq \xi^{c}_P(x_P)$, however at any~$s\in [0,P)$ it would be true that~$\xi^{\delta_x^-}_s(y)=\xi^{c}_s(y)$ for all~$y\in [\ell_s^-, r_s^-]$. Note that on the boundary points~$\ell^-_P$ and~$r_P^-$ the processes always agree since by attractiveness~$\xi_t^{\delta_x^-}\lesssim\xi_t^c$ for all~$t\geq 0$. Thus,~$x_P\in(\ell_P^-,r_P^-)$. Note also that this moment~$P$ must correspond to an arrival moment of~$\mathcal H$ affecting the site~$x_P$, otherwise processes would continue to agree on that site. 

   The blocking marks and healing marks, which turn state site respectively to -1 and to 0, affect both processes simultaneously, so they cannot be the source of a disagreement. So there is a transmission arrow affecting $x_P$ at time $P$ and one has~$\xi_P^c(x_P)=1$ but~$\xi_P^{{\delta_x^-}}(x_P)\neq 1$ because the other way around is not possible due to attractiveness. By definition of $P$, both processes agree on $x_P$ before $P$ so $\xi_{P^-}^c(x_P)=0=\xi_{P^-}^{{\delta_x^-}}(x_P)$. And the origin $y_P$ of the transmission arrow is a neighbour of $x_P$, so $y_P\in[\ell_P^-,r_P^-]$ and $\xi_{P^-}^c(y_P)=1=\xi_{P^-}^{{\delta_x^-}}(y_P)$. So it leads to a contradiction because both processes turn to $1$ at $x_P$. 

\end{proof}

\begin{lem}\label{split_site_never_visited}
    Let~$x\in \mathbb{Z}$ and~$c\in \mathcal{C}^x$. Assume that~$(\xi_t^{{\delta_x^-}})_{t\geq 0}$ survives. Let:
    \begin{equation*}
        D = \inf\{t\geq 0\,:\,r(\xi_t^{\delta_x^-})\neq r(\xi_t^c)\}
    \end{equation*}
    be the first moment where the position of their rightmost occupied site differ for the processes started from~$c$ and from~$\delta_x^-$. Then,~$D$ is must be a moment where the rightmost particle of both processes attempts to jump to a site that has never seen a healing mark until time~$D$.
\end{lem}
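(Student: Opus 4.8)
The plan is to analyze the first time $D$ at which the rightmost occupied sites of the two coupled processes $(\xi_t^{\delta_x^-})$ and $(\xi_t^c)$ diverge, and to show that the divergence can only be triggered by a transmission arrow pointing to a \emph{fresh} site (one that has not yet received a healing mark). First I would recall the basic structure: by attractiveness $\xi_t^{\delta_x^-}\lesssim\xi_t^c$ for all $t$, so $r(\xi_t^{\delta_x^-})\leq r(\xi_t^c)$ at all times, and in particular $D$ is the first moment this inequality becomes strict. Since $\delta_x^-\lesssim c$ and the survival hypothesis on $(\xi_t^{\delta_x^-})$ guarantees $r(\xi_t^{\delta_x^-})$ is finite for all $t$, both rightmost positions are well defined up to and at time $D$.

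Next I would argue that $D$ must correspond to an arrival of the graphical construction $\mathcal H$, since between arrivals the configurations (and hence the rightmost positions) are constant. The key reduction is to rule out every type of mark except a type-$1$ transmission arrow onto a fresh site. Healing marks and blocking marks act identically on both processes, so they cannot create a discrepancy in the rightmost position by themselves; more precisely, just before $D$ the two processes agree on $[\ell_{D^-}^-, r_{D^-}^-]$ by Lemma~\ref{AgreeInsideInterval}, and the common rightmost position sits at the right endpoint of this interval. A healing mark at the common rightmost site would lower both rightmost positions in the same way (using that they agree on the interior of the interval, the new rightmost position is determined identically for both), and a blocking mark cannot fall on an already-occupied site; so neither produces a strict inequality. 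Therefore the discrepancy at $D$ arises from a type-$1$ transmission arrow $\overset{1}{\rightarrow}$ from the current rightmost site $r_{D^-}^-$ to the empty neighbour immediately to its right.

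It then remains to show that this target site has never seen a healing mark before time $D$. The point is that at time $D$ the two processes \emph{disagree} there: the arrow succeeds in one process but not in the other, which can only happen if condition~(P3) (the no-blocking requirement along the active infection path) or the emptiness of the target differs between the two. Here I would use Lemma~\ref{AgreeInsideInterval} together with the characterization of occupancy via active infection paths (Lemma~\ref{ActivePathIFF}): up to time $D$ the two processes agree on $[\ell_s^-, r_s^-]$, so any site in that interval is resolved identically. A discrepancy at the newly reached site to the right of $r_{D^-}^-$ can only occur if that site lies \emph{outside} the common agreement region and its occupancy history differs; the cleanest way this happens is if the site was blocked in the $\delta_x^-$-process but empty in the $c$-process. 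The argument must then show that a site that had ever healed (received a mark of $U^{\cdot}$) before $D$ would have been reset to state $0$ in both processes, forcing agreement there and contradicting that $D$ is the first discrepancy time. Hence the target site has seen no healing mark on $[0,D]$, which is exactly the claim.

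The main obstacle I anticipate is the second step's careful bookkeeping of \emph{why} the target site can be blocked in one process but empty in the other without a prior healing mark producing agreement: this hinges on the asymmetry that in the hostile environment $\delta_x^-$ every site starts blocked, so a site to the right of the rightmost particle that has never healed retains its initial $-1$ state in the $\delta_x^-$-process while possibly being $0$ in the $c$-process. I would make this precise by tracking the state of the candidate target site from time $0$: it is the absence of any healing mark that preserves the initial $-1$ of $\delta_x^-$, blocking the type-$1$ arrow there for the hostile process while permitting it for $c$. Formalizing that "never healed" is both necessary and sufficient for this persistent discrepancy at the jump site is the delicate part, and I would lean on Lemma~\ref{AgreeInsideInterval} to confine all other potential disagreements strictly inside $[\ell_{D^-}^-, r_{D^-}^-]$, leaving the jump target as the only place a first discrepancy can appear.
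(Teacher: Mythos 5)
Your reduction steps coincide with the paper's: $D$ must be an arrival of $\mathcal H$; a healing mark at the common rightmost site $r^-_{D^-}=r_{D^-}(c)$ is excluded by Lemma~\ref{AgreeInsideInterval} (the new rightmost would lie inside the region where the two processes agree); blocking marks never move a rightmost occupied site; hence $D$ carries a type-$1$ arrow from $r^-_{D^-}$ to the target $d:=r^-_{D^-}+1$, which succeeds in the $c$-process and fails in the $\delta_x^-$-process, so that $\xi^c_{D^-}(d)=0$ while $\xi^{\delta_x^-}_{D^-}(d)=-1$. The gap is in the final step, which you yourself flag as ``the delicate part'' but never carry out, and the one-line justification you offer for it is not correct as stated. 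You argue that a healing mark at $d$ before $D$ would ``reset the site to $0$ in both processes, forcing agreement there.'' The reset does force agreement at the instant of the healing mark, but nothing in your sketch prevents the states at $d$ from diverging again afterwards: after the reset, $d$ can be re-occupied in the $c$-process alone (this is compatible with the rightmost positions still agreeing, e.g.\ when the occupied region of the $\delta_x^-$-process has drifted entirely to the right of $d$, so that $d$ lies outside the agreement interval of Lemma~\ref{AgreeInsideInterval}), after which a blocking mark can act on the $\delta_x^-$-process alone, recreating a discrepancy at $d$. Also, the contradiction you invoke (``contradicting that $D$ is the first discrepancy time'') conflates configurations with rightmost positions: the two configurations typically disagree to the right of the rightmost particle from time $0$ on, so agreement at $d$ is not itself what $D$ protects; the correct contradiction is that agreement at $d$ at time $D^-$ would make the arrow's outcome identical in both processes, so the rightmost positions would not separate at $D$.

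What closes this hole in the paper is a last-time argument that is absent from your proposal: let $t^*$ be the \emph{last} time before $D$ at which $d$ is occupied in $(\xi^c_t)_{t\geq 0}$, if such a time exists. At $t^*$ there is a healing mark; by maximality of $t^*$ and attractiveness ($\xi^{\delta_x^-}_s\lesssim \xi^c_s$, so occupation in the hostile process implies occupation in the $c$-process), the site $d$ is occupied in neither process on $(t^*,D)$; and the state of a never-occupied site evolves only through healing marks and, for Spont, spontaneous blocking marks, both of which act identically on the two processes. Hence the states at $d$ would agree at $D^-$, a contradiction; so $d$ was never occupied in either process before $D$. Once that is established, the same ``identical evolution of an unoccupied site'' observation shows that \emph{any} healing mark at $d$ before $D$ would force agreement at $D^-$, so no healing mark ever occurred, which is the claim. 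In short, your proposal reproduces the paper's case analysis for the first half, but the heart of the proof --- excluding prior occupation of the target via the maximal time $t^*$, which is precisely what legitimizes the assertion ``after a heal the evolutions coincide up to $D$'' --- is missing, and without it the argument does not go through.
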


\begin{proof}
    First note that~$D$ must be a moment of change in the position of the rightmost particle for at least one of the two processes, and let $d-1=r_{D^-}(c)=r_{D^-}^-$ be the last site where their rightmost occupied site agreed on both processes. This change can be due to one in between the following two reasons:

    \begin{enumerate}
        \item Either there exists a healing mark at~$d-1$ at moment~$D$. 
        \item Or there exists a reproduction arrow from $(d-1,D)$ to $(d,D)$.    
        \end{enumerate}

In the first case, due to Lemma~\ref{AgreeInsideInterval}, their rightmost particle at moment~$D$ would be the same as it would belong to a region both processes agreed on~$D^-$, which contradicts the definition of $D$.

In the second case, we have that the rightmost particle successfully managed to jump to the right in one process but not on the other. Note that this is only possible if the destination site was empty in the process started from~$c$ but blocked in the process started from~$\delta_x^{-}$, since the other way around is not possible due to attractiveness.

Let~$\epsilon>0$ (depending on $\mathcal H$) be such that~$r_s^{-}=r_s(c)=d-1$ for all~$s\in [D-\epsilon,D)$. Suppose that there exists~$t^*\in [0,D-\epsilon]$ the last time this site $d$ was occupied in~$(\xi_t^c)_{t\geq 0}$ (in particular, note that~$t^*$ is not arbitrarily close to~$D$). Let us prove that such
$t^*$ cannot exist. Since~$t^*$ was the last time the site~$d$ was occupied by the process started from~$c$, it must be the case that there exists a healing mark in~$(d,t^*)$; and, in between times~$t^*$ and~$D^-$, this site was not occupied in none of the process; indeed, due to attractiveness, if it was occupied in the process started from~$\delta_x^{-}$ it would also be occupied in the process started from~$c$. Therefore, the state of the site~$d$ at time~$D$ depends only on the sequence of healing and blocking marks in the time gap between~$t^*$ and~$D$, and this is the same for both processes as we have constructed them using the same graphical construction. Thus,~$d$ at time~$D$ is on the same state for both processes, which contradicts the definition of~$D$. Therefore,~$t^*$ does not exist and the destination site has never been occupied before in any of those two processes.

Thus,~$D$ is a moment where the rightmost particle of both processes attempts to jump to a site that has never been occupied by either of these processes and that site was blocked in $(\xi_t^{{\delta_x^-}})_{t\geq 0}$ but empty in $(\xi_t^{c})_{t\geq 0}$. Finally, we can conclude that this site has never encountered a healing mark. 
\end{proof}

\subsection{Proof of Propositions \ref{FPossibleAndSmallCluster} and~\ref{Prop_Time_Between_ren}} \label{SubPropSpont}

From the previous lemma, we have that the position of the rightmost particle of the processes~$(\xi_t^c)_{t\geq 0}$ and~$(\xi_t^{\delta_x^-})_{t\geq 0}$ can only differ if they eventually attempt to jump to a site that has never encountered a healing mark before. To prevent this from happening so that the special property can be realised, we will look for some good event under which this cannot occur and therefore the position of their rightmost particle would coincide at all times.

For $x\in \mathbb{Z}$ and $t\in[0,\infty)$, we define  
\begin{equation}\label{def_Htx}
    H_t(x) := \inf\{y\in \mathbb{Z}\,:\,y> x\text{ and }U^y\cap[1,t]=\emptyset\}
\end{equation}
the position at time $t$ of the first site on the right of the site $x$ that has never seen a healing mark in the interval $[1,t]$. In the following lemma, we control the position of this site over time.

\begin{lem} \label{sitesWellBehavedHealingMark}
Let $x\in \mathbb{Z}$ and $t\in[0,\infty)$. Then, for any $\alpha\in[0,\infty)$ one has:

\begin{enumerate}
    \item for any $\epsilon>0$, there exists~$L_1=L_1(\epsilon)\in [0,\infty)$ large enough such that:
    \begin{equation*}
        \mathbb{P} (H_t(x) > x-L_1+4\alpha t \text{ for all }t\geq 1)>1-\epsilon,
    \end{equation*}
    \item  there exists a constant $C$ (independent of~$x$) such that for any~$t_0\in[0,\infty)$:
    \begin{equation*}
        \mathbb P (H_t(x) \leq x+ 4\alpha t \text{ for some }t\geq t_0)\leq Ce^{-t_0/2}.
    \end{equation*}
\end{enumerate}
\end{lem}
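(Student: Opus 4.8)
The plan is to reduce both estimates to a clean question about a single family of i.i.d.\ exponential variables, and then dispatch each part by a union bound \emph{over sites}. By translation invariance of the Harris construction I may assume $x=0$, so that $H_t(0)=\inf\{y\geq 1: U^y\cap[1,t]=\emptyset\}$. For each site $y\geq 1$ set $E_y:=\inf\bigl(U^y\cap[1,\infty)\bigr)-1$, the waiting time measured from time $1$ until the first healing mark at $y$. Since the $U^y$ are independent rate-$1$ Poisson processes, the $(E_y)_{y\geq 1}$ are i.i.d.\ with $\mathbb{P}(E_y>s)=e^{-s}$ for $s\geq 0$, and $\{U^y\cap[1,t]=\emptyset\}=\{E_y>t-1\}$. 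Hence $H_t(0)=\inf\{y\geq 1: E_y>t-1\}$, and for any level $w$ we have $\{H_t(0)\leq w\}=\{\exists\, 1\leq y\leq w: E_y>t-1\}$. The case $\alpha=0$ is immediate, since $H_t(x)>x$ always makes the first event certain and the second impossible, so I assume $\alpha>0$.

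For the first claim I bound the complementary (``bad'') event $B=\{\exists\, t\geq 1: H_t(0)\leq -L_1+4\alpha t\}$. The key move, which replaces a supremum over the continuum of times by a countable union, is to split $B$ according to the site $y$ that witnesses it: a given $y\geq 1$ can witness $B$ only if there is a $t$ with $t\geq (y+L_1)/(4\alpha)$ and $E_y>t-1$, which (once $L_1\geq 4\alpha$, so the threshold exceeds $1$) forces $E_y>(y+L_1)/(4\alpha)-1$. A union bound then gives $\mathbb{P}(B)\leq \sum_{y\geq 1} e^{\,1-(y+L_1)/(4\alpha)}$, a geometric series equal to a constant (in $\alpha$) times $e^{-L_1/(4\alpha)}$. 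Choosing $L_1=L_1(\epsilon)$ large makes this smaller than $\epsilon$.

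The second claim uses the same site decomposition applied to $B'=\{\exists\, t\geq t_0: H_t(0)\leq 4\alpha t\}$. Here a site $y$ can witness $B'$ only if $E_y>\max(t_0,\,y/(4\alpha))-1$. Splitting the union bound at $y=4\alpha t_0$: the ``near'' sites $y\leq 4\alpha t_0$ each contribute at most $e^{-(t_0-1)}$ and number at most $4\alpha t_0$, giving a total bounded by a constant times $t_0 e^{-t_0}$; the ``far'' sites $y>4\alpha t_0$ contribute a geometric tail bounded by a constant times $e^{-t_0}$. Using $t_0 e^{-t_0}\leq (2/e)\,e^{-t_0/2}$ and $e^{-t_0}\leq e^{-t_0/2}$, both pieces are absorbed into $Ce^{-t_0/2}$ for a constant $C$ depending only on $\alpha$ (hence independent of $x$); enlarging $C$ if needed covers small $t_0$ where the bound exceeds $1$.

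The only genuine subtlety, and the step I would flag as the crux, is exactly this translation of a statement quantified over all (continuum-many) times $t$ into a countable union over the sites $y$, each governed by the single exponential variable $E_y$; this is what makes the ``for all $t$'' and ``some $t\geq t_0$'' quantifiers tractable and sidesteps any measurability or continuity concerns about $t\mapsto H_t(x)$. Everything else is elementary exponential-tail and geometric-sum bookkeeping. I would double-check the boundary conditions, namely the requirement $L_1\geq 4\alpha$ in the first part (ensuring every relevant threshold exceeds $1$) and the split point $4\alpha t_0$ in the second, to be sure the per-site witnessing conditions are stated correctly.
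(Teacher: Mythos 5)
Your proof is correct, and it takes a genuinely different route from the paper's. The paper discretizes time: it introduces the events $\mathcal{A}_n=\{H_n(x)>x+4\alpha(n+1)\}$ for integer $n$, uses monotonicity of $t\mapsto H_t(x)$ to pass from integer times to all $t\in[n,n+1]$, bounds each $\mathbb{P}(\mathcal{A}_n^c)$ by a union bound over the roughly $4\alpha(n+1)$ sites in the relevant window (each contributing an exponential tail $e^{-(n-1)}$), and then sums over $n\geq n_0$ (for part (1), with $L_1=4\alpha(n_0+1)$) or over $n\geq\lfloor t_0\rfloor$ (for part (2)). You instead decompose the bad event exactly by the witnessing site $y$: for fixed $y$, the union over the continuum of times collapses, by monotonicity of $\{E_y>t-1\}$ in $t$, to the single event that the exponential variable $E_y$ exceeds the earliest admissible witnessing time, and one union bound over sites finishes. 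The ingredients are the same (independent exponential gaps plus union bounds), but your organization replaces the paper's double sum (over time blocks, then sites within each block) by a single sum over sites, gives an exact identity for the bad event -- which also disposes of the measurability of the uncountable-time event, a point the paper leaves implicit -- and yields as a by-product explicit exponential decay in $L_1$ in part (1), where the paper only extracts that the tail sum can be made smaller than $\epsilon$. The boundary conditions you flagged ($L_1\geq 4\alpha$ so that every witnessing threshold exceeds $1$, the split at $y=4\alpha t_0$, and enlarging $C$ to absorb small $t_0$) are exactly the right ones and are handled correctly; note only that your constants, like the paper's, depend on $\alpha$, which is harmless since $\alpha$ is fixed and the lemma only requires independence of $x$.
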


\begin{proof}
    Let $\alpha$ and $\epsilon>0$ be given. We start by defining a sequence of auxiliary events in the following way. For any~$n\in \mathbb{N}$, let~$\mathcal{A}_n := \{{H}_n(x) > x+4\alpha(n+1)\}$. First, we observe that~$\mathcal A_n\subset\{{H}_n(x)  > x-L+4\alpha(n+1)\}$ for any~$L\in [0,\infty)$. We also have that~$\mathcal{A}_n\subset \{{H}_s(x)  > x-L+4\alpha s \text{ for }s\in [n,n+1]\}$ since~${H}_s(x) \geq {H}_n(x) $ for all~$s\geq n$. We now bound~$\mathcal{A}_n^c$ in the following way: 

\begin{align*}
{\mathbb{P}}(\mathcal{A}_n^c)
&\leq \mathbb{P}\left({H}_n(x) \leq x+4\alpha (n+1)\right)\\
&\leq \mathbb{P}\left(\bigcup_{y=x}^{x+\lceil 4 \alpha (n+1) \rceil}U^y\cap[1,n]=\emptyset\right) \\
&\leq \mathbb{P}\left(\bigcup_{y=x}^{\lceil 4\alpha (n+1) \rceil} \{X_y \geq n-1\} \text{ where } X_y \overset{\text{i.i.d.}}{\sim} \text{Exp}(1)\right) \\
&\leq \left(\lceil 4\alpha (n+1) \right) \mathbb{P}(X_1 \geq n-1) \\
& \leq C\left(\lceil 4 \alpha (n+1) \rceil\right) e^{-n} \leq C\cdot 8\alpha \cdot e^{-n/2}.
\end{align*}

Let~$n_0\in \mathbb{N}$ be large enough so that
\[
{\mathbb{P}}\left(\bigcup_{n \geq n_0}\mathcal{A}_n^c\right)\leq \sum_{n\geq n_0}C\cdot 8\alpha\cdot e^{-n/2}\leq  \epsilon.\] 

Finally, let $L_1=4\alpha(n_0+1)$. Then, one has
\[H_s(x)\geq x+1\geq x-L_1+ 4\alpha(n_0+1) \geq x-L_1+4\alpha s\text{ for all }s\in [1,n_0+1].
\]
Thus, with this choice of~$L_1$, the event~$\bigcap_{n\geq n_0}\mathcal{A}_n$ implies~$H^x(t)\geq x-L+4\alpha t$ for all~$t\geq 1$, and this event has probability larger than~$1-\epsilon$. This concludes (1).

In order to prove (2), we proceed in a similar manner. Indeed, let~$t_0\in [0,\infty)$ be given. Since~$\bigcap_{n\geq \lfloor t_0\rfloor}\mathcal{A}_n$ implies the event where~$\{H_s(x)>x+4\alpha s \text{ for all }s\geq \lfloor t_0\rfloor\}$, one has that: 
\begin{align*}
    &\mathbb{P} \left( H_t(x) \leq x + 4\alpha t \text{ for some } t \geq t_0\right)
    \leq \mathbb{P} \left( \bigcup_{n \geq \lfloor t_0 \rfloor} \mathcal{A}_n^c \right) \leq \sum_{n \geq \lfloor t_0 \rfloor} C\cdot 8\alpha\cdot e^{-n/2}
    \leq C' e^{-t_0/2},
\end{align*}
which concludes the proof of (2).  \qedhere

\end{proof}

We will need to control how far the rightmost particle of the Spont process could have been; in order to do that, we will use a comparison with the classical contact process.

\begin{lem} \label{rightmostBehavedClassicalCP}
    Consider~$(\zeta_t^{h_x})_{t\geq 0}$ a classical contact process started from the Heaviside configuration~$h_x=\mathds{1}_{(-\infty, x]}$. Let~$\alpha=\alpha(\lambda)$ be the speed of the contact process as in~\eqref{speedCP}. Let~$\bar{R}_t^{h_x}=\sup\{r(\zeta_s^{h_x})\,:\,s\in[0,t]\}$. Then, the following is true:

    \begin{enumerate}
        \item for all~$\epsilon>0$ there exists~$L_2\in [0,\infty)$ (depending only on~$\epsilon$) such that:
        \begin{equation}
            \mathbb{P}( \bar{R}_t^{h_x}<x+L_2+3\alpha t \text{ for all }t\geq 0)>1-\epsilon,
        \end{equation}
        \item there exist constants $c,C>0$ (independent of~$x$) such that for any~$t_0\in [0,\infty)$:
        \begin{equation}
            \mathbb{P}( \bar{R}_t^{h_x} \geq x + 3 \alpha t\text{ for some }t\geq t_0) \leq Ce^{-c{t_0}}.
        \end{equation}
    \end{enumerate}
Moreover, if we assume that $\mathbb P( \mathcal{S}_{\xi}(\delta_{x}^{-}))>0$ then the previous inequalities are true for the conditioned measure $\mathbb P(\cdot \mid \mathcal{S}_{\xi}(\delta_{x}^{-})).$
\end{lem}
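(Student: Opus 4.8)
The plan is to control the running maximum $\bar{R}_t^{h_x}$ of the rightmost particle of the classical contact process started from the Heaviside configuration by reducing everything to the known linear growth~\eqref{speedCP}. The key point is that the rightmost particle of the contact process started from $h_x = \mathds{1}_{(-\infty,x]}$ is, by additivity and the graphical construction, dominated by (in fact related to) the rightmost particle of a contact process started from a single occupied site. By translation invariance we may take $x=0$. The linear speed result~\eqref{speedCP} gives $r(\zeta_t)/t \to \alpha$ almost surely conditioned on survival, and since $h_0$ contains infinitely many occupied sites, the process started from $h_0$ survives almost surely; thus $r(\zeta_t^{h_0})/t \to \alpha$ almost surely, with no conditioning needed.

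First I would establish quantitative large-deviation control on $r(\zeta_t^{h_0})$. The strategy is to produce an exponential tail bound of the form $\mathbb{P}(r(\zeta_t^{h_0}) \geq x + 3\alpha t) \leq C e^{-ct}$ for each fixed $t$, using the fact that $3\alpha > \alpha$, so we are asking the rightmost particle to travel strictly faster than its typical speed. Such estimates are classical and follow from the subadditive/exponential-moment machinery for the contact process (for instance by comparing with a sum of i.i.d.\ increments coming from Kuczek-type break points, or directly from the large deviation estimates for the right edge). For the running maximum I would first bound $\bar{R}_t^{h_0}$ by a union/sum over integer times: since between consecutive integer times the rightmost particle can only advance by a controlled amount (the number of transmission arrivals in a unit interval near the front has exponential tails), I reduce the continuous-time supremum to a discrete one and apply the per-time estimate.

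For part (2), I would sum the per-time exponential bound over $t \geq t_0$: writing
\begin{equation*}
\mathbb{P}(\bar{R}_t^{h_x} \geq x + 3\alpha t \text{ for some } t\geq t_0) \leq \sum_{n \geq \lfloor t_0\rfloor} \mathbb{P}\Big(\sup_{s\in[n,n+1]} r(\zeta_s^{h_x}) \geq x + 3\alpha n\Big),
\end{equation*}
and each summand is bounded by $C e^{-cn}$ using the large deviation estimate together with control of the unit-interval fluctuation, yielding a geometric series summing to $C' e^{-c t_0/ 2}$ (after possibly adjusting constants). For part (1), the event $\{\bar{R}_t^{h_x} \geq x + L_2 + 3\alpha t \text{ for some } t\}$ is handled by the same tail bound: choosing $L_2$ large absorbs the small-time behaviour (where the front has not yet had time to deviate), and the tail sum $\sum_n C e^{-cn}$ can be made smaller than $\epsilon$ by shifting the threshold by $L_2$; more precisely I would split into $t \leq T_0$ and $t > T_0$, make the large-$t$ contribution small via summability and the small-$t$ contribution empty by taking $L_2$ larger than the maximal displacement up to $T_0$ (which is finite almost surely and whose distribution has exponential tails).

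The main obstacle I anticipate is obtaining the clean exponential large-deviation bound for the right edge traveling faster than speed $\alpha$, since~\eqref{speedCP} as stated is only an almost-sure and $L^1$ convergence statement, not a tail estimate. To upgrade it, I would invoke the break-point construction of~\cite{kuczek1989central}: conditioned on survival, the trajectory of the rightmost particle decomposes into i.i.d.\ increments with exponential moments, so that $r(\zeta_t^{h_0})$ is a sum of i.i.d.\ terms of mean $\alpha$ up to a controlled number of renewals, and a standard Cramér-type bound gives the desired $C e^{-ct}$ tail for the event of exceeding $3\alpha t$. The final sentence about the conditioned measure $\mathbb{P}(\cdot \mid \mathcal{S}_\xi(\delta_x^-))$ follows immediately: since the stated inequalities bound probabilities of events concerning only the contact process $\zeta^{h_x}$, dividing by $\mathbb{P}(\mathcal{S}_\xi(\delta_x^-)) > 0$ changes the constant $C$ by at most a fixed factor $1/\mathbb{P}(\mathcal{S}_\xi(\delta_x^-))$, preserving the exponential form.
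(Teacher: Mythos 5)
Your proposal is correct, and for part (1) and the conditioning step it follows essentially the paper's route: a split into small and large times, with the small-time contribution absorbed by a Poisson-type bound on the number of transmission arrivals near the front, and the conditioned-measure claim obtained by dividing by $\mathbb{P}(\mathcal{S}_\xi(\delta_x^-))>0$ (for part (1) this means choosing $L_2$ for the smaller tolerance $\epsilon\,\mathbb{P}(\mathcal{S}_\xi(\delta_x^-))$, which your argument implicitly covers). The genuine difference is in part (2). The paper proves no large-deviation estimate at all: it quotes Theorem 4 of Durrett and Griffeath~\cite{durrett_griffeath1983supercritical}, which already gives the uniform-in-time statement $\mathbb{P}(\exists t\geq t_0,\; r(\zeta_t^{h_x})\geq x+3\alpha t)\leq Ce^{-ct_0}$ for the Heaviside edge, so no discretization, unit-interval fluctuation control, or geometric summation is needed; the large-time term in part (1) is then handled even more softly, directly from the almost-sure speed statement (the events $\{\exists t\geq t_0 : r(\zeta_t^{h_x})\geq x+3\alpha t\}$ decrease to a null set as $t_0\to\infty$). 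Your route buys a more self-contained derivation, but be aware that its key input --- the per-time exponential bound --- is precisely the nontrivial content, and obtaining it ``from Kuczek break points'' is somewhat circular: Kuczek's construction is for oriented percolation, and the exponential tails of the analogous break-point increments for the contact process are themselves consequences of Durrett--Griffeath-type edge estimates. The clean version of your argument is therefore your alternative suggestion of invoking the right-edge large deviation estimates directly, which is exactly what the paper does, and in the stronger ``for some $t\geq t_0$'' form that spares you the discretization step entirely.
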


\begin{proof}
    The results come from linearity estimates obtained by Durrett and Griffeath~\cite{durrett_griffeath1983supercritical}. We first prove~$(1)$. Let~$\epsilon>0$ be given. Since~$r(\zeta_t^{h_x})/t\rightarrow\alpha$ almost surely as~$t\rightarrow\infty$, we can find~$t_0\in[0,\infty)$ large enough so that~$\mathbb{P}(r(\zeta_t^{h_x})\geq 3\alpha t \text{ for some }t\geq t_0)\leq \frac{\epsilon}{2}$. Let~$L$ be large enough so that the probability of a Poisson random variable with parameter~$\lambda t_0$ being larger than~$L$ is smaller than~$\frac{\epsilon}{2}$. We then have that
    \begin{align*}
    \mathbb{P}(\exists t\geq 0, ~r(\zeta_t^{h_x})\geq x+L+3\alpha t) &\leq 
        \mathbb{P}(\exists t\in[0,t_0],~ r(\zeta_t^{h_x})\geq x+L+3\alpha t)\\ 
        &~~+ \mathbb{P}(\exists t\geq t_0, ~r(\zeta_t^{h_x})\geq x+L+2\alpha t )\\
        &\leq \frac{\epsilon}{2}+\frac{\epsilon}{2},
    \end{align*}
by the choices of~$L$ and $t_0$. Thus,
\[
\mathbb P(\forall t\geq 0,\bar{R}_t^{h_x} <x+L +3\alpha t)=\mathbb{P}(\forall t\geq 0, r(\zeta_t^{h_x})< x+L+3\alpha t)<1-\epsilon.
\]

 For the second point, we directly use the result of Theorem 4 in~\cite{durrett_griffeath1983supercritical}:
 \[
\mathbb P(\exists t\geq t_0,\bar{R}_t^{h_x} \geq x+3\alpha t)=\mathbb{P}(\exists t\geq t_0, r(\zeta_t^{h_x})\geq  x+3\alpha t)\leq Ce^{-ct_0}.
\]
\end{proof}

Finally, we will need a result regarding the extinction time of a Spont process. In~\cite{garetmarchand2014bacteria}, they authors introduced a bacteria model in which the particles can be of type 1 or 2. Both types propagate as oriented percolation with respected parameters $p_1$ and $q_2$ but the 2's block the 1's and the 2's also appear spontaneously with parameter $\alpha$. Using a Bezuidenhout-Grimmett type construction, they proved the survival of the $1's$, a small cluster property (that is if the process of 1's dies out, it is exponentially fast) and the almost linear growth of the $1's$). Considering that our -1 are their 2's, the Spont process is a continuous version of the particular case where $q_2=0$. So we can deduce from their work the following results:
\begin{rk}[Theorem 2 of~\cite{garetmarchand2014bacteria}]\label{smallClusterSPON}
    For a Spont process with parameters~$\lambda>\lambda_c$ and~$p>\tilde{p}$ as in Remark~\ref{SupercriticalSpont}, there exists positive constant $A,B,C$ such that for every $c\in\mathcal C$ 
    \begin{align}
        \mathbb{P}(\tau_\xi(c)=\infty,~t^c(x)>C\|x\|+t)\leq Ae^{-Bt}\\
         \mathbb{P}(t<\tau_\xi(c)<\infty)\leq Ae^{-Bt}
    \end{align}
where $t^c(x)=\inf\{t\geq 0, \xi_t^c(x)=1\}$.
\end{rk}
As we have mentioned before, we are interested in the behaviour of the rightmost occupied site over time. In the previous result, $t^c(x)$ is also $\inf\{t\geq 0, r(\xi_t^c)=x\}$ so we have that the behaviour of the rightmost occupied site is at least linear.

We are finally ready to prove Proposition~\ref{FPossibleAndSmallCluster}. The idea behind the proof is the following: we will search for good events controlling the healing marks ahead of a given location and also the behaviour of the rightmost particle of the Spont process via a comparison with a classical contact process. Because healing marks appear much faster than the classical contact process moves, we are able to find an event where the situation described in Lemma~\ref{split_site_never_visited} cannot happen. This strategy is illustrated in Figure~\ref{FBIggerZero}.

\begin{figure}[h]
    \centering
    \includegraphics[width=1.0\linewidth]{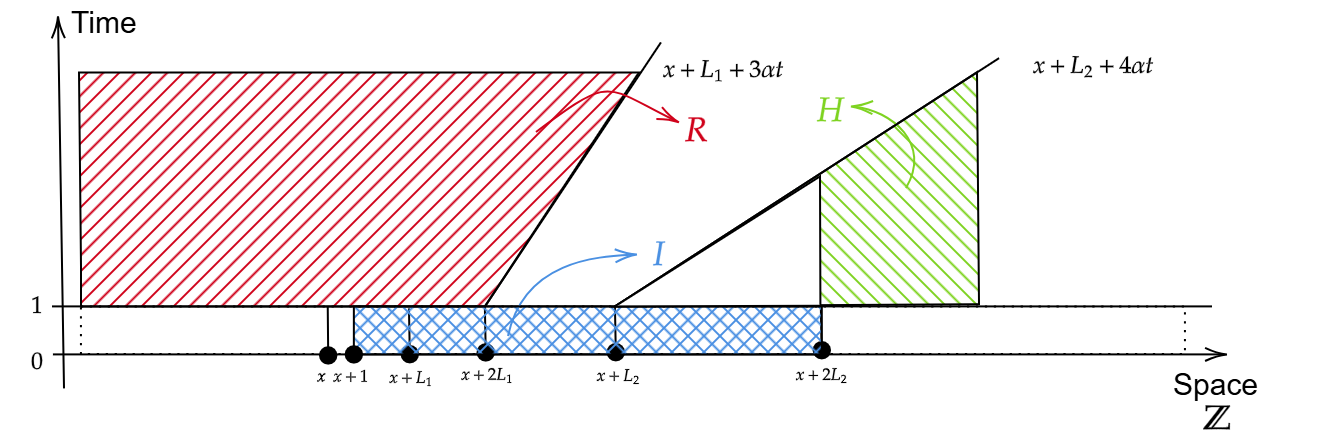}
    \caption{Illustration of argument used in Proposition~\ref{FPossibleAndSmallCluster}. The rightmost particle stays inside the red region due to event~$R$, and all the sites above the green region have seen a healing mark due to event~$H$; the beginning of the dynamics is controlled by the good event~$I$ that appears in blue, where all the sites have seen a healing mark.} 
    \label{FBIggerZero}
\end{figure}

\begin{proof} [Proof of proposition~\ref{FPossibleAndSmallCluster}]
   Consider~$(\xi_t^c)_{t\geq 0}$ a Spont process started from~$c\in\mathcal C^x$ constructed with a graphical construction~$\mathcal H$ as in~\eqref{FormOfGraphicalConstruction} and let~$\alpha=\alpha(\lambda)$ be the speed of the contact process as in~\eqref{speedCP}. 

  Let $(\zeta_{1,t}^{h_{x+1}})_{t\geq 1}$ be a classical contact process constructed with $\mathcal H[1,\infty)$ started from the Heaviside configuration~$h_{x+1}=\mathds{1}_{(-\infty,x+1]}$. For a given~$L_1\in[0,\infty)$, let~$R$ be the event where the rightmost occupied site of this auxiliary contact process is well-behaved in the following sense:
  \[
  R:=\{\forall t\geq 1, \bar{R}_t^{h_{x+1}}<x+L_1+3\alpha t\;\};\]
  using Lemma~\ref{rightmostBehavedClassicalCP} we can increase~$L_1$ in such a way that~${\mathbb{P}}(R\mid \mathcal{S}_{\xi}(\delta_{x}^{-}))>1-\epsilon$.

    Next, thanks to Lemma~\ref{sitesWellBehavedHealingMark}, let~$L_2\in[L_1,\infty)$ be large enough so that 
    \[
    H:=\{\forall t\geq 1, H_t(x+2L_2)>x+L_2+4\alpha t\;\}\]
    satisfies~$\mathbb{P}(H)>1-\epsilon$.
    
    Finally, let~$I$ be the event where nothing happens at $x$ during 1 unit of time an every site between $x+1$ and $x+2L_2$ see a healing mark during this time interval:
    \[
    I=\left\{N_1^{x,x+1}\cap[0,1]=N_1^{x,x-1}\cap [0,1]=U^x\cap [0,1]=\emptyset\right\} \cap \left\{\forall y\in [x+1,x+2L_2],  U^y\cap[0,1]\neq \emptyset \right\}
    \]

    We claim the following:~$I\cap R\cap H\cap \mathcal{S}_{\xi}(\delta_{x}^{-})\subset \{F(\xi,c)=\infty\}$. Indeed, the survival property is verified trivially because~$\xi_t^{\delta^-_x}\lesssim \xi_t^c$ for all~$t\geq 0$ and for any~$c\in\mathcal C^x$. It remains to check the invariance property. Because of~$I$, we have that this condition is verified in the time gap~$[0,1]$. To check that is also verified for all future times, we proceed as follows. First, note that~$\bar{r}_t(c)=\sup\{r(\xi_s^c)\,:\,s\in[1,t]\}$ is such that~$\bar{r}_t(c)\leq \bar{R}_t^{h_{x+1}}$ because~$\xi_t^c\lesssim \zeta_{1,t}^{h_{x+1}}$. Moreover, on~$R\cap H$, we have that, for all~$t\geq 1$, 
    \[
    \bar{R}_t^{h_{x+1}}<x+L_1+3\alpha t\leq x+L_2+4\alpha t < H_t(x+2L_2).
    \]
    Therefore, for all~$t\geq 1$, the site~$\bar{r}_t+1$ possesses a healing mark on the time interval~$[0,t]$. Because of Lemma~\ref{split_site_never_visited}, this implies the invariance property. 

   Let $\mathcal{S}_{\xi}^1(\delta_{x}^{-})=\{ \forall t\geq 1, \exists y\in\Z, \xi_t^{\delta_x^-}(y)=1\}$ the survival event after time 1. We have that:
    \begin{align*}
        \mathbb{P}(I\cap R\cap H\cap \mathcal{S}_{\xi}(\delta_{x}^{-}))
        & =  \mathbb{P}( I\cap R\cap H\cap \mathcal{S}^1_{\xi}(\delta_{x}^{-}))\\
        &=  \mathbb{P}( I)\times \mathbb{P}(R\cap H\cap \mathcal{S}^1_{\xi}(\delta_{x}^{-}))\\
        &= \mathbb{P}( I)\times  \mathbb{P}( H\cap \mathcal{S}^1_{\xi}(\delta_{x}^{-})| R) \times  \mathbb{P}(R)\\
         &= \mathbb{P}( I)\times  \mathbb{P}( H| R) \times \mathbb{P}( \mathcal{S}^1_{\xi}(\delta_{x}^{-})| R) \times  \mathbb{P}(R)\\
         &= \mathbb{P}( I)\times  \mathbb{P}( H) \times \mathbb{P}( R|\mathcal{S}^1_{\xi}(\delta_{x}^{-})) \times  \mathbb{P}(\mathcal{S}^1_{\xi}(\delta_{x}^{-})).\\
    \end{align*}    
The first equality is obtained because $I$ ensures the survival until time 1, the second one uses the fact that $I$ and $R\cap H\cap \mathcal{S}^1_{\xi}(\delta_{x}^{-})$ depend on disjoint parts of the graphical construction (before and after time 1); the fourth uses the fact that, conditioned on $R$, $H$ and $ \mathcal{S}^1_{\xi}(\delta_{x}^{-})$ depend on disjoints parts of the graphical configuration (the red one and the green one in Figure~\ref{FBIggerZero}) and the last one uses the same fact for $H$ and $R$. Finally we obtain the product of four terms with probability strictly positive.    
    This concludes the proof of~$\mathbb{P}(F(\xi_\cdot,c)=\infty)>0$. 
    
    Now let~$t_0\in[0,\infty)$; we aim to show that~$\mathbb P(t_0<F<\infty)\leq Ce^{-ct_0^p}$. It is enough to show that this holds for all~$t_0\geq 1$, and we do so. Let~$H(t_0),R(t_0)$ be the good events defined by    
    \begin{align*}
    H(t_0):=&\{\forall t\geq t_0,~ H_t(x)> x +4\alpha t\,\},\\
    R(t_0):=&\{\forall t\geq t_0,~ \bar{R}_t(\zeta_t^{h_{x+1}})<x+3\alpha t\}.
    \end{align*}
    
    By the same argument we have used before, we have the following inclusion:
   \[
    \mathcal{S}_{\xi}(c)\cap \mathcal{P}(\xi_{\cdot},c,[0,t_0])\cap H(t_0)\cap R(t_0)\subset \{F(\xi_{\cdot},c)=\infty\}.
\]
Therefore, we have

    \begin{align*}
        \mathbb{P}(t_0<F(\xi_{\cdot},c)<\infty) &= \mathbb{P}( \mathcal{P}(\xi_{\cdot},c,[0,t_0]) \cap F(\xi_{\cdot},c)<\infty)
        \\
        &\leq \mathbb{P}\left( \mathcal{P} (\xi_{\cdot},c,[0,t_0]) \cap \mathcal{S}_{\xi}(c)^c\right)+\mathbb{P}\left( \mathcal{P} (\xi_{\cdot},c,[0,t_0]) \cap H(t_0)^c\right)\\
        &~~~~+\mathbb{P}\left( \mathcal{P} (\xi_{\cdot},c,[0,t_0]) \cap R(t_0)^c\right)
        \\
        &\leq \mathbb{P}(t_0<\tau_{\xi}(c)<\infty)+\mathbb{P}\left(H(t_0)^c\right) +\mathbb{P}\left(R(t_0)^c\right).
    \end{align*}

    The first term has the desired exponential decay due to Proposition~\ref{smallClusterSPON}; the second, due to Lemma~\ref{sitesWellBehavedHealingMark} and the third due to Lemma~\ref{rightmostBehavedClassicalCP}.

We then observe how those results transfer for the conditioned probability measure. Note that, since~$\{F(\xi_\cdot,c)=\infty\}\subset\mathcal{S}_\xi(c)$, ${\mathbb P}(F(\xi_\cdot,c)=\infty\mid {\mathcal S_\xi(c)})>0$. 

Moreover,~${\mathbb P}(t<F(\xi_\cdot,c)<\infty\mid {\mathcal S_\xi(c)}) \leq \frac{\mathbb P(t<F(\xi_\cdot,c)<\infty)}{\mathbb P(\mathcal{S}_\xi(c))}$, and therefore the same sub-exponential bound also holds for this conditioned probability measure.

\end{proof}

At last, we prove Proposition~\ref{Prop_Time_Between_ren}.

\begin{proof}[Proof of Proposition~\ref{Prop_Time_Between_ren}]
    We first observe that the proof of Proposition~\ref{Prop_Time_Between_ren} would follow exactly as the proof of Corollary~\ref{CorSpontRenTimes} if we could show the same results of Proposition~\ref{FPossibleAndSmallCluster} for the measure~$\mathbb{P}(\cdot\mid \sigma_0=0)$. Indeed, it would be enough to show that~$\mathbb{P}(F^1_1(\xi_\cdot,c)=\infty\mid\sigma_0=0)>0$ and the sub-exponential bound of the tail of $F_1^1$ defined as in~\eqref{def_Generic_F}. 

    To do so, we define the following events:
    \begin{align*}
        I'&=\left\{ N_1^{x,x+1}\cap[0,2]=N_1^{x,x-1}\cap[0,2]=U^x\cap[0,2]=\emptyset\right\} \\
        &~~~~~~\cap \left\{ U^y\cap[0,2]\neq \emptyset~\forall y\in\{x+1,\dots,x+2L_2\}\right\} \\
        R'&=\left\{ \forall t\geq 2,~r(\zeta_{2,t}^{h_x})_{t\geq 2} \leq x+L_1+3\alpha t  \right\} \\
        H'&=\{ \forall t\geq 2,~H_t(x+2L_2)>x+L_2+4\alpha t \}
 \end{align*}
where~$L_2$ is given as in the proof of Proposition~\ref{FPossibleAndSmallCluster} and $(\zeta_{2,t}^{h_x})_{t\geq 2}$ is a contact process constructed with~$\mathcal H[2,\infty)$. Then, by Lemmas~\ref{InferenceProperty} and~\ref{split_site_never_visited}, we have the following inclusion 
\[
I'\cap R'\cap H'\cap \tilde{\mathcal{S}}_\xi(\delta_x^-)\subset \{F_1^1(\xi_\cdot,c)=\infty\}\cap \{\sigma_0=0\}
\]
where~$\tilde{\mathcal{S}}_\xi(\delta_x^-)$ is the event where the Spont process $(\xi_{1,t}^{\delta_x^-})_{t\geq 0}$ survives. With the same argument than before, $\mathbb P(I'\cap R'\cap H'\cap \tilde{\mathcal{S}}_\xi(\delta_x^-))>0$ so $\mathbb{P}(F^1_1(\xi_\cdot,c)=\infty\mid\sigma_0=0)>0$.

    The sub-exponential bound would hold since~$\mathbb{P}(t<F_1^1(\xi_\cdot,c)<\infty\mid\sigma_0=0) \leq \tilde{c}\mathbb{P}(t<F_1^1(\xi_\cdot,c)<\infty)$, and we can bound this term in a similar way that we have done to conclude item~$(2)$ of Proposition~\ref{FPossibleAndSmallCluster}.

\end{proof}

\subsection{Conditions for discrepancies in IS rightmost positions}

\paragraph{} We want to answer the same question we have asked ourselves before in Section~\ref{SubPropSpont} but for the contact process with inherited sterility: starting from two different initial configurations in~$\mathcal C^x$, what could happen so that their rightmost fertile individual moves to two distinct positions? The difficulty arises from the fact that in the case of inherited sterility, due to the lack of attractiveness, we do not have a “worst-case” configuration to refer to (as was the case with $\delta_x^-$ for the Spont process). We aim to argue that they would only differ when they attempt to visit a site that started blocked in one of the two processes but empty in the other and that has never seen a healing mark before that attempt moment (in particular, it has never been previously occupied by a fertile individual in either of both processes). To prove so, we first need one auxiliary definition.

\begin{definition} \label{gammaOnTheLeft}
    Let~$\gamma_1,\gamma_2:I\rightarrow \mathbb Z$ be two distinct infection paths (that can be active or not) defined on~$I$ such that~$i=\inf I\in I$. We denote~$d(\gamma_1,\gamma_2)=\inf\{t\in I\,:\,\gamma_1(t)\neq \gamma_2(t)\}$ be the first moment where they differ. If~$\gamma_1(i)<\gamma_2(i)$ or if~$\gamma_1(i)=\gamma_2(i)$ and~$\gamma_1(d)<\gamma_2(d)$, we say that~$\gamma_1$ is on the left of~$\gamma_2$.  
\end{definition}

\begin{lem} \label{RegionTheyAgree}
    Let~$x\in \mathbb{Z}$ and let~$c_1,c_2\in \mathcal{C}^x$. Let~$(\eta_t^{c_1})_{t\geq 0}$ and~$(\eta_t^{c_2})_{t\geq 0}$ be two contact processes with inherited sterility and let~$(\xi^{\delta_x^-}_t)_{t\geq 0}$ be a Spont process. Let $t\in [0,\infty)$ such that:
    \begin{enumerate}
        \item there exists $y\in \mathbb{Z}$ such that $\xi^{{\delta_x^-}}_{t}(y)=1$,
        \item $r_s({c_1})=r_s({c_2})$ for all $s\in [0,t]$.
    \end{enumerate}

    Let $\gamma_{\mathrm{left}}^{t}$ be the leftmost active infection path for the process $(\xi_t^{{\delta_x^-}})_{t\geq 0}$ defined on $[0,t]$ in the sense of Definition~\ref{gammaOnTheLeft}; i.e., if there exists~$\gamma:[0,t]\rightarrow\mathbb{Z}$ such that~$\gamma(t)=\ell_t^-$ and~$\gamma\neq \gamma_{\mathrm{left}}^{t}$, then~$\gamma_{\mathrm{left}}^{t}$ is on the left of~$\gamma$. Then, almost surely, the processes $(\eta_t^{c_1})_{t\geq 0}$ and $(\eta_t^{c_2})_{t\geq 0}$ agree on the state of every site in the region $\mathcal{R}\subset\mathbb{R}^2$ delimited by:
\begin{align*}
    \mathcal{R} 
    &= \{(y,s)\in \mathbb{R}^2\,:\, s\in [0,t] \text{ and } y\in [\gamma^t_{\text{left}}(s), r_s(c_1)]\}. 
 \end{align*}
\end{lem}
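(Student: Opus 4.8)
The plan is to argue by contradiction, tracking the first space-time point inside $\mathcal{R}$ at which the two inherited sterility processes disagree, and to show that any such point would force a disagreement either earlier in $\mathcal{R}$ or on its boundary, where disagreement is impossible. So the first thing I would do is pin down the two boundaries. Since $\delta_x^-\lesssim c_1$ and $\delta_x^-\lesssim c_2$, the basic coupling of Remark~\ref{CouplingMainThm} gives $\xi_t^{\delta_x^-}\lesssim \eta_t^{c_i}$ for $i=1,2$ and all $t$. The path $\gamma^t_{\mathrm{left}}$ is an active infection path for $(\xi_t^{\delta_x^-})$ originating at $(x,0)$ (the unique initially occupied site of $\delta_x^-$), so by Lemma~\ref{ActivePathIFF} together with (P1)--(P3) the Spont process is in state $1$ at every point $(\gamma^t_{\mathrm{left}}(s),s)$; by domination both $\eta^{c_1}$ and $\eta^{c_2}$ then equal $1$ along $\gamma^t_{\mathrm{left}}$, so they agree on the left boundary. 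On the right, hypothesis~(2) gives $r_s(c_1)=r_s(c_2)=:r_s$, and both processes carry state $1$ at $r_s$, so they agree there too; at $s=0$ the slice of $\mathcal{R}$ reduces to $\{x\}$, where $c_1(x)=c_2(x)=1$.

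Next I would let $P$ be the infimum of the times at which the two processes disagree at some site of $\mathcal{R}$, and let $z_P$ be such a discrepancy site at time $P$. Because the two boundaries carry state $1$ in both processes, $z_P$ must be strictly interior, $\gamma^t_{\mathrm{left}}(P)<z_P<r_P$. A short càdlàg argument then shows that $z_P$ cannot have entered $\mathcal{R}$ through a boundary jump at time $P$: an infection path jumps by a single unit and the rightmost position can increase by at most one unit, so any site swept into $\mathcal{R}$ by a boundary jump lies on a boundary, where the processes agree. Hence $z_P\in\mathcal{R}$ just before $P$ as well, and minimality of $P$ forces $\eta_{P^-}^{c_1}(z_P)=\eta_{P^-}^{c_2}(z_P)$.

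I would then analyse the update at $(z_P,P)$. A healing mark sets $z_P$ to $0$ in both processes and cannot create a discrepancy, so the update is a transmission arrow (type $1$ or type $-1$) from a neighbour $w\in\{z_P-1,z_P+1\}$, triggering a birth precisely in the process in which $w$ is fertile. Since $z_P$ is empty in both just before $P$, the discrepancy can only come from $w$ being in state $1$ in one process, say $\eta^{c_1}$, but not the other; in particular $w$ is itself a discrepancy site on a left-neighbourhood of $P$. As $w$ is occupied in $\eta^{c_1}$ we have $w\le r_{P^-}$, so minimality of $P$ forces $w<\gamma^t_{\mathrm{left}}(P^-)$. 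Combining $w<\gamma^t_{\mathrm{left}}(P^-)$ with $w\in\{z_P\pm1\}$, $z_P\ge\gamma^t_{\mathrm{left}}(P)+1$, and the unit-jump property of $\gamma^t_{\mathrm{left}}$, the only surviving possibility is $w=z_P-1=\gamma^t_{\mathrm{left}}(P)$ with the path jumping one step left at $P$, i.e.\ $\gamma^t_{\mathrm{left}}(P^-)=z_P$. But then the Spont process is in state $1$ at $z_P$ on a left-neighbourhood of $P$, hence so are both $\eta^{c_1}$ and $\eta^{c_2}$, contradicting that $z_P$ is empty just before $P$. This contradiction shows $\mathcal{R}$ contains no discrepancy.

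The step I expect to be most delicate is the bookkeeping around boundary jumps: making rigorous that the first discrepancy site is strictly interior and already lies in $\mathcal{R}$ just before $P$, and then pinning down the single residual configuration ($w=\gamma^t_{\mathrm{left}}(P)$ with a leftward jump of the path) before eliminating it. An equivalent and perhaps cleaner way to dispose of that residual case is to observe that it would require two opposite transmission arrows between $z_P-1$ and $z_P$ at the very same instant, which has probability zero under independent Poisson clocks.
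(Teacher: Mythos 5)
Your proof is correct, and it reaches the contradiction by a genuinely different mechanism than the paper's, although the setup is the same. Both arguments establish agreement on the two boundaries (state $1$ along $\gamma^t_{\mathrm{left}}$ in both IS processes via the coupling with the Spont process, state $1$ at $r_s$ via hypothesis (2)), take a first discrepancy point strictly interior to $\mathcal{R}$, rule out healing marks, and reduce to a transmission arrow whose source $w$ is fertile in exactly one of the two processes. From there the paper goes \emph{global}: it takes an active infection path feeding $w$ in the process where $w$ is fertile, runs a crossing-path argument against $\gamma^t_{\mathrm{left}}$, and concatenates the two into an active path from $(x,0)$ to the discrepancy point that stays inside $\mathcal{R}$; minimality of $P$ then makes this concatenated path active for the other process as well, forcing the same birth in both. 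You instead stay \emph{local}: since $w$ is itself a discrepancy site just before $P$ and $w\le r_{P^-}$, minimality forces $w<\gamma^t_{\mathrm{left}}(P^-)$, and the unit-jump property of infection paths and of the rightmost position leaves only the configuration $w=\gamma^t_{\mathrm{left}}(P)$, $z_P=\gamma^t_{\mathrm{left}}(P^-)$, which you kill either by the coupling (the Spont path occupies $z_P$ just before $P$, contradicting that $z_P$ is empty in both IS processes) or by the almost-sure absence of simultaneous arrivals of the two opposite arrow processes between $z_P-1$ and $z_P$. What your route buys is that it avoids the concatenation step entirely, which is where the paper is least explicit (it does not spell out how the crossing time is chosen so that the concatenated path stays to the right of $\gamma^t_{\mathrm{left}}$ throughout); the price is the more delicate bookkeeping you flagged around region-expanding boundary jumps, which your argument handles correctly since the only sites swept into $\mathcal{R}$ at any instant are the single new left- or right-boundary site, on which the processes agree.
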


\begin{proof}
Assumption (1), together with Remark~\ref{CouplingMainThm}--which ensures that~$\xi^{\delta_x^-}_t \lesssim \eta_t^{c_1}, \eta_t^{c_2}$ for all~$t\in[0,\infty)$--implies that the random variables appearing in condition (2) are finite at all times~$s\in[0,t]$. By contradiction, assume that this is not true and let $(x_P,P)$ be the first space-time point where this property fails, with~$P\in [0,t]$. Note that the active infection path $\gamma^t_{\mathrm{left}}$ is also an active infection path for the processes~$\eta_t^{c_1}$ and~$\eta_t^{c_2}$. 
So, at the failure time~$P$, those processes agree on~$\gamma_{\mathrm{left}}^{t}(P)=: L$, and they all have value 1. By assumption~$(2)$, they also agree on~$r_P({c_1})=r_P({c_2})\equiv R$. Therefore, $\eta_P^{c_1}$ and $\eta_P^{c_2}$ agree on the boundary points of the interval~$[L,R]$ and $x_P\in (L,R)$. 

   Then, almost surely there exists~$\epsilon>0$ such that for the interval~$[L,R]$ there exists only one arrival time of~$\mathcal H$ in the region~$[L,R]\times [P-\epsilon,P]$ and this arrival time must have affected~$x_P$; otherwise, the processes would have continued to agree on this site. If it was a healing mark, this would have affected both processes and hence they would still agree on that site. Therefore, the arrival time is a transmission arrow of type either~$+1$ or~$-1$ and $\eta_{P-}^{c_1}(x_P)=\eta_{P-}^{c_2}(x_P)=0$. Without loss of generality, suppose that the switch occurs in $c_1$ so for some~$y\leq x$ there exists~$\gamma_1$ active infection path connecting~$(y,0)\overset{(\mathcal H,c_1)}{\implies}(x_P,P)$.
    
    But note that $\gamma_{\mathrm{left}}^t$ is also an active infection path for the process~$(\eta_t^{c_1})_{t\geq 0}$ as we have observed before, so by a crossing paths argument it follows that there exists~$\gamma^*$ active infection path for the process~$(\eta_t^{c_1})_{t \geq 0}$ such that~$(x,0)\overset{\mathcal H,c_1}{\implies}(x_P,P)$. Moreover,~$\gamma^*$ is on the left of the region delimited by the rightmost particle at that time, i.e.,~$\gamma^*(s)\in [\gamma_{\mathrm{left}}^t(s), r_s({c_1})]$ for all~$s\in [0,P]$. But then up until time~$t-\epsilon$, the path~$\gamma^*$ depends only on state of sites that those two processes agree on, and thus the site~$x_P$ would occupied at time~$P$ in both processes with same state (either by a fertile or a sterile individual, depending on the type of arrow). 

\end{proof}

\begin{lem}\label{split_case_CPIS}
    Let~$x\in \mathbb{Z}$ and~$c_1,c_2\in \mathcal{C}^x$. Assume that the Spont process~$(\xi_t^{\delta_x^-})_{t\geq 0}$ survives. Let~$(\eta_t^{c_1})_{t\geq 0}$ and~$(\eta_t^{c_2})_{t\geq 0}$ be two contact process with inherited sterility. Let~$D\in [0,\infty)$ be the first moment where the rightmost fertile individual of those two contact processes with inherited sterility are in a different position, i.e.:
    \begin{equation*}
        D=\inf\{t\geq 0\,:\,r_t({c_1})\neq r_t({c_2})\}.
    \end{equation*}

    Then, $D$ corresponds to a moment where the rightmost fertile individual of one of those inherited sterility processes jumped to the right and the destination site has never been visited by a fertile individual in any of those two processes before time~$D$ and this jump was successful in one process but unsuccessful in the other process. 
\end{lem}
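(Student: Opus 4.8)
The plan is to follow the scheme of the Spont argument in Lemma~\ref{split_site_never_visited}, replacing the role of attractiveness and of Lemma~\ref{AgreeInsideInterval} by the region-of-agreement statement of Lemma~\ref{RegionTheyAgree}, and taking care of the fact that in the IS dynamics a site becomes blocked only through a \emph{fertile} neighbour (whereas in Spont blocking is spontaneous). First I would note that for every $t<D$ the hypotheses of Lemma~\ref{RegionTheyAgree} are satisfied: condition~(2) holds because $r_s(c_1)=r_s(c_2)$ for all $s<D$ by definition of $D$, and condition~(1) holds because $(\xi_t^{\delta_x^-})_{t\geq 0}$ is assumed to survive. Hence the two processes $\eta^{c_1},\eta^{c_2}$ agree on the whole region $\mathcal{R}$ between the leftmost Spont infection path $\gamma^t_{\mathrm{left}}$ and the common front, up to any time strictly before $D$. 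Since the fronts coincide on $[0,D)$ and split at $D$, the instant $D$ is a jump time of the rightmost particle for at least one process, and the left limits agree at a common value $R:=r_{D^-}(c_1)=r_{D^-}(c_2)$; this jump is produced by a single arrival of $\mathcal{H}$ touching $R$ or its right neighbour.

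The change of the front at $D$ is produced either by a healing mark at the front site $R$ (causing a retreat) or by a type-$+1$ transmission arrow from $R$ to $d:=R+1$ (a rightward attempt); no other arrival can move the rightmost particle of a process whose front sits at $R$. I would first rule out the healing case. The left boundary $\gamma^t_{\mathrm{left}}(D^-)$ of the region of agreement carries a fertile individual in both IS processes (the Spont process has a $1$ there by Lemma~\ref{ActivePathIFF}, and both IS processes dominate it via Remark~\ref{CouplingMainThm}); hence, after a healing at $R$, the new front lies in $[\gamma^t_{\mathrm{left}}(D^-),R)$, a region on which $\eta^{c_1}$ and $\eta^{c_2}$ agree by Lemma~\ref{RegionTheyAgree}, so both would select the same new front, contradicting the definition of $D$. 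Thus $D$ is the rightward attempt, which by definition of $D$ succeeds in one process and fails in the other. Since $R$ is the common fertile front at $D^-$ and such a jump fails precisely when the target is blocked, $d$ must be empty in one process and blocked ($-1$) in the other at time $D^-$.

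It remains to prove that $d$ has never carried a fertile individual before $D$ in either process, the analogue of the non-existence of $t^*$ in the Spont proof. I would fix $\varepsilon>0$ with $r_s(c_1)=r_s(c_2)=R=d-1$ on $[D-\varepsilon,D)$ and argue by contradiction: let $t^*$ be the last time before $D$ at which $d$ is fertile in either process, so necessarily $t^*<D-\varepsilon$ and $d$ is non-fertile throughout $(t^*,D)$. Let $P'$ be the first instant in $(t^*,D)$ at which the two processes disagree on the state of $d$. A common healing mark cannot create this disagreement, and a type-$+1$ arrow cannot either, since it would make $d$ fertile; so it must be created by a type-$-1$ arrow into $d$ from a neighbour $\nu\in\{d-1,d+1\}$ that is fertile in one process but not in the other at $P'$. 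The crux is that such a $\nu$ is pinned into the region of agreement: being fertile it satisfies $\nu\le r_{P'}(c_1)$, and because $d$ and the sites to its right stay non-fertile on $(t^*,D)$ this forces $\nu=d-1$ to be the rightmost fertile site, i.e.\ the common front $r_{P'}(c_1)=r_{P'}(c_2)$, which belongs to $\mathcal{R}$. By Lemma~\ref{RegionTheyAgree} the two processes then agree on $\nu$ at $P'$, contradicting the choice of $\nu$. Hence no such $P'$ exists, the two processes agree on $d$ on all of $(t^*,D)$, and in particular at $D^-$, contradicting the empty/blocked dichotomy above. Thus $t^*$ cannot exist.

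I expect the main obstacle to be precisely this last step, and specifically the synchronisation of the blocking of $d$. In the Spont model blocking is spontaneous, so after its last fertile visit the site $d$ evolves autonomously under marks common to both processes; in the IS model the evolution of $d$ is driven by the states of its neighbours, so one cannot argue autonomously. The whole point is to show that every opportunity to block $d$ in $(t^*,D)$ arises while the triggering neighbour is the common front, and hence inside the region controlled by Lemma~\ref{RegionTheyAgree}. Making this rigorous — together with the supporting bookkeeping that $d$ and everything to its right remain non-fertile on $(t^*,D)$ (via a genealogy/crossing-path argument on active infection paths), and the boundary argument at $\gamma^t_{\mathrm{left}}$ in the healing case — is where the real work lies.
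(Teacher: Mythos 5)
Your proposal follows the paper's own skeleton quite closely: the use of Lemma~\ref{RegionTheyAgree} to obtain a region of agreement for all $t<D$, the elimination of the healing case through a fertile witness at the left boundary, the reduction to a type-$+1$ arrow from $R$ to $d=R+1$ with the empty/blocked dichotomy, and a contradiction argument around the last fertile time $t^*$ of $d$. Your reduction of a first disagreement on $d$ at a time $P'\in(t^*,D)$ to a type-$-1$ arrow whose source $\nu\in\{d-1,d+1\}$ is fertile in exactly one process is also correct. The genuine gap is the step you yourself flag as ``the real work'': the claim that $d$ \emph{and everything to its right} remain non-fertile on $(t^*,D)$, which you use to force $\nu=d-1$ and to identify $\nu$ with the common front. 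This claim does not follow from the definition of $t^*$ and is false in the generality needed: $t^*$ constrains only the site $d$ itself, while the common front is pinned at $R$ only on $[D-\varepsilon,D)$ and may lie far to the right of $R$ at earlier times in $(t^*,D)$. Concretely, an active infection path can cross $d$ \emph{before} $t^*$ and keep sites $R+2,R+3,\dots$ fertile well after $t^*$ without $d$ ever turning fertile again; a type-$-1$ arrow into $d$ can then perfectly well originate from $\nu=d+1=R+2$. No genealogy/crossing-path argument will exclude this, so the case $\nu=d+1$ remains unhandled in your proof.

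The repair is immediate, and it is exactly what the paper does: you never need $\nu=d-1$. Whichever neighbour is the source, its fertility at time $P'<D$ in one process gives $\nu\le r_{P'}(c_1)=r_{P'}(c_2)$ automatically, so $\nu$ lies weakly to the left of the common front and hence in the region of agreement of Lemma~\ref{RegionTheyAgree}; the only case needing separate care is $\nu<\gamma_{\mathrm{left}}^D(P')$, where the leftmost Spont path, which terminates at $\ell_D^-\le R$, would have to occupy or re-cross $d$ at some time in $[P',D)$ while carrying a $1$ (hence, by the coupling of Remark~\ref{CouplingMainThm}, a $1$ for both IS processes), contradicting the definition of $t^*$. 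This is precisely how the paper argues: its witness path $\gamma_2$ for the parent of the sterile individual at $R+1$ is explicitly allowed to sit at $R$ \emph{or} $R+2$, and the boundary check $R+1\in[\gamma_{\mathrm{left}}^D(t^*),r_{t^*}(c_1)]$ is carried out separately. One further small omission: your ``first disagreement'' argument needs the two processes to agree on $d$ at the start of the interval $(t^*,D)$, i.e., that at time $t^*$ the site $d$ was fertile in \emph{both} processes (so that the healing mark leaves it empty in both); this is again the same fertile-implies-left-of-front argument, and the paper proves it explicitly before introducing its times $s_1$ and $s_2$.
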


\begin{proof}
      Note that $D$ must be a moment where the time axis of the site~$r_{D-}({c_1}) = r_{D-}({c_2})=: R$ undergoes a change, so either there exists a healing mark or a transmission mark of type~$+1$. Let $\epsilon>0$ be such that  the only arrival time of~$\mathcal H$ on the grid $[\ell^-_D, R]\times [D-\epsilon, D]$ is the one in~$R$ at moment~$D$. Assume first that this change was due to a healing mark. In~$D-\epsilon$ the processes agree at all sites in~$[\ell^-_D, R]$ because of Lemma~\ref{RegionTheyAgree}. Furthermore, there exists a fertile individual for both of the inherited sterility processes in that interval: namely at least the one in~$\ell^-_D$ which is a 1 for the three processes by coupling construction.  Thus, at the moment when the rightmost individual hits the healing mark their new rightmost individual will then move to the same position. Therefore, the change must have been due to a reproduction mark of type $+1$ from $R$ to $R+1$ at moment $D$. The destination site is empty in one of the processes and blocked in the other.

    By contradiction, assume that the destination site~$R+1$ has been visited by a fertile individual in some (or both) of the two processes before time~$D$. There exists $t^*\in [0, D)$ being the last time the site $R+1$ was occupied by a fertile individual in at least one of the two processes. Note that it would then be occupied in both processes. Indeed, without loss of generality assume that~$\eta_{t^{*-}}^{c_1}(R+1)=1$; then,~$r_{t^{*-}}^{c_1}\geq R+1$, but both processes agree on $[\gamma_{\mathrm{left}}^{D}(t^*), r_{t^*}({c_1})]$ by Lemma~\ref{RegionTheyAgree}, and since~$R+1\in [\gamma_{\mathrm{left}}^{D}(t^*), r_{t^*}({c_1})]$, this site would also be occupied in~$\eta_{t^*}^{c_2}$. Indeed, note that if~$R+1<\gamma_{\mathrm{left}}^{D}(t^*)$, knowing that there exists an active path from $\gamma_{\mathrm{left}}^{D}(t^*)$ to $\gamma_{\mathrm{left}}^{D}(D)\leq R$, there would be a moment after~$t^*$ where the site~$R+1$ would be occupied by $\xi^-$ so also in both processes $\eta^{c_1}$ and $\eta^{c_2}$, and this contradicts the definition of~$t^*$.
    
    By definition of $t^*$, there exists a healing mark from~$U^R$ at time~$t^*$. Let~$\Tilde{\epsilon}>0$ be such that the only arrival time of~$\mathcal H$ on the space-time box $[\gamma_{\mathrm{left}}^{D}(D), R+1]\times [D-\Tilde{\epsilon}, D]$ is the transmission arrow from~$R$ to~$R+1$ at~$D$. Let~$s_2\in [t^*, D-\Tilde{\epsilon}]$ be the last time the site~$R+1$ was blocked by a sterile individual in either of the two processes. If~$s_2\neq D-\Tilde{\epsilon}$, then site is empty in both processes by definition of~$t^*$ and we are done as the transmission arrow~$(R,D)$ to~$(R+1,D)$ would been successful in both processes.  Suppose then that $s_2=D-\Tilde{\epsilon}$. Assumed without loss of generality that $R+1$ at moment $D-\Tilde{\epsilon}$ was occupied by a sterile individual in the process started from $c_1$; we will conclude then that it would also be occupied by a sterile individual in the process started from $c_2$. 
  
      Without loss of generality, $\eta_{D-}^{c_1}(R+1)=0$ and $\eta_{D-}^{c_2}(R+1)=-1$. Let $s_1\in (t^*, D)$ be the time where the life of the sterile individual in $\eta^{c_1}$ started, i.e., $U^{R+1}\cap{[s_1,D]}=\emptyset$. This would imply the existence of $\gamma_2:[0,s_1)\rightarrow \mathbb{Z}$ active infection path for $(\eta_t^{c_1})_{t\geq 0}$ where $\gamma_2(0)=x$ by a crossing path argument; we have that $s_1\in N_2^{\gamma_2(s_1-), R+1}$ and $\gamma_2(s_1-)$ is equal to $R$ or $R+2$. Also note that $\gamma_2(s)\leq r_s({c_1})$ for all $s\in [0,s_1)$ by definition of $r_{s}(c_1)$ and the fact that $\gamma_2$ is active, and thus the whole trajectory of $\gamma_2$ belongs to a region where both processes agree until time $s_1$ by the Proposition \ref{RegionTheyAgree}, and thus $\gamma_2$ is also an active infection path for $(\eta_t^{c_2})_{t\geq 0}$. Hence, the rightmost particle cannot move from $R$ to $R+1$ at time $T$ in $(\eta_t^{c_2})_{t\geq 0}$ either and we end up with a contradiction. 
\end{proof}

\subsection{Proof of Proposition \ref{PropertiesFailureTimeCPIS} and~\ref{Prop_Time_Between_renCPIS}} \label{SectionMainProofCPIS}

The previous lemma implies that the destination site was originally blocked in one of the two processes but not in the other, and has never seen any healing marks until this moment. This is a similar conclusion as the one we had for the Spont process in Lemma~\ref{split_site_never_visited}. In order to prove Proposition~\ref{PropertiesFailureTimeCPIS}, all we need is an estimate of the long but finite survival time of the IS process.

\begin{rk}\label{smallClusterIS}
    For an IS process with parameters~$\lambda>\lambda_c$ and~$p>\tilde{p}$ as in Remark~\ref{SupercriticalSpont}, there exists positive constant $A,p$ such that for every $c\in\mathcal C$ 
    \begin{align}
         \mathbb{P}(t<\tau_\eta(c)<\infty)\leq Ae^{-t^p}.
    \end{align}
\end{rk}

\begin{proof}

The idea is simple: we will restart the underlying Spont process each time it dies out and conclude by using the fact that it is unlikely that it will survive for a long finite time or that it will restart many times without surviving. Let us define the restarting times, starting by \[
T_1=\min\big(\tau(\xi_\cdot^c),\tau(\eta_\cdot^c)\big)\leq \tau(\eta_\cdot^c).
\]
If $T_1<\tau(\eta_\cdot^c)$ then $T_2=\min\Big(T_1+\tau\Big(\xi_{T_1,\cdot}^{\delta_{r(\eta_{T_1}^c)}^-}\Big), \tau(\eta_\cdot^c)\Big). $ If not, we stop the procedure and $T_2=T_1$. 
Generally, let us define
\[
T_{k+1}=
\begin{cases}
\min\left(T_{k}+\tau\left(\xi_{T_k,\cdot}^{\delta_{r(\eta_{T_k}^c)}^-}\right), \tau(\eta_\cdot^c)\right) &\text{ if } T_k<\tau(\eta_\cdot^c) \\
  T_{k} &\text{ if } T_k=\tau(\eta_\cdot^c).
\end{cases}
\]
Then we define $K=\inf\{i,~T_i=\tau(\eta_\cdot^c)\}$. Now, let us denote by $E(t)=\{t< \tau(\eta_\cdot^c)<\infty\}$ the event that we want to estimate. We have the following:

\begin{align*}
\mathbb P(E(t))&=\mathbb P \left(E(t)\cap \{K>\sqrt{t}\}\right) + \mathbb P \left(E(t)\cap \{K\leq \sqrt{t}\}\right)\\
&\leq \mathbb P ( K>\sqrt{t}) + \mathbb P \left(\exists i \in \{1,\ldots,\sqrt{t}\} \text{ s.t. } \sqrt{t}<\tau(\xi_{{T_i},\cdot}^-)<\infty\right)\\
&\leq q^{\sqrt{t}}+A\sqrt{t}\cdot e^{-\sqrt{t}} \leq Ae^{-\frac{\sqrt{t}}{2}},
 \end{align*}
where $q<1$ is the probability of dying out of a Spont process and where the second bound is obtained thanks to Remark~\ref{smallClusterSPON}.
    
\end{proof}

\begin{proof}[Proof of Proposition~\ref{PropertiesFailureTimeCPIS}]
    Because of the conclusion of Lemma~\ref{split_case_CPIS}, the proof of $\mathbb P(F(\eta_\cdot,c)=\infty)>0$ is the same as the one done for the Spont process. Indeed, using the fact that $\xi_t^c\lesssim \eta_t^c\lesssim \zeta_{1,t}^{h_x+1}$ we have that $I\cap R\cap H\cap \mathcal{S}_{\xi}(\delta_{x}^{-})\subset \{F(\eta,c)=\infty\}$, with the same $I$, $H$ and $R$. For the second part, we can also do the same proof but, because of the conclusion of Remark~\ref{smallClusterIS}, we obtain a stretched exponential estimate instead of an exponential one. 
    \end{proof}
\begin{proof}[Proof of Proposition~\ref{Prop_Time_Between_renCPIS}]
    Follows exactly as the proof if Proposition~\ref{Prop_Time_Between_ren}, with the reference to Lemma~\ref{split_site_never_visited} being replaced by its inherited sterility analogue in Lemma~\ref{split_case_CPIS} and the reference to Proposition~\ref{FPossibleAndSmallCluster} being replaced by Proposition~\ref{PropertiesFailureTimeCPIS}. 
\end{proof}

\section{Markov-type property and i.i.d. structure} \label{Sec_Markov}

\paragraph{} We are now in a position to state and prove a Markov-type property for the time $\sigma_1(\chi_\cdot,c)$ defined for the process~$\chi_{\cdot}$ as in \eqref{defOfGenericSigmaSpont}, conditioning on~$\sigma_0(\chi_\cdot,c)=0$ assuming that this event has positive probability. We first define the following conditioned measure. 

\begin{definition} \label{measureConditionedSigma00}
For $x\in\Z$, and $c\in\mathcal C^x$, let~$\bar{\mathbb P}^x_\chi$ denote the following probability measure:
    \begin{equation*}
        \bar{\mathbb{P}}_\chi^x(~\cdot~)=\mathbb{P}(~\cdot~\,|\,\sigma_0(\chi_\cdot,c)=0)
    \end{equation*}
    
\end{definition}
Note that in Definition~\ref{measureConditionedSigma00},  we have highlighted that this probability depends only on site~$x$ and not on the initial configuration~$c\in\mathcal C^x$, as justified before.

\begin{prop} \label{MarkovProp}
    Let~$x,y\in\mathbb{Z}$ and~$c\in \mathcal{C}^x$. Consider~$(\chi_t^c)_{t\geq 0}$ a process constructed with a graphical construction $\mathcal H$. Let~$\sigma=\sigma_1(\chi_\cdot,c)$ be as defined in \eqref{defOfGenericSigmaSpont}.  For any events $A$ and $B$, it follows that:
    \begin{equation}\label{MarkovEq}
    \bar{\mathbb{P}}^x_\chi\left(\mathcal H_{[0,\sigma]}\in A, r_{\sigma}(c) = y, \mathcal H_{[\sigma,\infty)}\in B\right) = \bar{\mathbb{P}}^x_\chi\left(\mathcal H_{[0,\sigma]}\in A, r_{\sigma}(c) = y\right)\bar{\mathbb{P}}^y_\chi\left(\mathcal H\in B\right)
    \end{equation}
\end{prop}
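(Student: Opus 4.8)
The plan is to exploit the fact that although $\sigma=\sigma_1(\chi_\cdot,c)$ is not a stopping time, it is built from the genuine stopping times $F_k^1$: on the event $\{\sigma_1=F_k^1\}=\{F_1^1<\infty,\dots,F_k^1<\infty,\,F_{k+1}^1=\infty\}$ we have $\sigma=F_k^1$, $\mathcal H_{[0,\sigma]}=\mathcal H_{[0,F_k^1]}$ and $r_\sigma=r_{F_k^1}$. I would therefore decompose the left-hand side of \eqref{MarkovEq} over $k\geq 0$ and apply the strong Markov property of the Poisson graphical construction at each $F_k^1$, never at $\sigma$ itself. Throughout I write $p:=\mathbb P(\sigma_0=0)$, which by translation invariance and Lemma~\ref{cDoesNotMatterFInfty} does not depend on the base point, so that $\bar{\mathbb P}^x_\chi(\cdot)=\tfrac1p\mathbb P(\cdot\cap\{\sigma_0=0\})$.

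The first structural input is that the event $\{F_{k+1}^1=\infty\}=\mathcal P(\chi_\cdot,c,[F_k^1,\infty))$ depends on the randomness only through $\mathcal H_{[F_k^1,\infty)}$ and through the scalar $r_{F_k^1}(c)$, and not through the full configuration $\chi_{F_k^1}^c$. This is exactly the content of the invariance property in Definition~\ref{specialPropertySpont}: on this event every process started from an arbitrary $d\in\mathcal C^{r_{F_k^1}(c)}$ at time $F_k^1$ survives and shares one common rightmost trajectory, so both the event and the trajectory it produces are functions of $(\mathcal H_{[F_k^1,\infty)},\,r_{F_k^1})$ alone. By time-translation invariance of $\mathcal H$, after shifting by $F_k^1$ this event coincides, on $\{r_{F_k^1}=y\}$, with $\{\sigma_0(\chi_\cdot,c')=0\}$ for any $c'\in\mathcal C^y$; writing $\psi(y):=\mathbb P\big(\mathcal P(\chi_\cdot,c',[0,\infty))\cap\{\mathcal H\in B\}\big)$ we then have $\bar{\mathbb P}^y_\chi(\mathcal H\in B)=\psi(y)/p$, and taking $B$ to be the whole space gives $\mathbb P(\mathcal P([F_k^1,\infty))\mid\mathcal F_{F_k^1})=p$ on $\{r_{F_k^1}=y\}$.

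The crux is the second input: I must rewrite $\{\sigma_0=0\}\cap\{\sigma_1=F_k^1\}$ as an $\mathcal F_{F_k^1}$-measurable event intersected with the future event $\mathcal P([F_k^1,\infty))$, the difficulty being that $\{\sigma_0=0\}=\mathcal P([0,\infty))$ is itself a future event. This is resolved by the identity, valid on $\{F_k^1<\infty\}$,
\[
\mathcal P([0,\infty))\cap\mathcal P([F_k^1,\infty))\;=\;\mathcal P([0,F_k^1])\cap\mathcal P([F_k^1,\infty)),
\]
which follows by combining the inference property (Lemma~\ref{InferenceProperty}, giving $\subseteq$ in one direction) with the monotonicity $\mathcal P([0,\infty))\subseteq\mathcal P([0,F_k^1])$ from the first item of the remark following Definition~\ref{specialPropertySpont} (giving the other). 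Hence
\[
\{\sigma_0=0\}\cap\{\sigma_1=F_k^1\}=G_k\cap\mathcal P([F_k^1,\infty)),\qquad G_k:=\mathcal P([0,F_k^1])\cap\{F_1^1<\infty,\dots,F_k^1<\infty\},
\]
and $G_k\in\mathcal F_{F_k^1}$ because $\mathcal P([0,s])$ is $\mathcal F_s$-adapted (it is determined by $\mathcal H_{[0,s]}$) and $F_k^1$ is a stopping time. With this in hand I would condition on $\mathcal F_{F_k^1}$: on $G_k\cap\{r_{F_k^1}=y\}$ the future factor $\mathbf 1_{\mathcal P([F_k^1,\infty))}\,\mathbf 1_{\{\mathcal H_{[F_k^1,\infty)}\in B\}}$ has conditional probability $\psi(y)$ by the first input and the strong Markov property, while the same computation with $B$ full shows $\mathbb P(\{\sigma_0=0,\sigma_1=F_k^1\}\cap\{\mathcal H_{[0,\sigma]}\in A,r_\sigma=y\})=p\,\mathbb P(G_k\cap\{\mathcal H_{[0,F_k^1]}\in A,r_{F_k^1}=y\})$. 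Each summand of $\mathbb P(\{\sigma_0=0\}\cap\{\mathcal H_{[0,\sigma]}\in A,r_\sigma=y,\mathcal H_{[\sigma,\infty)}\in B\})$ then equals $(\psi(y)/p)\,\mathbb P(\{\sigma_0=0,\sigma_1=F_k^1\}\cap\{\mathcal H_{[0,\sigma]}\in A,r_\sigma=y\})$; summing over $k$ and dividing by $p$ turns the $\mathbb P$'s into $\bar{\mathbb P}^x_\chi$'s and identifies $\psi(y)/p=\bar{\mathbb P}^y_\chi(\mathcal H\in B)$, which is precisely \eqref{MarkovEq}. The argument uses only Definition~\ref{specialPropertySpont}, Lemma~\ref{InferenceProperty}, and the memorylessness of $\mathcal H$, so it applies verbatim to both the Spont and the IS processes.

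The main obstacle I expect is exactly this second input: because the conditioning event $\{\sigma_0=0\}$ reaches into the future beyond $\sigma$, a naive strong Markov argument at $F_k^1$ would be circular, and one must first establish the localization identity above to peel off an $\mathcal F_{F_k^1}$-measurable part. Everything downstream — the factorization via the Poisson clocks restarting afresh at the stopping times $F_k^1$, and the configuration-independence of the future supplied by the invariance property — is then routine.
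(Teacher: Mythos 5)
Your proposal is correct and follows essentially the same route as the paper's proof: the same decomposition over the stopping times $F_k^1$ (never applying Markov at $\sigma$ itself), the same key localization identity $\mathcal P([0,\infty))\cap\mathcal P([F_k^1,\infty))=\mathcal P([0,F_k^1])\cap\mathcal P([F_k^1,\infty))$ — which is exactly the paper's equation \eqref{specialPropertyAndFailureTimes}, obtained from Lemma~\ref{InferenceProperty} together with the monotonicity of $\mathcal P$ — followed by the strong Markov property at the stopping time $F_k^1$ and the identification of the conditional future probability with $\bar{\mathbb{P}}^y_\chi$. The only difference is presentational: you spell out the $\mathcal F_{F_k^1}$-measurability of $G_k$ and the fact that $\mathcal P([F_k^1,\infty))$ depends only on $\bigl(r_{F_k^1},\mathcal H_{[F_k^1,\infty)}\bigr)$ via the invariance property, points the paper uses implicitly in its conditioning step.
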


\begin{proof}
   In order to simplify the notation, in the same way we set $\sigma:=\sigma_1$, we will drop the index~$1$  in the stopping times $(F_k^1)_k$ used in the definition of $\sigma_1$. We start by using the definition of $\sigma$ and rewriting the left-hand side of \eqref{MarkovEq} as:
\begin{align*}
    &\bar{\mathbb{P}}^x_\chi\left(\mathcal H_{[0,\sigma]}\in A,\, r_{\sigma}(c) = y,\, H_{[\sigma,\infty)}\in B\right) \\
    &= \sum_{k=1}^{\infty} \bar{\mathbb{P}}^x_\chi\left(\mathcal H_{[0,F_k]}\in A,\, F_{k}<\infty, r_{F_k}(c) = y,\, F_{k+1}=\infty,\, \mathcal H_{[F_k,\infty)}\in B\right) \\
    &= \sum_{k=1}^{\infty} \mathbb{P}\left(\mathcal H_{[0,F_k]}\in A,\, r_{F_k}(c) = y,\, F_{k+1}=\infty,\, \mathcal H_{[F_k,\infty)}\in B,\, F=\infty\right) \times \frac{1}{\mathbb{P}(F=\infty)}
\end{align*}
where we recall that $\{\sigma_0=0\}=\{F=\infty\}$. Note also that in the first equality we have added the intersection with the event $\{F_{k}<\infty\}$ but we have dropped it for the following line as this can be deduced by the event $\{r_{F_k}(c)=y\}$. 

By Lemma~\ref{InferenceProperty}, we have the following key equality of events:
\begin{equation} \label{specialPropertyAndFailureTimes}
    \{F_{k+1}=\infty\}\cap \{F=\infty\}= \mathcal P ([F_k,\infty)) \cap \mathcal P([0, \infty)) =  \mathcal P ([F_k,\infty)) \cap \mathcal P([0, F_k]).
\end{equation}

Then, it follows that the left-hand side of~\eqref{MarkovEq} is equal to: 
\begin{align*}
    &\bar{\mathbb{P}}^x_\chi\left(\mathcal H_{[0,\sigma]}\in A,\, r_{\sigma}(c) = y,\, H_{[\sigma,\infty)}\in B\right) \\
    &=\sum_{k=1}^{\infty} \mathbb{P}\left(\mathcal{H}_{[0,F_k]} \in A,\, r_{F_k}(c) = y,\, \mathcal{P}\left([0,F_k]\right),\, \mathcal{P}\left([F_k,\infty)\right),\, \mathcal H_{[F_k,\infty)} \in B\right) \times \frac{1}{\mathbb{P}(F = \infty)} \\
    &= \sum_{k=1}^{\infty} \frac{\mathbb{P}\left(\mathcal H_{[0,F_k]} \in A,\, r_{F_k}(c) = y,\, \mathcal{P}([0,F_k])\right)}{\mathbb{P}(F = \infty)} \mathbb{P}\left(\mathcal{P}([F_k,\infty)),\, \mathcal H_{[F_k,\infty)}  \in B\,|\,r_{F_k}(c) = y \right)
\end{align*}
where the last equation follows from the strong Markov property since $F_k$ is a stopping time. 
\begin{align*}
  &\bar{\mathbb{P}}^x_\chi\left(\mathcal H_{[0,\sigma]}\in A,\, r_{\sigma}(c) = y,\, H_{[\sigma,\infty)}\in B\right) \\
  &= \sum_{k=1}^{\infty} \frac{\mathbb{P}\left(\mathcal H_{[0,F_k]} \in A,\, r_{F_k}(c) = y,\, \mathcal{P}([0,F_k])\right)}{\mathbb{P}(F = \infty)} \mathbb{P}\left(\mathcal H_{[F_k,\infty)}  \in B\,|\,r_{F_k}(c) = y, \mathcal{P}([F_k,\infty)) \right)\\
  &~~~~\times \mathbb P( \mathcal{P}([F_k,\infty))\,|\,r_{F_k}(c) = y)\\
     &=\sum_{k=1}^{\infty} \frac{\mathbb{P}\left(\mathcal H_{[0,F_k]} \in A,\, r_{F_k}(c) = y,\, \mathcal{P}([0,F_k]),\, \mathcal{P}([F_k,\infty))\right)}{\mathbb{P}(F = \infty)}\\
     &~~~~\times \mathbb{P}\left(\mathcal H_{[F_k,\infty)}  \in B \mid r_{F_k} = y, \mathcal{P}([F_k,\infty)) = \infty\right)
\end{align*}

We note that~$\mathbb{P}\left(\mathcal H_{[F_k,\infty)}  \in B \mid r_{F_k} = y, \mathcal{P}([F_k,\infty)\right)$ is simply~$\bar{\mathbb{P}}^{y}_\chi\left(\mathcal H \in B\right)$ by the definition of the measure~$\bar{\mathbb P}^y$. Therefore, we can reduce this summation to:
\begin{align*}
&\bar{\mathbb{P}}^x_\chi\left(\mathcal H_{[0,\sigma]}\in A,\, r_{\sigma}(c) = y,\, H_{[\sigma,\infty)}\in B\right) \\
    &\sum_{k=1}^{\infty} \frac{\mathbb{P}\left(H_{[0,F_k]} \in A,\, r_{F_k}(c) = y,\, F_{k+1} = \infty,\, F = \infty\right)}{\mathbb{P}(F = \infty)} \bar{\mathbb{P}}^{y}\left(\mathcal H \in B\right) \\     &=\bar{\mathbb{P}}^{y}_\chi\left(\mathcal H \in B\right) \sum_{k=1}^{\infty} \bar{\mathbb{P}}^{x}_\chi\left(\mathcal H_{[0,F_k]} \in A,\, r_{F_k}(c) = y,\, F_{k+1} = \infty\right) \\
    &=\bar{\mathbb{P}}^{x}_\chi\left(\mathcal H_{[0,\sigma]} \in A,\, r_{\sigma} = y\right) \bar{\mathbb{P}}^{y}_\chi(\mathcal H \in B)
\end{align*}
where in first equality we have used again~\eqref{specialPropertyAndFailureTimes}, in the second equality we have used the definition of the conditional measure $\bar{\mathbb P}^x$. 
\end{proof}

\begin{prop} \label{iidSequence}
    Let $x\in\mathbb Z$ and $c\in\mathcal C^x$. For a process~$(\chi_t^c)_{t\geq 0}$, consider the sequence of times $(\sigma_n(\chi,c))_{n\in \mathbb N_0}$ as defined in~\eqref{defOfGenericSigmaSpont}. Under $\bar{\mathbb{P}}^x_\chi$, the sequences of time increments $(\sigma_{i+1}-\sigma_i)_{i\in\mathbb{N}_0}$ and the sequence of space increments $\left((r_t-r_{\sigma_i})_{\sigma_i\leq t <\sigma_{i+1}}\right)_{i\in\mathbb{N}_0}$ are i.i.d.
\end{prop}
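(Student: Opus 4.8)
The plan is to combine the Markov-type identity of Proposition~\ref{MarkovProp} with the space–time translation invariance of the graphical construction, and then to close the argument by a routine induction on finite-dimensional distributions. First I would package the increments into a single object: for $i\ge 0$ set
\[
W_i := \Big(\sigma_{i+1}-\sigma_i,\ \big(r_{\sigma_i+s}(c)-r_{\sigma_i}(c)\big)_{0\le s<\sigma_{i+1}-\sigma_i}\Big),
\]
so that the proposition is exactly the statement that $(W_i)_{i\ge 0}$ is i.i.d.\ under $\bar{\mathbb P}^x_\chi$. Two structural facts drive everything. First, with $\sigma=\sigma_1$, the increment $W_0$ is $\mathcal H_{[0,\sigma]}$-measurable, since both $\sigma_1$ and the rightmost trajectory on $[0,\sigma_1)$ are determined by $\mathcal H_{[0,\sigma_1]}$ and the fixed starting site $x$. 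Second—and this is the crucial point—because $\sigma_1$ is a renewal time we have $\mathcal P(\chi_\cdot,c,[\sigma_1,\infty))$ (this is the event $\{F^1_{k+1}=\infty\}$ defining $\sigma_1=F^1_k$ in~\eqref{defOfGenericSigmaSpont}); by the invariance part of $\mathcal P$ together with the inference property (Lemma~\ref{InferenceProperty}), the rightmost trajectory from $\sigma_1$ onward, and hence the entire renewal construction restarted at time $\sigma_1$ from \emph{any} configuration in $\mathcal C^{r_{\sigma_1}(c)}$, is determined by $\mathcal H_{[\sigma_1,\infty)}$ and by $y:=r_{\sigma_1}(c)$ alone. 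Consequently $(W_1,W_2,\dots)$ is, once $\{r_\sigma(c)=y\}$ is fixed, an $\mathcal H_{[\sigma,\infty)}$-measurable functional, namely the increment sequence of a process restarted at $\sigma_1$ with rightmost particle at $y$.

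Next I would apply Proposition~\ref{MarkovProp}. Fix Borel sets $A'$, $B'$, let $A=\{W_0\in A'\}$ (an $\mathcal H_{[0,\sigma]}$-event), and for each $y$ let $B_y$ be the $\mathcal H_{[\sigma,\infty)}$-event which on $\{r_\sigma(c)=y\}$ equals $\{(W_1,W_2,\dots)\in B'\}$; by the second structural fact, $B_y$ is precisely the event that the increment sequence of the process restarted with rightmost at $y$ lies in $B'$. Summing the identity~\eqref{MarkovEq} over $y$ gives
\[
\bar{\mathbb P}^x_\chi\big(W_0\in A',\,(W_1,W_2,\dots)\in B'\big)=\sum_{y}\bar{\mathbb P}^x_\chi\big(W_0\in A',\,r_\sigma(c)=y\big)\,\bar{\mathbb P}^y_\chi(\mathcal H\in B_y).
\]
Since the time and space increments are invariant under space–time translations, the law of a full increment sequence under $\bar{\mathbb P}^y_\chi$ does not depend on $y$ (this invokes the translation invariance of $\mathcal H$ already recorded after Lemma~\ref{cDoesNotMatterFInfty}); write $\nu$ for this common law, so that $\bar{\mathbb P}^y_\chi(\mathcal H\in B_y)=\nu(B')$ for every $y$. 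Pulling this constant out of the sum yields
\[
\bar{\mathbb P}^x_\chi\big(W_0\in A',\,(W_1,W_2,\dots)\in B'\big)=\bar{\mathbb P}^x_\chi(W_0\in A')\,\nu(B'),
\]
which simultaneously shows that $W_0$ is independent of $(W_1,W_2,\dots)$ and that $(W_1,W_2,\dots)$ has law $\nu$, i.e.\ the same law as the entire sequence $(W_0,W_1,\dots)$.

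Finally I would conclude by induction on $n$ that $(W_0,\dots,W_n)$ are i.i.d.\ with the common marginal $\mu$ of $W_0$. The case $n=0$ is trivial; for the inductive step one uses $(W_1,\dots,W_n)\overset{d}{=}(W_0,\dots,W_{n-1})$ (a consequence of the distributional identity $(W_1,W_2,\dots)\overset{d}{=}(W_0,W_1,\dots)$), which is i.i.d.\ $\mu$ by the induction hypothesis, together with the independence of $W_0$ from $(W_1,\dots,W_n)$. As this holds for all $n$, the sequence $(W_i)_{i\ge0}$ is i.i.d., which is the claim. The main obstacle is the second structural fact: one must verify carefully that the post-$\sigma_1$ renewal construction is a genuine measurable functional of $\mathcal H_{[\sigma_1,\infty)}$ and of $y=r_{\sigma_1}(c)$ only, and does not covertly depend on the full configuration at time $\sigma_1$. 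This is exactly what the special property holding from $\sigma_1$ onward, via Lemma~\ref{InferenceProperty}, guarantees, and it is what licenses both the $\mathcal H_{[\sigma,\infty)}$-measurability of $(W_1,W_2,\dots)$ and the identification of its conditional law, given $r_\sigma(c)=y$, with $\bar{\mathbb P}^y_\chi$.
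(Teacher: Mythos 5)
Your proof is correct and follows essentially the same route as the paper's: decompose on the value of $r_{\sigma_1}$, apply the Markov-type identity of Proposition~\ref{MarkovProp}, remove the $y$-dependence via translation invariance of the graphical construction, and iterate to get all finite-dimensional distributions. The only cosmetic difference is that you package the time and space increments into a single object $W_i$, obtaining joint i.i.d.-ness in one pass, whereas the paper runs the identical argument twice, once for the time increments and once for the space increments.
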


\begin{proof}
    For the increments regarding the time, we note the following. Let $E_i$ for $i=0,1,\dots,k$ be measurable subsets of $[0,\infty)$. It then follows:
\begin{align*}
    &\bar{\mathbb{P}}^x_\chi\left(\sigma_{i+1} - \sigma_i \in E_i \text{ for } i \in \{0,1,\dots,k\} \right) \\
    &= \sum_{z \in \mathbb{Z}} \bar{\mathbb{P}}^x_\chi\left(\sigma_1 \in E_0;\, \sigma_{i+1} - \sigma_i \in E_i \text{ for } i \in \{0,1,\dots,k\};\, \sigma_1 < \infty;\, r_{\sigma_1} = z \right) \\
    &= \sum_{z \in \mathbb{Z}} \bar{\mathbb{P}}^x_\chi\left(\sigma_1 \in E_0;\, \sigma_1 < \infty;\, r_{\sigma_1} = z \right) \bar{\mathbb{P}}^z_\chi\left(\sigma_{i+1}(\delta_z^-) - \sigma_i(\delta_z^-) \in E_i \text{ for } i \in \{0,\dots,k-1\} \right) \\
    &= \bar{\mathbb{P}}^x_\chi\left(\sigma_{i+1} - \sigma_i \in E_i \text{ for } i \in \{1,\dots,k\} \right) \sum_{z \in \mathbb{Z}} \bar{\mathbb{P}}^x_\chi\left(\sigma_1 \in E_0;\, \sigma_1 < \infty;\, r_{\sigma_1} = z \right) \\
    &= \bar{\mathbb{P}}^x_\chi\left(\sigma_1 \in E_0 \right) \bar{\mathbb{P}}^x_\chi\left(\sigma_{i+1} - \sigma_i \in E_i \text{ for } i \in \{1,\dots,k\} \right)
\end{align*}
where in the first equality we have decomposed on the position of the rightmost~$1$ at the moment of the first renewal using that this moment is finite; in the second equality we have applied the Markov property; in the third equality we have used translation invariance and noted that~$\sigma_k(c) = \sigma_{k-1}(\delta_{r_{\sigma_0}}^-)$ for all~$k\geq 1$. 

    Let $E_0,E_1,\dots,E_k$ be measurable sets of $\cup_{t\geq 0}D_{[0,t]}$ the space of finite-time trajectories that are right-continuous with left limits using the same notation as \cite{billingsley2013convergence}. Since under~$\Bar{\mathbb{P}}^x$ we are conditioning on~$\sigma_0=0$, one has the following:
\begin{align*}
    &\bar{\mathbb{P}}^x_\chi\left((r_s - r_{\sigma_i})_{\sigma_i \leq s < \sigma_{i+1}} \in E_i \text{ for } i\in\{0,\dots,k\} \right) \\
    &= \bar{\mathbb{P}}^x_\chi\left((r_s - r_{\sigma_i})_{\sigma_i \leq s < \sigma_{i+1}} \in E_i \text{ for } i\in\{0,\dots,k\}, \sigma_1 < \infty \right) \\
    &= \sum_{z \in \mathbb{Z}} \bar{\mathbb{P}}^x_\chi\left((r_s - r_{\sigma_i})_{\sigma_i \leq s < \sigma_{i+1}} \in E_i \text{ for } i\in\{0,\dots,k\}, \sigma_1 < \infty, r_{\sigma_1} = z \right) \\
    &= \sum_{z \in \mathbb{Z}} \bar{\mathbb{P}}^x_\chi\left((r_s)_{0 \leq s < \sigma_1} \in E_0; (r_s - r_{\sigma_i})_{\sigma_i \leq s < \sigma_{i+1}} \in E_i \text{ for } i\in\{1,\dots,k\}, \sigma_1 < \infty, r_{\sigma_1} = z \right) \\
    &= \sum_{z \in \mathbb{Z}} \bar{\mathbb{P}}^x_\chi\left((r_s)_{0 \leq s < \sigma_1} \in E_0,\sigma_1 < \infty, r_{\sigma_1} = z \right) \bar{\mathbb{P}}^z_\chi\left((r_s(\delta_z^-) - r_{\sigma_i}(\delta_z^-))_{ \sigma_1\leq s < \sigma_{i+1}-\sigma_1} \in E_i \text{ for } i\in\{1,\dots,k\}\right) \\
    &= \bar{\mathbb{P}}^x_\chi\left((r_s - r_{\sigma_i})_{\sigma_i \leq s < \sigma_{i+1}} \in E_i \text{ for } i\in\{1,\dots,k\} \right) \sum_{z \in \mathbb{Z}} \bar{\mathbb{P}}^x_\chi\left((r_s)_{0 \leq s < \sigma_1} \in E_0; \sigma_1 < \infty; r_{\sigma_1} = z \right) \\
    &= \bar{\mathbb{P}}^x_\chi\left((r_s)_{0 \leq s < \sigma_1} \in E_0 \right) \bar{\mathbb{P}}^x_\chi\left((r_s - r_{\sigma_i})_{\sigma_i \leq s < \sigma_{i+1}} \in E_i \text{ for } i\in\{1,\dots,k\} \right)
\end{align*}
where in the first equality we have used the fact that $\sigma_1$ is finite under $\bar{\mathbb{P}}^x$ (Corollary~\ref{CorSpontRenTimes}); in the second equality, we have decomposed the value of $r_{\sigma_1}$ using the fact that we are on the event that $\sigma_1$ is finite. Then, we have isolated the first part of the trajectory and finally we have used the Markov-type property proved in Proposition \ref{MarkovProp}. The next equality follows by translation invariance. Finally, note that we can apply those same steps again to the second term in the last equality in order to obtain that the desired i.i.d. space decomposition.

\end{proof}

We make a small observation regarding the definition of the special property we are chasing when defining $\sigma$. For the inference property to hold, it would be enough to require that the rightmost positions of the original and hostile processes agree at all future times, rather than demanding agreement with such a large class of configurations. But note that this would not lead to an i.i.d. structure in time because of the following: for times where the landscape ahead of the rightmost particle of the original process had many blocked sites, it would be potentially easier for the rightmost particles of the original process and the process started for the hostile environment to coincide compared to the case where the landscape ahead of the original process was more empty. 

\section{Proofs of the main theorems} \label{sec_ProofMainTheorems}

\paragraph{} Let $x\in\Z$ and $c\in\mathcal C ^x$. Throughout the rest of this section, we consider~$(\chi_t^c)_{t\geq 0}$ a process (which can be Spont or IS) started from~$c$ with parameters~$\lambda>\lambda_c$ and~$p>\tilde{p}$ as in Remark~\ref{SupercriticalSpont} and we consider the sequence of renewal times~$(\sigma_n(\chi,x))_{n\in\mathbb N_0}$ defined in Section~\ref{subDefinitionOfTau}.

For a given~$t \in [0, \infty)$, let~$N(t) := \sup\{n \in \mathbb{N} : \sigma_n(\chi, c) < t\}$ denote the index of the last renewal strictly before time~$t$. In what follows, we will need to control both the amount of time~$t-\sigma_{N(t)}$ and also the displacement of the rightmost particle from that renewal time~$\sigma_{N(t)}$ up to time~$t$. This is precisely the role of Lemma~\ref{lem_timeSpaceRenewalBeforet}, whose proof closely follows that of Lemma 2.5 in~\cite{Valesin2010Multitype}. In what follows, we will replace~$\sigma_n(\chi,c)$ and $r(\chi_t^c)$ by simply~$\sigma_n$ and $r_t$ to simplify notation. We recall that $\bar{\mathbb{P}}_\chi^x(\cdot)=\mathbb{P}(\cdot\,|\,\sigma_0(\chi_\cdot,c)=0)$.

\begin{lem}\label{lem_timeSpaceRenewalBeforet}
    Let~$t\in[0,\infty)$ be given. Then, there exist constants~$d,D,p>0$ (independent of the initial configuration~$c\in\mathcal C$) such that the following holds for any~$L\geq 0$:
    \[
    \Bar{\mathbb{P}}^x_{\chi}\left((\sigma_{N(t)+1}-\sigma_{N(t)})\vee (\sup\{|r_s-r_{\sigma_{N(t)}}|\,:\,s\in[\sigma_{N(t)},\sigma_{N(t)+1}]\})>L\right) \leq De^{-dL^p}.
    \]
\end{lem}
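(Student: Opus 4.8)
The plan is to treat this as a size-biased renewal estimate, in the spirit of Lemma~2.5 of \cite{Valesin2010Multitype}. Write $T_n := \sigma_{n+1}-\sigma_n$ and $S_n := \sup\{|r_s-r_{\sigma_n}| : s\in[\sigma_n,\sigma_{n+1}]\}$, so that the quantity to control is $V_n := T_n \vee S_n$ evaluated at the random index $n=N(t)$. Since $\{N(t)=n\}$ is exactly the event $\{\sigma_n<t\le\sigma_{n+1}\}$, I would first decompose
\[
\bar{\mathbb P}^x_\chi(V_{N(t)}>L)=\sum_{n\ge 0}\bar{\mathbb P}^x_\chi\big(\sigma_n<t\le\sigma_{n+1},\,V_n>L\big).
\]
By Proposition~\ref{iidSequence} the pair $(T_n,S_n)$ is independent of $\sigma_n=\sum_{i<n}T_i$ and is distributed as $(T_0,S_0)$ under $\bar{\mathbb P}^x_\chi$. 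Conditioning on $\sigma_n$ and setting $g(u,L):=\bar{\mathbb P}^x_\chi(T_0\ge u,\,V_0>L)$, each summand equals $\int_{[0,t)}\bar{\mathbb P}^x_\chi(\sigma_n\in du)\,g(t-u,L)$, so the whole sum becomes $\int_{[0,t)}U(du)\,g(t-u,L)$ where $U:=\sum_{n\ge0}\bar{\mathbb P}^x_\chi(\sigma_n\in\cdot)$ is the renewal measure.

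The next step disposes of the size-biasing. By construction $F_0^{m+1}=\sigma_m+1$, hence $T_m\ge 1$ for every $m$; consequently every realisation places at most one renewal point in a half-open interval of length one, giving the deterministic bound $U([a,a+1))\le 1$ for all $a$. Combining this with the monotonicity of $u\mapsto g(u,L)$ and a partition of the domain of integration into unit intervals yields
\[
\bar{\mathbb P}^x_\chi(V_{N(t)}>L)\le \sum_{k\ge 0} g(k,L)=\sum_{k\ge 0}\bar{\mathbb P}^x_\chi(T_0\ge k,\,V_0>L),
\]
a bound that no longer depends on $t$. Using $\{V_0>L\}\subseteq\{T_0>L\}\cup\{S_0>L\}$ together with $\bar{\mathbb P}^x_\chi(T_0\ge k,S_0>L)\le \bar{\mathbb P}^x_\chi(S_0>L)\wedge\bar{\mathbb P}^x_\chi(T_0\ge k)$ and splitting the sum at a threshold $k\simeq L$, the problem reduces to two stretched-exponential tail bounds, $\bar{\mathbb P}^x_\chi(T_0>L)\le Ce^{-cL^{p}}$ and $\bar{\mathbb P}^x_\chi(S_0>L)\le Ce^{-cL^{p'}}$; the tail sums then absorb the extra polynomial factors and produce the claimed $De^{-dL^{p}}$, with $p$ replaced by $\min(p,p')$.

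The time tail $\bar{\mathbb P}^x_\chi(T_0>L)=\bar{\mathbb P}(\,L<\sigma_1\mid\sigma_0=0\,)$ is exactly Proposition~\ref{Prop_Time_Between_ren} (Spont) and Proposition~\ref{Prop_Time_Between_renCPIS} (IS). For the displacement tail I would write $\bar{\mathbb P}^x_\chi(S_0>L)\le \bar{\mathbb P}^x_\chi(\sigma_1>\sqrt L)+\bar{\mathbb P}^x_\chi(S_0>L,\,\sigma_1\le\sqrt L)$, the first term again being controlled by Propositions~\ref{Prop_Time_Between_ren}--\ref{Prop_Time_Between_renCPIS}. On $\{\sigma_1\le\sqrt L\}$ the front must move by more than $L$ within time $\sqrt L$. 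The rightward excursion $\{\exists\,s\le\sqrt L: r_s-x>L\}$ is handled by the coupling of Remark~\ref{CouplingMainThm}: $r_s$ is dominated by the rightmost particle of the contact process, whose rightward jumps occur at rate at most $\lambda p$, so the excursion is dominated by $\{\mathrm{Pois}(\lambda p\sqrt L)>L\}$ and has probability at most $e^{-cL}$ for large $L$ (alternatively via Lemma~\ref{rightmostBehavedClassicalCP}).

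The leftward excursion $\{\exists\,s\le\sqrt L: r_s-x<-L\}$ is the main obstacle, since the front can jump left by large amounts and is not dominated from below by a Poisson process. Here I would use that under $\bar{\mathbb P}^x_\chi$ we have $\sigma_0=0$, i.e. $F=\infty$, so the special property — in particular survival and invariance — holds from time $0$; consequently $r_s(c)$ equals, and is thus bounded below by, the rightmost particle of a \emph{surviving} hostile-start Spont process $\xi^{\delta_x^-}$ (passing through Remark~\ref{CouplingMainThm} in the IS case, where $r_s(\eta^c)\ge r_s(\xi^{\delta_x^-})$). It then remains to show that the rightmost particle of a surviving supercritical Spont process does not recede by $L$ within time $\sqrt L$ except with stretched-exponential probability; this non-recession large-deviation estimate I would derive from the linear-growth control of Remark~\ref{smallClusterSPON} together with the Durrett--Griffeath bounds behind Lemma~\ref{rightmostBehavedClassicalCP}. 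Collecting the three pieces gives $\bar{\mathbb P}^x_\chi(S_0>L)\le Ce^{-cL^{p'}}$, and feeding this and the time tail into the reduced sum of the second paragraph completes the proof. The genuinely delicate points are the fragility of the hostile start at small times — where the blocked neighbours must first heal before the infection can spread — and making the non-recession estimate uniform over the initial configuration $c\in\mathcal C^x$.
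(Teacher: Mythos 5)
Your renewal skeleton coincides with the paper's: decompose over $\{N(t)=n\}$, use Proposition~\ref{iidSequence} to reduce everything to the first increment $(\sigma_1, S_0)$, exploit $\sigma_{n+1}-\sigma_n\ge 1$ to bound the renewal measure of unit intervals by $1$, and split the resulting sum of minima of the two tails at a threshold of order $L$; the time tail via Propositions~\ref{Prop_Time_Between_ren} and~\ref{Prop_Time_Between_renCPIS} and the rightward excursion via a Poisson bound are also as in the paper. The genuine gap is the leftward excursion, which you yourself flag as ``the main obstacle'' and leave unproven. You propose to derive a non-recession large-deviation estimate for the surviving hostile-start process from Remark~\ref{smallClusterSPON} and the Durrett--Griffeath bounds behind Lemma~\ref{rightmostBehavedClassicalCP}, but neither gives this directly: Remark~\ref{smallClusterSPON} controls hitting times measured from the initial configuration (the front advances at least linearly on survival), not the absence of temporary recession of a front restarted at a later time; and conditioning on survival is a conditioning on the future, so one cannot simply restart at an intermediate time and invoke those bounds --- making that rigorous would require essentially the renewal machinery this lemma is meant to feed.

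The missing idea is much simpler and is exactly the paper's device. Under $\bar{\mathbb{P}}^x_\chi$ the invariance property holds on each renewal interval, so for $s\in[\sigma_n,\sigma_{n+1}]$ you may (as you already note) replace $r_s$ by the front of the hostile restart $\chi^{\delta^-_{r_{\sigma_n}}}_{\sigma_n,s}$. But this restart has a \emph{single} occupied site at time $\sigma_n$, so by Lemma~\ref{ActivePathIFF} its rightmost $1$ at any time in the interval is the endpoint of an active infection path issued from the single space-time point $(r_{\sigma_n},\sigma_n)$. Infection paths move only through transmission arrows, so the maximal displacement of the whole reachable cluster, $M^{(r_{\sigma_n},\sigma_n)}_{\sigma_{n+1}}:=\sup\bigl\{|y-r_{\sigma_n}|\,:\,(r_{\sigma_n},\sigma_n)\Longrightarrow(y,\cdot)\bigr\}$, is dominated on each side by a Poisson variable of parameter $\lambda(\sigma_{n+1}-\sigma_n)$: the maximum (resp.\ minimum) of the reached set advances by one unit only at arrow arrivals at its current boundary. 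Hence $\sup_s|r_s-r_{\sigma_n}|\le M^{(r_{\sigma_n},\sigma_n)}_{\sigma_{n+1}}$ controls left and right excursions simultaneously; on $\{\sigma_1<L/(2\lambda)\}$ the probability that this exceeds $L$ is a Poisson tail bounded by $Ce^{-cL}$, and off that event you pay the time tail. This one observation replaces your $\sqrt{L}$ split, the contact-process coupling, and the entire non-recession estimate, and it shows that, contrary to your concern, on renewal intervals the front \emph{is} Poisson-dominated in both directions.
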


\begin{proof}
    For~$t\in[0,\infty)$, consider~$\psi(t)$ the random variable defined by:\[
    \psi(t)= M^{(r_{\sigma_{N(t)}},\sigma_{N(t)})}_{\sigma_{N(t)+1}}\vee (\sigma_{N(t)+1}-\sigma_{N(t)})
    \]
    where~$M^{(x,s)}_t:=\sup\{|y-x|\,:\,(x,s)\overset{\mathcal H}{\Longrightarrow}(y,t)\}$ for~$x\in\mathbb Z$ and~$s,t\in[0,\infty)$ with~$s\leq t$. We aim to show that~$\Bar{\mathbb{P}}^x_\chi(\psi(t)>L)\leq De^{-dL^p}$. We have:
\begin{align}
    \Bar{\mathbb{P}}^x_{\chi} \left(\psi(t)>L\right) 
    &= \sum_{k=0}^{\infty}\Bar{\mathbb{P}}^x_\chi\left(\sigma_k < t,\, \sigma_{k+1} \geq t,\, \psi(t) > L,\right) \notag \\
    &= \sum_{k=0}^\infty \int_0^t \Bar{\mathbb{P}}^x_\chi\left( \left\{\sigma_{k+1} - \sigma_k \geq t-s\right\} \cap \left\{ M_{\sigma_{k+1}}^{(r_{\sigma_k,\sigma_k})} \vee (\sigma_{k+1} - \sigma_k) > L \right\} \right) \Bar{\mathbb{P}}^x_\chi\left(\sigma_k \in ds\right) \notag \\
    &= \sum_{k=0}^\infty \int_0^t \Bar{\mathbb{P}}^x_\chi\left( \left\{\sigma_1 \geq t-s\right\} \cap \left\{ M_{\sigma_1}^{(x,0)} \vee \sigma_1 > L \right\} \right) \Bar{\mathbb{P}}^x_\chi\left(\sigma_k \in ds\right)
    \label{eq:psi_tail_bound}
\end{align}
where we have usedthat the $(\sigma_{i+1}-\sigma_i)$ are i.i.d under $\bar{\mathbb P}_\chi^x$ (by Proposition~\ref{iidSequence}). To deal with the probability of the intersection of events in~\eqref{eq:psi_tail_bound}, we observe the following:
\[\Bar{\mathbb{P}}^x_\chi\left(M_{\sigma_1}^{(x,0)} \vee \sigma_1 > L\right) \leq \Bar{\mathbb{P}}_\chi^x\left(\left\{M_{\sigma_1}^{(x,0)} \vee \sigma_1 > L\right\}\cap \left\{\sigma_1<\frac{L}{2\lambda}\right\}\right) + \Bar{\mathbb{P}}^x_\chi\left(\sigma_1 > \frac{L}{2\lambda}\right).
\]

The first term of the above equation is bounded by the probability of a Poisson random variable with parameter~$\lambda\frac{L}{2\lambda}$ to be larger than~$L$, and therefore is smaller than~$Ce^{-cL}$. The second term is bounded by~$Ce^{-cL^p}$ either due to Proposition~\ref{Prop_Time_Between_ren} or due to Proposition~\ref{Prop_Time_Between_renCPIS}, depending on which process we consider.  Therefore, we have that~\eqref{eq:psi_tail_bound} is bounded above by:
\begin{align}
    &\sum_{k=0}^\infty \sum_{i=1}^{\lceil t \rceil} \int_{i-1}^i \left[ \Bar{\mathbb{P}}^x_\chi\left( \sigma_1 \geq t-s \right) \wedge \Bar{\mathbb{P}}^x_\chi\left( M_{\sigma_1}^{(x,0)} \vee \sigma_1 > L \right) \right] \Bar{\mathbb{P}}^x_\chi\left( \sigma_k \in ds \right) \notag \\
    &\leq \sum_{i=1}^{\lceil t \rceil} \left( Ce^{-c(t-i)^p} \wedge Ce^{-L^p} \right) \sum_{k=0}^\infty \Bar{\mathbb{P}}^x_\chi\left( \sigma_k \in [i-1,i] \right)
    \label{eq:tail_sum_bound}
\end{align}
and note that~$\sum_{k=0}^\infty \Bar{\mathbb{P}}^x_\chi\left(\sigma_k\in[i-1,i]\right) =\Bar{\mathbb{E}}^x_\chi\left[|\{n\in\mathbb N_0\,:\,\sigma_n\in[i-1,i]\}|\right]\leq 1$ since~$\sigma_{n+1}-\sigma_n\geq 1$ for all~$n\in\mathbb N_0$ by definition. Therefore, we can upper bound~\eqref{eq:tail_sum_bound} by~
\[
C\sum_{i=1}^\infty e^{-ci^p}\wedge e^{-cL^p}\leq C\lceil L \rceil e^{-cL^p} + C\sum_{i=\lceil L \rceil +1}^\infty e^{-ci^p}\leq De^{-dL^p}
\]
which concludes the proof. 
\end{proof}

We are now finally ready to prove the main results. We start with the proof of Theorem~\ref{thm_Spont_Speed}.  

\begin{proof}[Proof of Theorem~\ref{thm_Spont_Speed} and~\ref{thm_Speed_IS}]
    Firstly, we note that~$\sigma_0$ and~$r_{\sigma_0}$ are finite almost surely conditioned on survival, and that after the first renewal time, the probability measure conditioned on survival can be replaced by~$\bar{\mathbb{P}}_\chi^x$. In words, we are just ignoring the process until the first renewal time.
    
    We start by defining~$\mu_T:=\bar{\mathbb E}_\chi^x[\sigma_1-\sigma_0]$ and~$\mu_S:=\bar{\mathbb E}_\chi^x[r_{\sigma_1-r_{\sigma_0}}]$ the expected values of the temporal and spatial displacements in between renewals, respectively. In that way, we will obtain that~$\mu_{\mathrm{SP}}$ as in~\eqref{eq_speed_Spont} will be given by the ratio~$\mu_S/\mu_T$.

    To check that, we first observe that~$r_{\sigma_n}/\sigma_n\rightarrow \mu_{\mathrm{SP}}$ as~$n\rightarrow\infty$ almost-surely. Indeed,~$\sigma_0$ is finite, so is~$r_{\sigma_0}$, and thus, rewriting $\sigma_n$ and $r_{\sigma_n}$ as the sum of their i.i.d. increments (Proposition~\ref{iidSequence}), we can use the law of large numbers and we have that~$\sigma_n/n\rightarrow\mu_T$ and~$r_{\sigma_n}/n\rightarrow\mu_S$ as~$n\rightarrow\infty$. Since~$\sigma_n\geq n$ by definition of~$\sigma_n$, we have that~$\sigma_n/n\geq 1$ is bounded away from zero, thus~$(r_{\sigma_n}/n)/(\sigma_n/n)\rightarrow \mu_{\mathrm{SP}}$ as~$n\rightarrow\infty$ a.s.

    We will now work to transfer this convergence along integer times, i.e., we will show that~$r_n/n\rightarrow\infty$ as~$n\rightarrow\infty$ a.s. For~$t\in[0,\infty)$, let~$\sigma(t):=\sigma_{N(t)}=\sup\{\sigma_m\,:\,\sigma_m\leq t\}$ be the time of the last renewal before~$t$. Let~$\epsilon>0$ be given. We will show that:
    \begin{equation} \label{target_SLLN_Spont}
        \bar{\mathbb P}_\chi^x\left(\left|\frac{r_n}{n}-\frac{r_{\sigma(n)}}{\sigma(n)}\right|>\epsilon\right) \leq Ce^{-cn^p}.
    \end{equation}

    Note that~$r_{\sigma(n)}/\sigma(n)\rightarrow\mu_{\mathrm{SP}}$ as~$n\rightarrow \infty$ a.s. as it is simply a subsequence of a convergent sequence. Then, ~\eqref{target_SLLN_Spont} along with the Borel-Cantelli lemma would give us the desired claim. 
    
    Firstly, we have that:
    \begin{equation} \label{Target2_SLLN_Spont}
       \bar{\mathbb P}_\chi^x\left(\left|\frac{r_n}{n}-\frac{r_{\sigma(n)}}{\sigma(n)}\right|>\epsilon\right)\leq  \bar{\mathbb P}_\chi^x\left(\left|\frac{r_n}{n}-\frac{r_{n}}{\sigma(n)}\right|>\frac{\epsilon}{2}\right) + \bar{\mathbb P}_\chi^x\left(\left|\frac{r_n}{\sigma(n)}-\frac{r_{\sigma(n)}}{\sigma(n)}\right|>\frac{\epsilon}{2}\right). 
    \end{equation}

    We first bound the second term in~\eqref{Target2_SLLN_Spont}:
\begin{align*}
    \bar{\mathbb P}_\chi^x\left(\left|\frac{r_n}{\sigma(n)}-\frac{r_{\sigma(n)}}{\sigma(n)}\right|>\frac{\epsilon}{2}\right) 
    &\leq  \bar{\mathbb P}_\chi^x\left(|r_n-r_{\sigma(n)}|>\frac{\epsilon}{2}\sigma(n)\right) \\
    & \leq \bar{\mathbb P}_\chi^x\left(\sigma(n)<\frac{n}{2}\right)+\bar{\mathbb P}_\chi^x\left(|r_n-r_{\sigma(n)}|>\frac{\epsilon}{4}n\right);
\end{align*}

and these terms have the desired sub-exponential bounds due to Lemma~\ref{lem_timeSpaceRenewalBeforet}. Then, we bound the first term in~\eqref{Target2_SLLN_Spont}:
\begin{align*}
    \bar{\mathbb P}_\chi^x\left(\left|\frac{r_n}{n}-\frac{r_{n}}{\sigma(n)}\right|>\frac{\epsilon}{2}\right) 
    &\leq  \bar{\mathbb P}_\chi^x\left(|r_n|(n-\sigma(n)) > \frac{\epsilon}{2}n\sigma(n)\right) \\
    &\leq \bar{\mathbb P}_\chi^x\left(\sigma(n)<\frac{n}{2}\right)+\bar{\mathbb P}_\chi^x\left(|r_n|(n-\sigma(n)) > \frac{\epsilon}{4}n^2\right) \\
    & \leq  \bar{\mathbb P}_\chi^x\left(\sigma(n)<\frac{n}{2}\right) +
    \bar{\mathbb P}_\chi^x\left(\{|r_n|(n-\sigma(n))| > \frac{\epsilon}{4}n^2\}\cap \{|r_n|\leq 2\lambda n\}\right) + \bar{\mathbb P}_\chi^x\left(|r_n|>2\lambda n\right)\\
    & \leq ... + \bar{\mathbb P}_\chi^x\left(n-\sigma(n)>\frac{\epsilon}{8\lambda}n\right).
\end{align*}
The first and second term have the desired bound once more due to Lemma~\ref{lem_timeSpaceRenewalBeforet}. For the third term, just note that it is bounded by the probability of a Poisson random variable with parameter~$\lambda n$ to be larger than~$2\lambda n$. 

    Finally, to conclude that~$r_t/t\rightarrow \mu_{\mathrm{SP}}$ as~
    $t\rightarrow \infty$ we proceed as in the proof of Theorem 2.19 of~\cite{liggett1985interacting} noting that:
    \begin{align*}
        \bar{\mathbb P}_\chi^x\left(\max_{n\leq t\leq n+1}(r_t-r_n)\geq \epsilon n\right) \leq Ce^{-cn^p} \\
        \bar{\mathbb P}_\chi^x\left(\max_{n-1\leq t\leq n}(r_n-r_t)\geq \epsilon n\right) \leq Ce^{-cn^p},
    \end{align*}
    since both terms are bounded by the probability of a Poisson random variable with parameter~$\lambda$ to be larger than~$\epsilon n$. Because once more of the Borel-Cantelli lemma, we have that almost surely:\[
    \frac{r_t}{t}\xrightarrow[t\rightarrow\infty]{\mathrm{a.s.}}\mu_{\mathrm SP}.
    \] 
\end{proof}

We note that convergence in~$L^1$ also holds for the subset of initial configurations that allows us to guarantee the finiteness of the first moment of $r_{\sigma_0}$ (conditioned on survival). 
\begin{prop}\label{prop_convL1Spont}
    Let~$c\in\mathcal C$ be a configuration with finitely many sites in state~$1$. Then for~$\mu_{\mathrm{SP}}$ as in~\eqref{eq_speed_Spont} and for~$\mu_{\mathrm{IS}}$ as in~\eqref{eq_speed_IS}, it follows that:
    \[
    \frac{r(\xi_t^c)}{t}\xrightarrow[t\rightarrow\infty]{}\mu_{\mathrm{SP}} \quad \text{in }L^1 ~~~\text{ and } ~~~\frac{r(\eta_t^c)}{t}\xrightarrow[t\rightarrow\infty]{}\mu_{\mathrm{IS}} \quad \text{in }L^1,
    \]
    conditioned on the respective survival event.
\end{prop}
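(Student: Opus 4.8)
The plan is to deduce the $L^1$ statement from the already-established almost sure convergence (Theorems~\ref{thm_Spont_Speed} and~\ref{thm_Speed_IS}) by Vitali's convergence theorem: it suffices to show that the families $\{r(\xi_t^c)/t\}_{t\geq 1}$ and $\{r(\eta_t^c)/t\}_{t\geq 1}$ are uniformly integrable under the corresponding measures conditioned on survival. Since for any nonnegative $f$ one has $\mathbb E[f\mid \mathcal S_\chi(c)] \leq \mathbb E[f]/\mathbb P(\mathcal S_\chi(c))$ with $\mathbb P(\mathcal S_\chi(c))>0$, uniform integrability under $\mathbb P$ transfers to the conditioned measure through a bounded density; so I would work under $\mathbb P$ and use the survival event only to guarantee finiteness of the rightmost particle.

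The key input is a deterministic two-sided bound coming from the basic coupling of Remark~\ref{CouplingMainThm}. Let $x_-\leq x_+$ be the leftmost and rightmost $1$ of $c$, which are finite \emph{precisely because} $c$ has finitely many sites in state $1$; this is where the hypothesis enters (equivalently, it is what makes $\mathbb E|r_{\sigma_0}|<\infty$). Set $\zeta_0=\mathds 1\{c=1\}\in\{0,1\}^{\Z}$, so that $c\lesssim\zeta_0$, and let $(\zeta_t)$ be the contact process of parameter $\lambda p$ started from $\zeta_0$ and coupled as in Remark~\ref{CouplingMainThm}. Then $\chi_t^c\lesssim\zeta_t$ for $\chi\in\{\xi,\eta\}$, and comparing the nested sets of occupied sites gives $\ell(\zeta_t)\leq r(\chi_t^c)\leq r(\zeta_t)$ on the survival event, where $\ell,r$ denote leftmost and rightmost $1$'s; note that on $\mathcal S_\chi(c)$ the process $\zeta$ also survives (as $\mathcal S_\chi(c)\subseteq\mathcal S_\zeta(\zeta_0)$), so all these quantities are finite. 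Dominating further by Heaviside data $\zeta_0\lesssim h_{x_+}=\mathds 1_{(-\infty,x_+]}$ and $\zeta_0\lesssim\mathds 1_{[x_-,\infty)}$, attractivity of the contact process yields $r(\chi_t^c)\leq \bar R_t^{h_{x_+}}$ and, by the left–right reflection symmetry of the dynamics, a symmetric lower bound for $-r(\chi_t^c)$. Hence $|r(\chi_t^c)|\leq \hat R_t$ for a single random variable $\hat R_t$ built from two Heaviside contact processes, which is defined and finite unconditionally.

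It then remains to prove that $\{\hat R_t/t\}_{t\geq1}$ is uniformly integrable under $\mathbb P$. For this I would use that the running maximum $\bar R_t^{h_{x_+}}-x_+$ increases only through the arrow $N_1^{\bar R,\bar R+1}$, hence at constant rate at most $\lambda p$, and is therefore stochastically dominated by a Poisson random variable of mean $\lambda p\,t$; a Chernoff bound gives $\mathbb P(\bar R_t^{h_{x_+}}-x_+\geq Kt)\leq \mathbb P(\mathrm{Poisson}(\lambda p\,t)\geq Kt)\leq e^{-c\,Kt\log K}$ for $K$ large, uniformly in $t\geq1$. The same estimate applies to the reflected process, so that $\sup_{t\geq1}\mathbb E[(\hat R_t/t)^2]<\infty$, which yields the required uniform integrability. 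Combining $|r(\chi_t^c)|\leq \hat R_t$ with the transfer to the conditioned measures and invoking Vitali's theorem with the almost sure limits $\mu_{\mathrm{SP}}$ and $\mu_{\mathrm{IS}}$ concludes the proof for both processes.

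The main obstacle, and the only place the non-attractiveness of the IS process could bite, is obtaining the two-sided control of $r(\eta_t^c)$: since there is no intrinsic monotone comparison for IS, the lower bound $\ell(\zeta_t)\leq r(\eta_t^c)$ must be extracted entirely from the external coupling $\eta_t^c\lesssim\zeta_t$ of Remark~\ref{CouplingMainThm}, rather than from any self-comparison, and one must verify that conditioning on $\mathcal S_\eta(c)$ (rather than on survival of $\zeta$) keeps all extremal positions finite, which holds since $\mathcal S_\eta(c)\subseteq\mathcal S_\zeta(\zeta_0)$. The remaining steps—the Poisson domination of the front and the uniform-integrability bookkeeping—are routine.
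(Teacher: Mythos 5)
Your proposal is correct and follows essentially the same route as the paper: both deduce the $L^1$ limit from the almost sure convergence by proving uniform integrability of $r_t/t$ via a Poisson domination of the front's displacement (the hypothesis of finitely many initial $1$'s entering exactly where you place it), together with a transfer of the estimates to the measure conditioned on survival. The only differences are bookkeeping: the paper bounds $|r_n|\le L_c+J_n$ directly by counting transmission arrows and proves $L^1$ convergence along integer times before interpolating to all $t$, whereas you extract the two-sided bound from the coupling with the contact process (its left edge giving the lower bound) and handle the whole family $\{t\ge 1\}$ at once via Vitali.
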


\begin{proof}
In what follows, we denote the law of the process conditioned on survival as~$\mathbb P_{\mathcal S}$ and $\mathbb E_{\mathcal S}$ be its associated expectation operator.      We start by showing that the family~$\{r_n/n\,:\,n\in\mathbb N\}$ is uniformly integrable. We must show that for any~$\epsilon>0$ there exists~$L\in[0,\infty)$ such that
    \[
    \sup_{n\in\mathbb N}{\mathbb E}_{\mathcal S}\left[\frac{|r_n|}{n}\mathds{1}_{\{r_n/n>L\}}\right]<\epsilon.
    \]
    
    Fix~$n\in\mathbb N$. We observe that, by taking into account only transmission arrows (regardless of their type), the random variable~$|r_n|$ is bounded by~$L_c+J_n$ where~$J_n$ is a Poisson random variable with parameter~$\lambda n$ and~$L_c := \max\{|x|\,:x\in\mathbb Z\,\text{ and }\,c(x)=1\}$. Therefore, we have that:
    \begin{align*}
        {\mathbb E}_{\mathcal S}\left[\frac{|r_n|}{n}\mathds{1}_{\{|r_n|/n>L\}}\right] &\leq {\mathbb E}_{\mathcal S}\left[\left(\frac{|r_n|}{n}\right)^2\right]^{1/2}{\mathbb E}_{\mathcal S}\left[\mathds{1}_{\{|r_n|/n>L\}}\right]^{1/2} \\
        &\leq \left(\frac{{\mathbb E}_{\mathcal S}[(L_c+J_n)^2]}{n^2}\right)^{1/2}{\mathbb E}_{\mathcal S}\left[\mathds{1}_{\{(L_c+J_n)>Ln\}}\right]^{1/2} \\
        &\leq\left( \frac{x^2}{n^2} + \frac{L_c{\mathbb E}_{\mathcal S}[J_n]}{n^2} + \frac{\mathbb{E}_{\mathcal S}[J_n^2]}{n^2}\right)^{1/2} \mathbb P_{\mathcal S}\left(L_c+J_n>Ln\right) \\
        &\leq \left(\frac{\Tilde{c}n^2}{n^2}\right)^{1/2} \left(\frac{(L_c+\lambda n)}{Ln}\right)^{1/2} 
    \end{align*}
    where the first inequality is due to Hölder's inequality, the second inequality comes from the comparison with~$J_n$ and the final inequality follows by Markov's inequality. This can be smaller than~$\epsilon$ by choosing~$L$ large enough in terms of the constants~$\Tilde{c}$ and~$\lambda$ but independent of~$n$. 
    
    Therefore~$r_n/n\rightarrow \mu_{\mathrm SP}$ as~$n\rightarrow \infty$ in~$L^1$ considering the almost sure convergence we have proved before. It remains to show the same convergence across all times, i.e., for~$r_t/t$. To do that, we would have to show that for any~$\epsilon>0$ there exists~$t_0\in[0,\infty)$ large enough so that
    \begin{equation} \label{eq_target_L1}
        {\mathbb E}_{\mathcal S} \left[\frac{r_t}{t}-\mu_{\mathrm{SP}}\right]<\epsilon \quad \text{ for all }t\geq t_0.
    \end{equation}
    
    But we have that~\eqref{eq_target_L1} is bounded above by:
    \begin{equation}
    \underbrace{{\mathbb E}_{\mathcal S} \left[\left|\frac{r_t}{t}-\frac{r_{\lfloor t\rfloor}}{t}\right|\right]}_{(1)} + \underbrace{{\mathbb E}_{\mathcal S} \left[\left|\frac{r_{\lfloor t\rfloor}}{t} - \frac{r_{\lfloor t \rfloor}}{t}\right|\right]}_{(2)}
         + \underbrace{{\mathbb E}_{\mathcal S}\left[\frac{r_{\lfloor t \rfloor}}{t}- \mu_{\mathrm{SP}}\right]}_{(3)}
    \end{equation}
    and each of those terms can be made smaller than~$\epsilon /3 $ by choosing~$t$ sufficiently large. Indeed, the term in~$(1)$ is bounded by~$\frac{1}{t}{\mathbb E}_{\mathcal S}[r_t-r_{\lfloor t \rfloor}]\leq \frac{\lambda}{t}$; the term in~$(2)$ is bounded by~$\frac{{\mathbb E}_{\mathcal S}[r_{\lfloor t\rfloor}]}{t\lfloor t \rfloor}\leq \frac{\lambda t}{t\lfloor t \rfloor}$; finally, the term in~$(3)$ can be made smaller than~$\frac{\epsilon}{3}$ choosing~$t$ sufficiently large because~$r_n/n\rightarrow \mu_{\mathrm{SP}}$ in~$L_1$ as~$n\rightarrow \infty$. 
\end{proof}

Finally, we prove Theorem~\ref{thm_Spont_CLT} and~\ref{thm_CLT_IS} using a strategy very similar to the one made in \cite{mountford2016functional}. The idea is that, given a time $t \in [0, \infty)$, we decompose the value of $r_t$ by comparing it to the rightmost particle at the time of the last renewal before $t$. We also identify which renewal time this is by comparing it to its expected value.

\begin{proof}[Proof of Theorem~\ref{thm_Spont_CLT} and~\ref{thm_CLT_IS}]
    We have the following decomposition:
    \begin{equation*}
    \frac{r_t - \mu_{\mathrm{SP}}t}{\sqrt{t}} = 
    \underbrace{\frac{r_t - r_{\sigma(t)}}{\sqrt{t}}}_{\text{(1)}} + 
    \underbrace{\frac{r_{\sigma(t)} - r_{\sigma_{\lfloor t/\mu_T\rfloor}}}{\sqrt{t}}}_{\text{(2)}} + 
    \underbrace{\frac{r_{\sigma_{\lfloor t/\mu_T\rfloor}} - r_{\sigma_0}}{\sqrt{t}}}_{\text{(3)}} + 
    \underbrace{\frac{r_{\sigma_0}}{\sqrt{t}}}_{\text{(4)}}
\end{equation*}

We will show that the terms in~$(1)$,~$(2)$ and~$(4)$ go to zero in probability, and that the term in~$(3)$ converges in distribution to~$\mathcal{N}(0,\sigma^2_{\mathrm{SP}})$.

The fact that~$(4)$ converges to zero in probability is clear since~$r_{\sigma_0}$ is a finite random variable since~$\sigma_0$ is finite almost surely. Note that the numerator in~$(3)$ is just the sum of~$\lfloor t/\mu_T\rfloor$ i.i.d. terms of mean~$\mu_{S}$ because of Proposition~\ref{iidSequence}, so it converges in distribution to~$\mathcal N(0,\sigma_{\mathrm{SP}}^2)$ because of the central limit theorem. 

We work with the term in~$(1)$ first. Let~$\delta>0$ be given and consider the good event~$G:=\left\{\mid\frac{N(t)}{t}-\frac{1}{\mu_T}\mid\leq \delta\right\}$. For any given~$\epsilon>0$, we have that: 
\begin{equation} \label{term1Eq}
    \bar{\mathbb{P}}_\chi^x\left(\frac{|r_t - r_{\sigma_{N(t)}}|}{\sqrt{t}}>\epsilon\right)\leq \bar{\mathbb P}_\chi^x\left(\left\{|r_t-r_{\sigma_{N(t)}}|>\epsilon\sqrt{t}\right\}\cap G\right) + \bar{\mathbb P}_\chi^x(G^c)
\end{equation}

The second term in~\eqref{term1Eq} goes to zero as~$t\rightarrow\infty$ due to the renewal theorem. To deal with the first term, note that if we let~$n_1 = \lfloor t/\mu_T-\delta t\rfloor $ and~$n_2 = \lceil t/\mu_T+\delta t\rceil$, we can bound it by:
\begin{equation*}
    \bar{\mathbb P}_\chi^x\left(\bigcup_{n=n_1}^{n_2}\sup\{|r_s-r_{\sigma_n}|\,:\,s\in [\sigma_n,\sigma_{n+1})\}>\epsilon\sqrt{t}\right) \leq (2\delta t + 1)Ce^{-c(\epsilon\sqrt{t})^p}
\end{equation*}
where the inequality follows from a union bound along with Lemma~\ref{lem_timeSpaceRenewalBeforet}. 

To deal with~$(2)$, we proceed as follows. First, we rewrite the good event~$G$ as~$G = G_1\cup G_2$ with~$G_1 = \{N_t\in [\frac{t}{\mu}-\delta t,\frac{t}{\mu}]\}$ and~$G_2 = \{N_t\in [\frac{t}{\mu},\frac{t}{\mu}] \}$. Then, the following holds:
\begin{align}
        &\bar{\mathbb{P}}_{\chi}^x\left(\frac{r_{\sigma_{N(t)}}-r_{\sigma_{\lfloor t/\mu\rfloor}}}{\sqrt{t}}>\epsilon\right)  \leq \\ \label{threeTerms}
        &{\bar{\mathbb{P}}_{\chi}^x} \left(G^c\right) + \bar{\mathbb{P}}_{\chi}^x\left(\left\{\frac{r_{\sigma_{N(t)}}-r_{\sigma_{\lfloor t/\mu\rfloor}}}{\sqrt{t}}>\epsilon \right\} \cap G_1\right) + \bar{\mathbb{P}}_{\chi}^x\left(\left\{\frac{r_{\sigma_{N(t)}}-r_{\sigma_{\lfloor t/\mu\rfloor}}}{\sqrt{t}}>\epsilon\right\} \cap G_2\right)
\end{align}

The first term goes to 0 once more due to the renewal theorem. The second and third terms are bounded in a similar way, and therefore we will only prove the bound for the third one. We have that  
\begin{align*}
    \bar{\mathbb{P}}_{\chi}^x \left(\frac{r_{\sigma_{N(t)}}-r_{\sigma_{\lfloor t/\mu\rfloor}}}{\sqrt{t}}>\epsilon \cap G_2\right)  
    &\leq \bar{\mathbb{P}}_{\chi}^x \left(\{\max_{\lfloor\frac{t}{\mu}\leq i \leq \lfloor\frac{t}{\mu}+\delta t\rfloor}\frac{r_{\sigma_{i}}-r_{\sigma_{\lfloor t/\mu\rfloor}}}{\sqrt{t}}>\epsilon\} \cap G_2\right) \\
    & \leq \bar{\mathbb{P}}_{\chi}^x\left(\max_{\lfloor\frac{t}{\mu}\leq i \leq \lfloor\frac{t}{\mu}+\delta t\rfloor}|r_{\tau_{i}}-r_{\tau_{\lfloor t/\mu\rfloor}}| >\epsilon{\sqrt{t}}\right) \\
    &\leq \delta t \bar{\mathbb{P}}_{\chi}^x\left(|r_{\sigma_1}|>\epsilon\sqrt{t}\right) \leq \delta t\frac{\text{Var}(r_{\sigma_1})}{\epsilon^2 t}
\end{align*}

where the third inequality follows from a union bound and the fourth inequality follows from Kolmogorov's inequality; the final term on the above equation can be made arbitrarily small by taking $\delta$ sufficiently small. Note that $\text{Var}(r_{\sigma_1})$ is finite since $\bar{\mathbb{P}}_{\chi}^x(\sigma_1>t)\leq Ce^{-ct^p}$.
\end{proof}

\section{Detailed constructions of the processes}\label{appendix_construction}

\paragraph{}The main goal of this section is to obtain Lemma~\ref{ActivePathIFF}; we do it in a very careful way in order to avoid any circularities in the construction of those processes.

 \begin{definition}
    \textbf{(Infection path)} Consider a given a graphical construction\\
    $\mathcal{H}= \Big((N_1^{x,y})_{x\sim y}, (N_2^{x,y})_{x\sim y}, (U^x)\Big)$ and a time interval $I\subset [0,\infty)$. We say that $\gamma:I\rightarrow\mathbb{Z}$ is an infection path if $\gamma$ is a càdlàg function such that the following two properties are satisfied:

    \begin{itemize}
        \item \textbf{(P1)}: for all $s\in I$, $s\notin U^{\gamma(s)}$
        \item \textbf{(P2)}: if $\gamma(s-)\neq \gamma (s)$, then $\gamma(s)\in N^{\gamma(s-), \gamma(s)}_1$ for all $s\in I$
    \end{itemize}
    
     If there exists such $\gamma:I\rightarrow \mathbb{Z}$ with $x=\gamma(s)$ and $y=\gamma(t)$ for $s\leq t$ where $[s,t]\subset I$, we write either $(x,s)\rightarrow(y,t)$ or we write $(x,s)\overset{\mathcal{H}}{\rightarrow} (y,t)$ in case we aim to stress the graphical construction used. If $I$ is of the form $[t,\infty)$ for some $t\geq 0$ with $\gamma(t)=x$, we write $(x,t)\rightarrow \infty$. 
\end{definition}

From a graphical construction $\mathcal{H}$ and an initial configuration in $\{-1,0,1\}^{\mathbb Z}$, one can be tempted to construct either a Spont process or a contact process with inherited sterility declaring a site $x$ to be occupied at time $t$ if and only if there exists an infection path starting from an occupied site at time $0$ that reaches $x$ at time $t$ and avoids any sites of type $-1$. The problem with this is that then the construction of the process becomes circular: to construct the process and claim that a site is occupied if only if there exists such infection path, we find the problem that the existence of this path depends upon determining the states of the site this path goes through, which in turn requires the process to be constructed. We fix this by constructing those process as a limit of truncated processes of the same type.

\subsection{Graphical construction of the Spont process} \label{constructionSPON}

\paragraph{} For a given~$n\in\mathbb{N}$ and a given graphical construction~$\mathcal{H}$, let~$\mathbb{Z}_n=\mathbb{Z}\cap[-n,n]$ and let~$\mathcal{H}_n$ be the restriction of~$\mathcal{H}$ to the box~$\mathbb{Z}_n$, i.e., $\mathcal{H}_n = \Big((N_1^{x,y})_{x\sim y,x,y\in\mathbb{Z}^n}, (N_2^{x,y})_{x\sim y,x,y\in\mathbb{Z}^n}, (U^x)_{x\in\mathbb{Z}^n}\Big)$ where the processes $N^1_{x,y},N^2_{x,y}$ and $U^x$ belong to $\mathcal{H}$. 

Let $\xi_0\in\{-1,0,1\}^{\mathbb{Z}}$ be an initial configuration. Let $\xi_0^n$ be the restriction of this configuration to the box $\mathbb{Z}^n$, i.e., $\xi_0^n(x) = \xi_0(x)\mathbbm{1}_{\{x\in\mathbb{Z}_n\}}$. We will construct a Spont process in a box $(\xi_t^n)_{t\geq 0}$ taking values in $\{-1,0,1\}^{\mathbb{Z}_n}$ via a sequence of stopping times in the following way. 

Let $\sigma_0=0$ and suppose that we have constructed the process until some moment $\sigma_k$. Define $\sigma_{k+1}=\inf\{t>\sigma_k\,:\,\mathcal{H}_n\text{ has an arrival}\}$ (note that the inequality $\sigma_{k+1}>\sigma_k$ is strict). Let $\xi_t^n=\xi_{\sigma_k}^n$ for $t\in[\sigma_k,\sigma_{k+1})$. For the updated configuration at time $\sigma_{k+1}$, let $\xi_{\sigma_k+1}^n$ be given in the following way depending on which arrival we had at time $\sigma_{k+1}$:

\begin{itemize}
    \item If $\sigma_{k+1}\in N_{x,y}^1$ and if $\xi_{\sigma_k}^n(x)=1$ and $\xi_{\sigma_k}^n(y)=0$, let $\xi_{\sigma_{k+1}}^n(y) =1$ and $\xi_{\sigma_{k+1}}^n(z) = \xi_{\sigma_k}^n(z)$ if $z\neq y$. 
    \item If $\sigma_{k+1}^n\in N_{x,y}^2$ and if  $\xi_{\sigma_k}^n(y)=0$, let $\xi^n_{\sigma_{k+1}}(y) =-1$ and $\xi_{\sigma_{k+1}}^n(z) =\xi^n_{\sigma_k}(z)$ if $z\neq y$. 
    \item If $\sigma^n_{k+1}\in U^x$, let $\xi^n_{\sigma_{k+1}}(x)=0$ and $ \xi^n_{\sigma_{k+1}}(z) =\xi^n_{\sigma_k}(z) $ if $z\neq x$. 
\end{itemize}

Note that those three possibilities are mutually exclusive since the probability of two independent Poisson point processes having the same arrival time is zero.

\begin{claim} \label{ConstructionAttractiveClaim}
    Let $n,m\in\mathbb{N}$ with $n\leq m$ be given. It follows that $\xi^n_t(x)\leq\xi_t^m(x)$ for all $x\in\mathbb{Z}_n$ and for all $t\in[0,\infty)$.
\end{claim}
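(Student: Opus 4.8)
The plan is to prove the statement by the standard monotone coupling argument, run as an induction over the arrival times of the graphical construction. Since both $\xi^n$ and $\xi^m$ only change at arrivals of $\mathcal{H}_m$ (note that $\mathcal{H}_n$ is contained in $\mathcal{H}_m$ because $\mathbb{Z}_n \subseteq \mathbb{Z}_m$), and since on any finite time interval these arrivals form an almost surely finite increasing sequence $0 = \sigma_0 < \sigma_1 < \sigma_2 < \cdots$, it suffices to show that the pointwise order $\xi^n_t(x) \le \xi^m_t(x)$ on $\mathbb{Z}_n$ is preserved across each $\sigma_k$. The base case is immediate: by construction $\xi^n_0(x) = \xi_0(x) = \xi^m_0(x)$ for every $x \in \mathbb{Z}_n$, so the order holds with equality at time $0$.

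For the inductive step I would first record that each of the three local update rules is monotone as a function of the states it reads. Writing the infection update as $f(\xi(z),\xi(w))$ (the new state of the target $w$ given the source $z$), the blocking update as $g(\xi(w))$, and the healing update as the constant map to $0$, one checks on the finitely many inputs that $f$ is non-decreasing in both arguments, that $g$ sends $(-1,0,1)$ to $(-1,-1,1)$ and is therefore non-decreasing, and that the healing map is trivially monotone. Now fix an arrival at time $\sigma = \sigma_{k+1}$ and let $w$ be the affected site. If $w \notin \mathbb{Z}_n$ the states on $\mathbb{Z}_n$ are untouched in both processes and there is nothing to prove. If $w \in \mathbb{Z}_n$ and the responsible arrow lies in $\mathcal{H}_n$ (an interior arrival, so its source $z$, if any, also lies in $\mathbb{Z}_n$), then both processes apply the \emph{same} update map to $w$, evaluated at states satisfying the order by the induction hypothesis; monotonicity of $f$, $g$, or the healing map then yields $\xi^n_\sigma(w) \le \xi^m_\sigma(w)$, and all other sites are unchanged.

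The remaining, and genuinely delicate, case is a boundary arrival: an arrow of $\mathcal{H}_m$ pointing into a boundary site $w \in \{-n,n\}$ from a source $z \in \mathbb{Z}_m \setminus \mathbb{Z}_n$. Such an arrow is absent from $\mathcal{H}_n$, so only $\xi^m$ is updated at $w$, while $\xi^n_\sigma(w) = \xi^n_{\sigma-}(w)$. For an infection arrow this is harmless: $f$ can only raise the target, so $\xi^m_\sigma(w) = f(\xi^m_{\sigma-}(z),\xi^m_{\sigma-}(w)) \ge \xi^m_{\sigma-}(w) \ge \xi^n_{\sigma-}(w) = \xi^n_\sigma(w)$ and the order survives. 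The hard part will be the \emph{blocking} arrow from outside: since $g$ can lower an empty boundary site to $-1$ in $\xi^m$ alone, one must control the configuration in which $w$ is empty in both processes at time $\sigma-$ yet only $\xi^m$ receives the blocking mark. This is precisely the step I expect to be the main obstacle, and I anticipate its resolution hinges on the detailed interaction between the boundary blocking marks and the infection structure near $\partial \mathbb{Z}_n$; everything else reduces to the monotonicity of the local update maps together with the inductive coupling.
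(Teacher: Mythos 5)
Your induction setup, the monotonicity check of the three update maps, and the treatment of interior and incoming-infection arrivals are all correct, and you have isolated exactly the right case. But the proposal is not a proof: the case you defer — a type-$2$ arrow of $\mathcal{H}_m\setminus\mathcal{H}_n$ pointing from $\pm(n+1)$ into the boundary site $\pm n$ — is not a technical obstacle awaiting a cleverer argument; it is a counterexample to the claim as stated. Concretely, take $n=1$, $m=2$, an initial configuration with $\xi_0(1)=0$, and the positive-probability event that the first arrival of $\mathcal{H}_2$ before time $1$ is an arrival of $N_2^{2,1}$, at some time $t_1$. The Spont blocking update is spontaneous (it never reads the state of the source), so $\xi^2_{t_1}(1)=-1$, while $N_2^{2,1}$ is absent from $\mathcal{H}_1$, hence $\xi^1_{t_1}(1)=0>\xi^2_{t_1}(1)$. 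Moreover the violation is not confined to the boundary and cannot be removed by restricting to interior sites: if $\xi_0(0)=1$ and the next arrival is an arrival of $N_1^{0,1}$, the infection succeeds in $\xi^1$ (target empty) but fails in $\xi^2$ (target blocked), giving $\xi^1(1)=1$ while $\xi^2(1)=-1$, and this discrepancy of $1$'s can then spread further inside the box. So the statement itself is false, and no completion of your argument exists.

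For comparison, the paper's own proof overlooks precisely the case you flagged: it asserts that the first problematic moment "corresponds to a moment of arrival of $\mathcal{H}_n$", and in the case $\xi^n_P(x_P)=0>\xi^m_P(x_P)$ it only considers healing marks, which affect both processes. An arrival of $N_2^{\pm(n+1),\pm n}$ is an arrival of $\mathcal{H}_m$ that is not an arrival of $\mathcal{H}_n$, lowers $\xi^m$ alone, and is never addressed, so the paper's argument has the same hole your analysis exposes — there it is fatal. Two repairs are possible. First, one can redefine the truncated dynamics so that the process in $\mathbb{Z}_n$ is driven by all arrows $N_2^{x,y}$ whose target $y$ lies in $\mathbb{Z}_n$, including those with source outside the box; this is well defined exactly because Spont blocking ignores the source's state. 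Then the blocking and healing inputs at every site of $\mathbb{Z}_n$ coincide for the two processes, the only extra arrivals available to the larger box are incoming infection arrows, which can only raise $\xi^m$, and your monotone-update induction closes with no exceptional case. Second, one can discard Claim~\ref{ConstructionAttractiveClaim} altogether and prove Claim~\ref{SpontUniformLimitBox} for $n<m$ by the same insulation-path argument ($\gamma_L^M,\gamma_R^M$) that the paper already uses for $n>m$, which is symmetric in the two indices and does not require any comparison of the two truncated processes.
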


\begin{proof}
    By contradiction, assume that there exists a first problematic moment $P>0$ and a problematic site $x_P\in \mathbb{Z}_n$ where $\xi^n_P(x_P) >\xi^m_P(x_P)$ for some problematic site $x_P\in\mathbb{Z}_n$. This problematic moment corresponds to a moment of arrival of $\mathcal{H}_n$ that involves a change at site $x_P\in \mathbb{Z}_n$. If $\xi^n_P(x_P)=1$, this implies that there exists an infection path from some site in $\mathbb{Z}_n$ at time $0$ that reaches $x_P$ at time $P$ and this infection path always jumps to sites that are not blocked; by minimality of $P$, the existence of such path would also imply that $\xi^m_P(x_P)=1$. If $\xi_P^n(x_P)=0$, the healing mark at site $x_P$ at moment $P$ would also make this site empty for the process $(\xi^m_t)_{t\geq 0}$, and thus $\xi_P^m(x_P)=0$ as well. 
\end{proof}

\begin{claim} \label{SpontUniformLimitBox}
    Let~$N\in \mathbb N$ and~$T\in[0,\infty)$ be given. Then, there exists~$M\in \mathbb N$ with~$M=M(\mathcal{H},N,t)\geq N$ such that for any~$(x,t)\in[-N,N]\times[0,T]$ one has that $\xi_t^n(x)=\xi_t^m(x)$ for all $n,m\geq M$. 
\end{claim}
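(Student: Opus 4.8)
The plan is to exploit the finite speed at which information propagates in the graphical construction: a site can only be influenced by its neighbours through transmission arrows, so an edge carrying no transmission arrow on $[0,T]$ acts as an impenetrable barrier decoupling the two sides of the lattice during that window. I would first make this precise by calling an edge $\{k,k+1\}$ \emph{clear on $[0,T]$} if none of the four Poisson processes $N_1^{k,k+1},N_1^{k+1,k},N_2^{k,k+1},N_2^{k+1,k}$ has an arrival in $[0,T]$, and then locate such barriers far to the left and to the right of the box $[-N,N]$.

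To produce the barriers, note that the events $A_k=\{\{k,k+1\}\text{ is clear on }[0,T]\}$ are independent across distinct edges, since the Poisson processes attached to different edges are independent, and each satisfies $\mathbb P(A_k)=e^{-2\lambda T}>0$. As $\sum_{k\geq N}\mathbb P(A_k)=\infty$, the second Borel--Cantelli lemma yields that, almost surely, infinitely many $A_k$ occur with $k>N$, and by the same token infinitely many with $k<-N$. Hence for almost every realisation $\mathcal H$ one may choose $a>N$ and $b<-N$ such that both edges $\{a,a+1\}$ and $\{b-1,b\}$ are clear on $[0,T]$ --- this is exactly the point at which the dependence $M=M(\mathcal H,N,T)$ on the realisation enters. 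I then put $M=\max(a,-b)$ and $R=\{b,b+1,\dots,a\}$, noting that $[-N,N]\cap\mathbb Z\subseteq R$.

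The core step is to show that, for every $n\geq M$, the restriction of $(\xi^n_t)_{t\geq 0}$ to $R$ on $[0,T]$ evolves autonomously, driven only by the marks of $\mathcal H$ internal to $R$, namely the healing marks $U^y$ and the arrows $N_1^{x,y},N_2^{x,y}$ with $x,y\in R$. Indeed, $n\geq M$ guarantees $R\subseteq\mathbb Z_n$, so every site of $R$ has both neighbours inside the box, while clearness of the boundary edges $\{b-1,b\}$ and $\{a,a+1\}$ ensures that no arrow crosses the boundary of $R$ during $[0,T]$. Arguing by induction on the finitely many arrival times of $\mathcal H$ that fall in $R\times[0,T]$, and using that each update at a site $y\in R$ inspects only the current states of $y$ and its neighbours---all in $R$---one sees that $\xi^n_t|_R$ is a function of $\xi_0|_R$ and these internal marks alone. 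Since neither $\xi_0|_R$ nor the internal marks depend on $n$ once $n\geq M$, we conclude $\xi^n_t(x)=\xi^m_t(x)$ for all $x\in R$, in particular for all $x\in[-N,N]\cap\mathbb Z$, for every $t\in[0,T]$ and all $n,m\geq M$.

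The main obstacle I anticipate lies not in the probabilistic input but in the boundary bookkeeping: one must ensure that the truncation defining $\mathcal H_n$ cannot secretly create a discrepancy at the edge sites of $R$. This is precisely why I require the barrier edges to carry \emph{no} arrow of either type. For the Spont dynamics a type-$(-1)$ arrow $N_2^{x,y}$ blocks its target $y$ regardless of the source, so a box truncating such an arrow at its boundary would process it differently from a larger box that retains it; insisting on clear (arrow-free) barrier edges makes the truncation invisible and keeps the evolution on $R$ genuinely closed.
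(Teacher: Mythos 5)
Your proof is correct, but it takes a genuinely different route from the paper's. The paper controls the \emph{speed} of information flow: it chooses realization-dependent scales $N'$ and $M$ so that no infection path started outside $[-M,M]$ can reach $[-N',N']$, nor from outside $[-N',N']$ reach $[-N,N]$, within time $T$, and then runs a contradiction argument in which two auxiliary paths $\gamma_R^M,\gamma_L^M$, jumping inward along transmission arrows, bound how far discrepancies originating at the truncation boundary can have travelled by time $T$ (discrepancies can only move inward one site per transmission arrow). You instead suppress information flow entirely: an edge carrying no arrow of either type on $[0,T]$ is an exact barrier, and the second Borel--Cantelli lemma (independence across edges, each clear with probability $e^{-2\lambda T}>0$) supplies such barriers on both sides of $[-N,N]$ almost surely, after which the dynamics on the enclosed interval $R$ is manifestly autonomous, driven by marks common to $\mathcal H_n$ and $\mathcal H_m$ for all $n,m\geq M$. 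Your argument is shorter, avoids the sweeping-path bookkeeping, and your closing observation --- that the barrier must exclude type-$(-1)$ arrows as well, since in the Spont dynamics such an arrow blocks its target regardless of the source's state, so truncating it at a box boundary changes the evolution --- is exactly the subtlety that makes Claim~\ref{ConstructionAttractiveClaim} (monotonicity of the truncations) insufficient on its own. What the paper's finite-speed approach buys in exchange is quantitative control: its $M$ exceeds $N$ by roughly $\lambda T$ (Poisson range bounds), whereas the distance from $N$ to your nearest clear edge is geometric with success probability $e^{-2\lambda T}$, hence typically of order $e^{2\lambda T}$; since the claim is purely qualitative (an almost surely finite, realization-dependent $M$), this costs you nothing here.
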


\begin{proof}
    Note that we ask for $M\geq N\geq|x|$ so that both truncated processes $(\xi_t^n)_{t\geq 0}$ and $(\xi_t^m)_{t\geq 0}$ are well-defined in $(x,t)$ for any~$n,m\geq M$. 
    
    The probability that there exists an infection path $(y,0)\overset{\mathcal{H}}{\rightarrow} (x,t)$ with $|x-y|>k$ is smaller or equal that the probability of a Poisson process of rate $\lambda p$ to have more than $k$ realisations before $t$, which goes to zero as $k\rightarrow\infty$. 

    Thus, let $N'=N(\mathcal H, N,T)$ be such that for all $(x,t)\in[-N,N]\times[0,T]$ there is not $(y,s)$ such that $(y,s)\overset{\mathcal{H}}{\rightarrow}(x,t)$ with $|y|>N'$ and $s\leq t$. Let also $M=M(\mathcal H,N,T)$ be such that for all $(y,s)\in[-N',N']\times[0,T]$ there is not $(z,u)$ such that $(z,u)\overset{\mathcal{H}}{\rightarrow}(x,t)$ with $|z|>M$ and $u\leq s$. Let $n,m\geq M$ be given. 

    Assume, by contradiction, that there exists the first problematic moment $P$ where the property we aim to verify fails at some site $x_P$, i.e., $\xi_P^n(x_P)\neq \xi_P^m(x_P)$ for some $P\in[0,T]$ and $x_P\in[-N,N]$. If the arrival time in $(x_P,P)$  is a blocking or healing mark, it would affect both processes in the same way. If this arrival corresponds to a transmission arrow, we assume without loss of generality that $\xi^n_P(x_P)=1$ but $\xi^m_P(x_P)\neq 1$. This would imply the existence of an infection path $\gamma_P:[0,P]\rightarrow \mathbb Z$ such that $\gamma_P(s)\in[-N',N']$ for all $s\in[0,P]$ (by the choice of $N'$). If $n<m$, then $\xi_s^ n(\gamma(s))=1\leq \xi_s^m(\gamma(s))$ for all $s\in[0,P]$ because of Claim \ref{ConstructionAttractiveClaim} and thus $\xi^m_P(x_P)=1$. Suppose now that $n>m$. 

    First, let $M\in \mathbb N$ be given and let $\gamma_R^M:[0,T]\rightarrow\mathbb Z$ be the path (not necessarily an infection one) that starts from $M+1$ and jumps to the left as soon as it finds a transmission arrow to the left, namely: let $J_1 = \inf\{s\geq 0\,:\,s\in N_1^{M+1,M}\}$ and let $\gamma_R^M(t) = M+1$ for $t\in[0,J_1)$ and $\gamma_R^M(J_1)=M$. Suppose that we have defined this function until some time $J_k$. Then, let $J_{k+1} = \inf\{s\geq 0\,:\,s\in N_1^{M-(k+1),M-k}\}$ and let $\gamma_R^M(t) = M-(k+1)$ for $t\in [J_k,J_{k+1})$ and $\gamma_R^M(J_{k+1}) = M-k$. Consider this function built in that way until time $T$. Consider its analogue $\gamma_L^M:[0,T]\rightarrow \mathbb Z$ the path starting from $-M-1$ that jumps to the right as soon as it finds a transmission arrow to the right, constructed in the same way. Increase $M$ so that $\gamma_R^M(T)>N'$ and $\gamma_L^M(T)<-N'$. 

    Let $D_1=\inf\{t\geq 0\,:\,\exists x\in[-M,M]\text{ s.t. }\xi^n_t(x)\neq \xi^m_t(x)\}$. Let $x_{D_1}$ be such a problematic site. It must be the case that there was an arrival of $\mathcal H$ affecting that site, otherwise, processes would have continued to agree there. If this arrival was due to a healing mark or a blocking mark, the processes would still agree on that site because of similar arguments that we have discussed before. The processes would still agree on that site if the change at $\mathcal H$ happened due to a transmission arrow with origin site in $(-M,M)$. Therefore, the only way is if $x_{D_1}=M$ or $x_{D_1}=-M$ and that site was occupied in one process but not in the other due to an arrival from a transmission arrow originated from outside the box $[-M,M]$. 

    Assume that the sequence $D_k$ is defined for some $k\in \mathbb N$. Let $D_{k+1}=\inf\{t\geq 0\,:\,\exists \in[-M+k,M-k]\text{ s.t. }\xi^n_t(x)\neq \xi^m_t(x)\}$. The exact same conclusion that we had for the time $D_1$ holds for the time $D_{k+1}$, i.e., the change was cause by a transmission arrow arriving from outside the box $[-M+k,M-k]$ to one of its boundary points with the additional information that we also have $D_k<D_{k+1}$. Let $k^*=\inf\{k\in \mathbb N\,:\,D_k>T\}$. By definition, before $D_{k^{*}}$, processes agreed on the region $[-M+k^*,M-k^*]$. But note that due to the existence of the paths $\gamma_R^M$ and $\gamma_L^M$, one has $-M+k^*<-N'$ and $M-k^*>N'$. 
    
    Since $P<D_{k^*}$ because $P\leq T$ and since $\gamma_P(s)\in[-N',N']$ and processes agree every space-time point visited by path $\gamma_P$, and thus this path would have implied that also $\xi^m_P(x_P)=1$, leading to a contradiction because of the existence of such a problematic moment. 
\end{proof}

Because of Claim \ref{SpontUniformLimitBox}, we can let $(\xi_t)_{t\geq 0}$ be the Spont process  started from $\xi_0$ constructed with $\mathcal{H}$ defined by the (well-defined) limit for any $(x,t)\in\mathbb{Z}\times[0,\infty)$: 

\begin{equation}
    \xi_t(x) = \lim_{n\rightarrow\infty}\xi_t^n(x).
\end{equation}

\subsection{Graphical construction of the IS process} \label{constructionCPIS}

\paragraph{}We also construct the CPIS process via a limit of truncated processes. The main difference from the truncated argument made in Subsection \ref{constructionSPON} is that we don't have to truncate the graphical construction too and study the process in a box as sites of type~$-1$ cannot appear everywhere, only at sites neighbouring sites of type~$1$.

Let~$\eta_0\in\{-1,0,1\}^{\mathbb{Z}}$ be a given initial configuration and~$\mathcal{H}$ be a graphical construction. For a given~$n\in\mathbb{N}$, consider~$\eta_0^n$ the truncated initial configuration~$\eta_0^n(x) = \eta_0(x)\mathbbm{1}_{\{x\in\mathbb{Z}_n\}}$. Using~$\mathcal{H}$, we will construct~$(\eta_t^n)_{t\geq 0}$ a contact process with inherited sterility started from~$\eta_0^n$ in the following way. 

Let~$\sigma_0=0$ and suppose that the process has been built until some time~$\sigma_k$. Define~$\sigma_{k+1}=\inf\{t\geq \sigma_k\,:\,\mathcal{H}_{n+k}\text{ has an arrival}\}$. Note that we are slowly increasing the region where the arrival happens by one unit at every time increment. Again, as before, one has that~$\sigma_{k+1}$ is strictly bigger than~$\sigma_k$. Then, let~$\eta^n_t = \eta_{\sigma_k}^n$ for~$t\in[\sigma_k,\sigma_{k+1})$. We will now build~$\eta_{\sigma_{k+1}}^n$ depending on which arrival we had at moment~$\sigma_{k+1}$ (and note that those are mutually exclusive since the chances of independent Poisson point processes having the same arrival are zero):

\begin{itemize}
    \item If~$\sigma_{k+1}\in N^1_{x,y}$ and if~$\eta^n_{\sigma_k}(x)=1$ and~$\eta^n_{\sigma_k}(y)=0$, let~$\eta_{\sigma_{k+1}}^n(y)=1$ and ~$\eta_{\sigma_{k+1}}^n(z)=\eta_{\sigma_{k}}^n(z)$ for all~$z\neq y$.
    \item If~$\sigma_{k+1}\in N^2_{x,y}$ and if~$\eta^n_{\sigma_k}(x)=1$ and~$\eta^n_{\sigma_k}(y)=0$, let~$\eta_{\sigma_{k+1}}^n(y)=-1$ and let ~$\eta_{\sigma_{k+1}}^n(z)=\eta_{\sigma_{k}}^n(z)$ for all~$z\neq y$.
    \item If~$\sigma_{k+1}\in U^{x}$, then let~$\eta_{\sigma_{k+1}}^n(x)=0$ and let ~$\eta_{\sigma_{k+1}}^n(z)=\eta_{\sigma_{k}}^n(z)$ for all~$z\neq x$.
\end{itemize}

\begin{claim} \label{CPISActivePAthClaim}
    Let~$N\in\mathbb{N}$ and~$T\in[0,\infty)$ be given. Then, there exists~$M\in\mathbb{N}$ with $M=M(\mathcal{H},N,T)\geq N$ such that for any~$(x,t)$ in the space-time box~$[-N,N]\times[0,T]$ we have~$\eta_t^n(x)=\eta_t^m(x)$ for all~$n,m\geq M$. 
\end{claim}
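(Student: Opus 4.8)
The plan is to follow the template of the proof of Claim~\ref{SpontUniformLimitBox}, adapting it to the IS setting, where attractivity (Claim~\ref{ConstructionAttractiveClaim}) is no longer available. The guiding principle is that for the IS process the state $\eta_t(x)$ is determined by the graphical construction and the initial configuration inside a bounded space-time region, because every influence on $(x,t)$ propagates backward in time along transmission arrows. Indeed, a fertile individual at $(x,t)$ is the endpoint of an active infection path (Lemma~\ref{ActivePathIFF}), while a sterile individual at $(x,t)$ requires a type $-1$ arrow coming from a neighbour that was fertile, hence is again governed by an active infection path ending one site away. Since type $1$ and type $-1$ arrivals together form a Poisson process of rate $\lambda$ on each edge, the same tail estimate used in Claim~\ref{SpontUniformLimitBox} shows that, almost surely, there is a finite $N'=N'(\mathcal H,N,T)\geq N$ such that no infection path jumping along $N_1\cup N_2$ arrows connects any $(y,s)$ with $|y|>N'$ and $s\leq t$ to any point of the enlarged box $[-N-1,N+1]\times[0,T]$.

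Next I would control the inward propagation of a possible discrepancy between $\eta^n$ and $\eta^m$ by a barrier-path argument. I construct two monotone boundary paths $\gamma_R^M,\gamma_L^M$ exactly as in Claim~\ref{SpontUniformLimitBox}, except that now each path jumps inward as soon as it meets a transmission arrow of \emph{either} type (i.e.\ replacing $N_1^{\cdot,\cdot}$ by $N_1^{\cdot,\cdot}\cup N_2^{\cdot,\cdot}$ in the definitions of the jump times $J_k$), since in the IS process a sterile birth also transmits a discrepancy across an edge. Increasing $M\geq N'$ if necessary, I arrange $\gamma_R^M(T)>N'$ and $\gamma_L^M(T)<-N'$. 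I would then observe that whenever $n,m\geq M$ the initial configurations $\eta_0^n,\eta_0^m$ coincide on $\mathbb Z_M$, and both expanding windows $\mathbb Z_{n+k},\mathbb Z_{m+k}$ contain $\mathbb Z_M$ from the first step; hence any discrepancy inside $[-M,M]$ can only be seeded by transmission arrows entering $[-M,M]$ from the outside.

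The core is a contradiction on the first problematic point. Suppose $(x_P,P)\in[-N,N]\times[0,T]$ is the earliest point at which $\eta_P^n(x_P)\neq\eta_P^m(x_P)$. Define the nested times $D_k=\inf\{t\geq 0:\eta^n_t\text{ and }\eta^m_t\text{ disagree somewhere in }[-M+k,M-k]\}$ and argue, exactly as in Claim~\ref{SpontUniformLimitBox}, that each disagreement at $\pm(M-k)$ must be caused by a transmission arrow of type $1$ or $-1$ arriving from outside $[-M+k,M-k]$ (healing marks and interior arrows preserve agreement), so that $D_k<D_{k+1}$. Letting $k^*=\inf\{k:D_k>T\}$, the barrier paths force $-M+k^*<-N'$ and $M-k^*>N'$, so the two processes agree throughout $[-N',N']\times[0,T]$. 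Since $P\leq T$ and, by the choice of $N'$, the active infection path witnessing the discrepancy at $(x_P,P)$ — whether it certifies a fertile individual at $(x_P,P)$ or a fertile neighbour producing a sterile individual there — stays inside $[-N',N']$, that same path is valid in both processes and forces $\eta^n_P(x_P)=\eta^m_P(x_P)$, a contradiction.

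The main obstacle, and the essential departure from Claim~\ref{SpontUniformLimitBox}, is that without attractivity one cannot dispatch the case $n<m$ by the monotone comparison $\eta^n\lesssim\eta^m$; the barrier-path argument must therefore be run symmetrically for both orderings of $n$ and $m$, and care is needed because in the IS process a discrepancy is carried not only by fertile births (type $1$ arrows) but also by sterile births (type $-1$ arrows from a fertile neighbour). This is exactly what dictates the two modifications above: enlarging the target box to $[-N-1,N+1]$ so that the ancestry of sterile individuals is captured, and letting the boundary paths react to arrows of both types. A secondary point to verify is that the differing expanding windows $\mathbb Z_{n+k}$ and $\mathbb Z_{m+k}$ do not spoil the comparison: since arrivals outside the influence region never alter the configuration inside $[-M,M]$, the bookkeeping mismatch in the step times $\sigma_k$ is irrelevant to the states, and the comparison can be carried out directly on the space-time configurations.
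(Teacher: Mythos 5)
Your proof is correct, but it is not the route the paper takes for this claim. You transplant the full machinery of Claim~\ref{SpontUniformLimitBox} --- the influence region $N'$, the barrier paths $\gamma_R^M,\gamma_L^M$, and the nested first-disagreement times $D_k$ --- to the IS setting, with exactly the two modifications that matter: the influence regions and barrier paths must react to arrows of \emph{both} types (a sterile birth transmits a discrepancy across an edge, so the relevant rate is $\lambda$, not $\lambda p$), and the argument must be run symmetrically in $n$ and $m$ because Claim~\ref{ConstructionAttractiveClaim} (attractivity) is unavailable for IS. The paper's own proof is much shorter and uses no barrier paths at all: it fixes two nested influence regions ($N'$ for $[-N,N]$, then $M$ for $[-N',N']$, both defined via infection paths), locates the first disagreement in $[-N,N]$ at a transmission arrow from $\pm(N+1)$ whose origin is fertile in one process only, takes the active infection path $\gamma$ certifying that fertility (confined to $[-N',N']$ by the choice of $N'$), and then argues that any sterile individual of the other process blocking $\gamma$ would have to descend from a fertile site present in one initial configuration but not the other, hence at distance greater than $M$, contradicting the choice of $M$. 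The trade-off is instructive: the paper's trace-back step is the informal point of its argument --- a blocking sterile individual need not descend in a single step from an initial-configuration difference, since its fertile parent may itself be certified by a path that is blocked in the other process, and so on; controlling that recursion is exactly what your nested-box/barrier-path induction supplies. So your argument is heavier but more airtight on precisely the issue (propagation of discrepancies through sterile births) that the paper's proof treats loosely, while the paper's argument is leaner at the cost of an assertion that your machinery would be needed to justify in full.
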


\begin{proof}
    Let~$N\in\mathbb{N}$ and~$T\in[0,\infty)$ be given. Let~$N'\in\mathbb{N}$ be large enough so that for all~$y\in\mathbb Z$ with~$|y|>N'$ and for all~$s\in[0,T]$ one has that~$(y,s)\overset{\mathcal{H}}{\nrightarrow}(x,t)$ for any~$(x,t)\in[-N,N]\times[0,T]$. Now, for the space-time box~$[-N',N']\times[0,T]$, consider~$M\in\mathbb N$ large enough so that for all~$z\in\mathbb{Z}$ with~$|z|>M$ and for all~$s\in[0,T]$, one has that~$(z,s)\overset{\mathcal{H}}{\nrightarrow}(x,t)$ for any~$(x,t)\in[-N',N']\times[0,T]$ with $t\geq s\geq 0$. 

    Let~$n,m\geq M$ be given; assume that~$P\in[0,T]$ is the first problematic moment where the property we are looking for is not verified, i.e.,~$P$ is the first moment for which~$\eta^n_t(y)\neq \eta_t^m(y)$ for some~$y^*\in[-N,N]$ and for some~$t\in[0,T]$. Let~$y^*$ be the site in~$[-N,N]$ where this property was first broken at moment~$P$. Note that~$P$ has to be an arrival moment for~$\mathcal{H}_{n+1}$: otherwise the processes in~$[-N,N]$ would continue to be the same as the moment immediately before and the processes would then still agree on that region. 

    Since at moment~$P-$ we have~$\eta_{P-}^n(y)=\eta_{P-}^m(y)$ for all~$y\in[-N,N]$, it is easy to see that if the arrival of~$\mathcal{H}_{n+1}$ happened involving sites in the interior of~$[-N,N]$, then the processes would still agree after this arrival which would contradict the existence of~$P$. Therefore, the change must involve either an arrival of~$N^{x^*,y^*}_1$ or~$N^{x^*,y^*}_2$ where~$(x^*,y^*)=(N+1,N)$ or~$(x^*,y^*)=(-N-1,-N)$ and the processes must be such that~$x^*$ was occupied in one of the processes but not on the other.

    Suppose without loss of generality that~$\eta^n_{P-}(x^*)=1$; we want to conclude that also~$\eta^m_{P-}(x^*)=1$. Indeed, if~$\eta^n_{P-}(x^*)=1$, then there exists~$\gamma:[0,P]\rightarrow\mathbb{Z}$ active infection path for~$\eta^n_t$ such that~$\eta^n_0(\gamma(0))=1$,~$\gamma(P)=x^*$ and~$\gamma(t)\in[-N',N']$ for all~$t\in[0,P]$ by the choice of~$N'$. This path~$\gamma$ would not be active if any of the visited sites suffered the influence of a sterile individual that came from a fertile site say at position~$P^*$ that was present at the initial moment in~$\eta^m_0$ but not on~$\eta^n_0$; but then this site is such that~$|P^*|>M$ since~$n,m\geq M$. But the existence of such a site contradicts the choice of~$M$. 
\end{proof}

Therefore, we can define the contact process with inherited sterility constructed with~$\mathcal{H}$ started from~$\eta_0$ as the (well-defined) limiting process for any~$x\in\mathbb Z$ and any~$t\in[0,\infty)$:

\begin{equation}
    \eta_t(x) = \lim_{n\rightarrow\infty}\eta^n_t(x)
\end{equation}

\paragraph{}We are finally ready to prove Lemma~\ref{ActivePathIFF} that enable processes to be defined using paths.

\begin{proof}[Proof of Lemma~\ref{ActivePathIFF}]
    Let~$(x,t)\in\mathbb Z\times [0,\infty)$ and~$\mathcal{H}$ a graphical construction be given. 
    Then, there exists~$N\in\mathbb{N}$ such that~$(y,s)\overset{\mathcal{H}}{\nrightarrow} (x,t)$ for any~$y$ with~$|x-y|>N$ and for any~$s\in[0,t]$. Let~$N^* = |x|+N$. Consider the space-time box~$[-N^*,N^*]\times[0,t]$. 

    Because of Claims~\ref{SpontUniformLimitBox} and~\ref{CPISActivePAthClaim}, there exists~$M\in\mathbb{N}$ such that~$\chi^{M+1}_s(z) = \chi_s(z)$ for all~$(z,s)\in[-N^*,N^*]\times[0,t]$ (and also~$\chi^{M+k}_s(z) =\chi^{M+1}_s(z)$ for any~$k\in\mathbb N$). In particular,~$\chi^{M+1}_t(x)=1$ and by construction of the truncated process, there exists~$\gamma:[0,t]\rightarrow\mathbb{Z}$ satisfying:
    \begin{itemize}
        \item $\beta_0(\gamma(0))=1$, so that~$\gamma$ starts from an occupied site;
        \item ~$\gamma(0)\in[-N,N]$,otherwise this would contradict the choice of~$N$;
        \item ~$\gamma(t) =x$, so that the infection is carried until~$x$;
        \item ~$\gamma(s)\in[-N^*,N^*]$ for all~$s\in[0,t]$, otherwise, since~$N^*>N$, this would also contradict the choice of~$N$. 
        \item ~$\chi^{M+1}_s(\gamma(s))=1$ for all~$s\in[0,t]$
    \end{itemize}

    Finally, since~$\chi^{M+1}_s(\gamma(s)) = \chi_s(\gamma(s))$ for all~$s\in[0,t]$, this proves the existence of an active infection path as the one claimed. The converse is clearly true. 
\end{proof}

\section{Open questions}
We would like to conclude this work with a few open questions that we find interesting and may be the subject of future work. 
\begin{itemize}
    \item In this work, we have only proven the linearity of the inherited contact process in a supercritical regime for the Spont process. However, we believe that the supercritical region is more important for IS and that the linearity result is true in a wider range of parameters (and perhaps throughout the entire supercritical phase for the IS process). We can observe this phenomenon in the simulations below: in Figure~\ref{fig:sp_sub_critical}, we used parameters that appear to be subcritical for Spont and we observe that IS still survives and still exhibits linear growth.
    \item In the parameter regime in which all processes survive, we may wish to compare the different speeds. We see in Figure~\ref{fig:the_three} that the Spont process appears to be much slower than the other two processes. 
    \item We may also be interested in the monotonicity of the velocity with respect to the parameters. We know that the Spont process is monotonous with respect to $p$ and that IS is not and both processes are not monotonous with respect to $\lambda $, in the sense of the existence of a coupling (cf \cite{velasco2024} for details).  But what about the overall behaviour embodied by speed? 
    \item Finally, our work is really one-dimensional dependent but in higher dimensions, an asymptotic shape seems to appear as can be seen in Figure~\ref{fig:fig_2d}. 
    \end{itemize}
    \begin{center}

\begin{figure}[h!]
    \centering

    \begin{subfigure}{0.34\textwidth}
        \centering
        \includegraphics[width=\linewidth, angle=180]{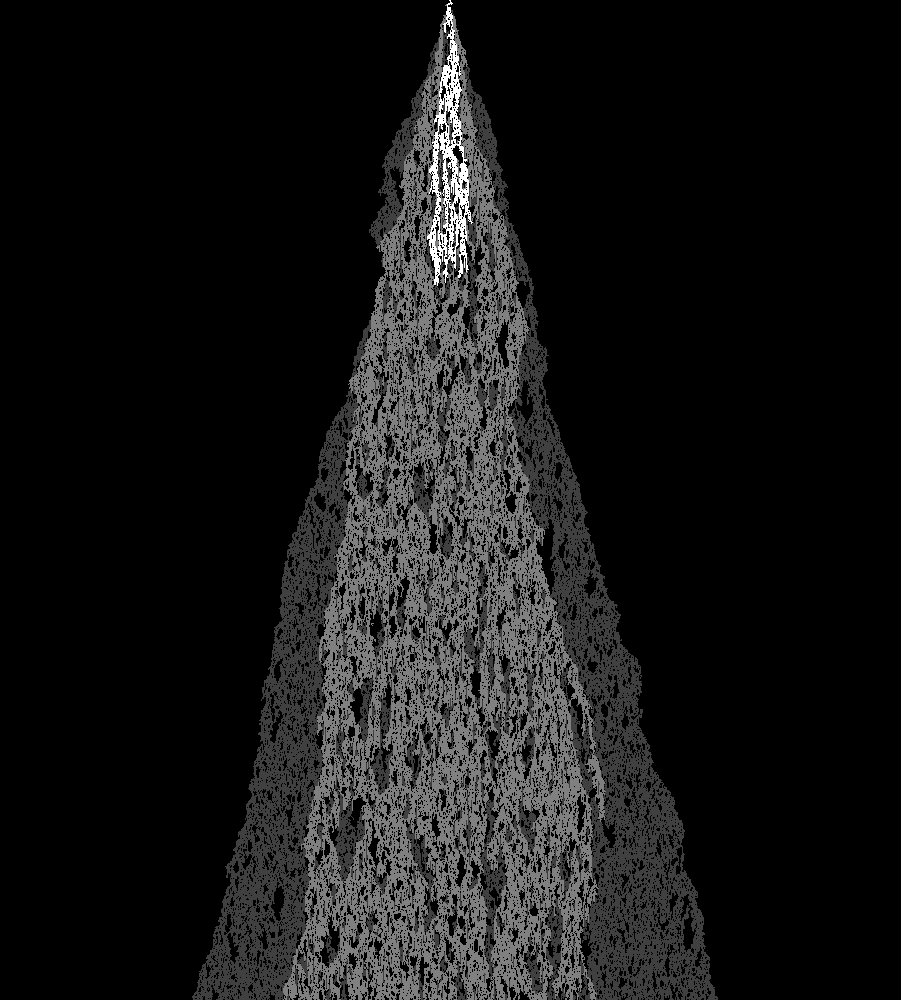}
        \caption{$\lambda = 2.77$, $p = 0.9$}
        \label{fig:sp_sub_critical}
    \end{subfigure}
    \hspace{0.03\textwidth} 
    \begin{subfigure}{0.29\textwidth}
        \centering
        \includegraphics[width=1.5\linewidth, height=1.3\linewidth, angle=180]{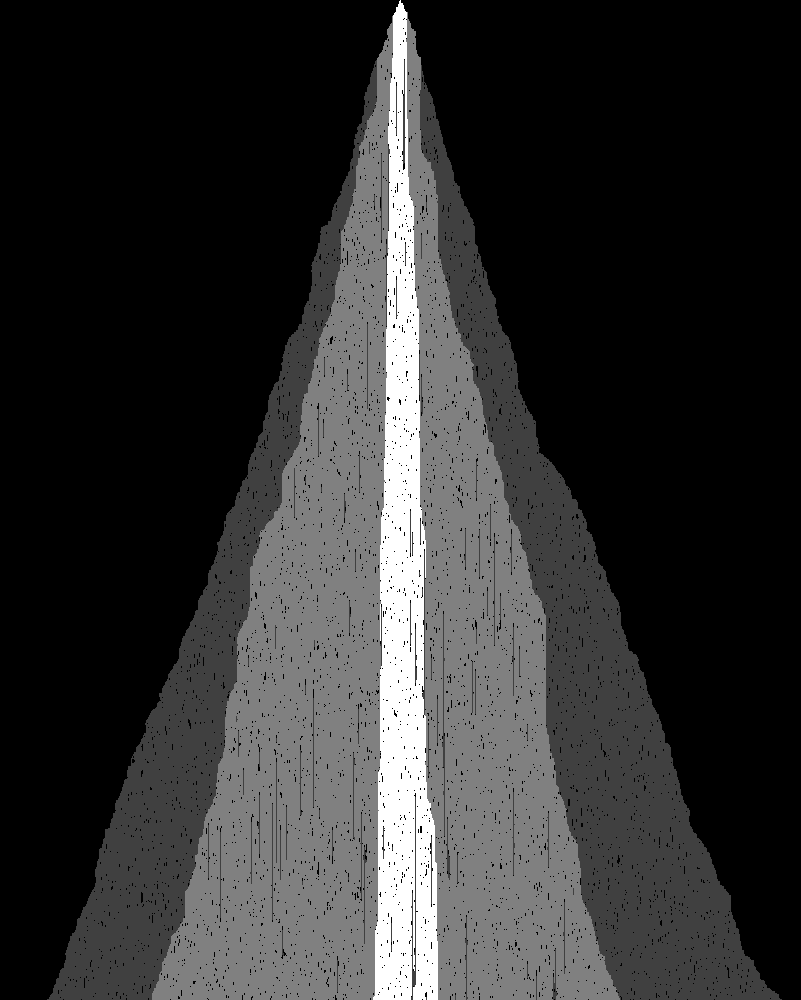}
        \caption{$\lambda = 20$, $p = 0.975$}
        \label{fig:the_three}
    \end{subfigure}

    \vspace{0.6em}
   
    \begin{subfigure}{0.4\textwidth}
        \includegraphics[width=\linewidth]{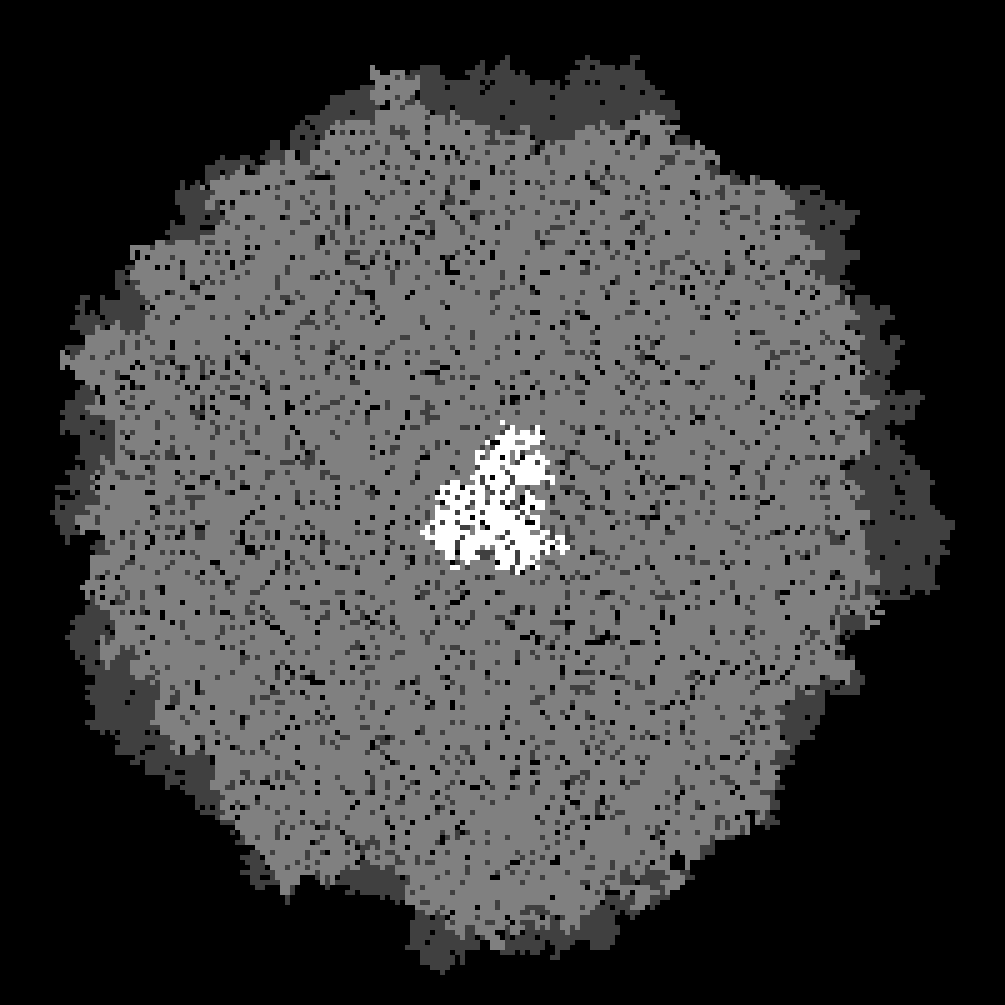}
        \caption{$d = 2$, $\lambda = 5.5$, $p = 0.9$, fixed time}
        \label{fig:fig_2d}
    \end{subfigure}

    \caption{Coupled processes. (a)–(b): one-dimensional cases. (c): two-dimensional case.}
    \label{fig:placeholder}
\end{figure}

    \end{center}


\newpage 

@article{harris1974contact,
  title={Contact interactions on a lattice},
  author={Harris, Theodore E.},
  journal={The Annals of Probability},
  volume={2},
  number={6},
  pages={969--988},
  year={1974},
  publisher={Institute of Mathematical Statistics}
}

@article{velasco2024,
  title={Extinction and survival in inherited sterility},
  author={Velasco, Sonia},
  journal={arXiv preprint arXiv:2404.11963},
  year={2024},
  note={\url{https://arxiv.org/abs/2404.11963}}
}

@article{kuczek1989central,
  title={The central limit theorem for the right edge of supercritical oriented percolation},
  author={Kuczek, Thomas},
  journal={The Annals of Probability},
  volume={17},
  number={4},
  pages={1322--1332},
  year={1989},
  publisher={Institute of Mathematical Statistics},
  doi={10.1214/aop/1176991237}
}

@article{mountford2016functional,
  title={Functional central limit theorem for the interface of the symmetric multitype contact process},
  author={Mountford, Thomas and Valesin, Daniel},
  journal={ALEA Latin American Journal of Probability and Mathematical Statistics},
  volume={13},
  number={1},
  pages={481--519},
  year={2016},
  doi={10.30757/alea.v13-20}
}

@article{durrett1984oriented,
  title={Oriented percolation in two dimensions},
  author={Durrett, Richard},
  journal={The Annals of Probability},
  volume={12},
  number={4},
  pages={999--1040},
  year={1984},
  publisher={Institute of Mathematical Statistics}
}

@book{billingsley2013convergence,
  title={Convergence of probability measures},
  author={Billingsley, Patrick},
  year={2013},
  publisher={John Wiley \& Sons}
}

@article{remenik2008contact,
  title={The contact process in a dynamic random environment},
  author={Remenik, Daniel},
  journal={The Annals of Applied Probability},
  volume={18},
  number={6},
  pages={2392--2420},
  year={2008},
  doi={10.1214/08-AAP525}
}

@article{broman2007stochastic,
  title={Stochastic domination for a hidden Markov chain with applications to the contact process in a randomly evolving environment},
  author={Broman, Erik I.},
  journal={The Annals of Probability},
  volume={35},
  number={6},
  pages={2263--2293},
  year={2007},
  doi={10.1214/0091179606000001187}
}

@article {steif2007critical,
    AUTHOR = {Steif, Jeffrey E. and Warfheimer, Marcus},
     TITLE = {The critical contact process in a randomly evolving
              environment dies out},
   JOURNAL = {ALEA Lat. Am. J. Probab. Math. Stat.},
  FJOURNAL = {ALEA. Latin American Journal of Probability and Mathematical
              Statistics},
    VOLUME = {4},
      YEAR = {2008},
     PAGES = {337--357},
      ISSN = {1980-0436},
   MRCLASS = {60K37},
  MRNUMBER = {2461788},
MRREVIEWER = {Rongfeng\ Sun},
}

@article{cardona2024contact,
  title={The contact process on dynamical random trees with degree dependence},
  author={Cardona-Tob{\'o}n, Natalia and Ortgiese, Marcel and Seiler, Marco and Sturm, Anja},
  journal={arXiv preprint arXiv:2406.12689},
  year={2024},
  note={\url{https://arxiv.org/abs/2406.12689}}
}

@article{leite2024contact,
  title={The contact process over a dynamical d-regular graph},
  author={Leite Baptista da Silva, Gabriel and Imbuzeiro Oliveira, Roberto and Valesin, Daniel},
  journal={Annales de l'Institut Henri Poincaré, Probabilités et Statistiques},
  volume={60},
  number={4},
  pages={2849--2877},
  year={2024},
  publisher={Institut Henri Poincaré},
  doi={10.1214/24-AIHP1405}
}

@article {schapira2023contact,
    AUTHOR = {Schapira, Bruno and Valesin, Daniel},
     TITLE = {The contact process on dynamic regular graphs: subcritical
              phase and monotonicity},
   JOURNAL = {Ann. Probab.},
  FJOURNAL = {The Annals of Probability},
    VOLUME = {53},
      YEAR = {2025},
    NUMBER = {2},
     PAGES = {753--796},
      ISSN = {0091-1798,2168-894X},
   MRCLASS = {60J85 (60K35 82C22)},
  MRNUMBER = {4888144},
MRREVIEWER = {David\ J.\ Aldous},
       DOI = {10.1214/24-aop1721},
       URL = {https://doi.org/10.1214/24-aop1721},
}

@article {linker2020contact,
    AUTHOR = {Linker, Amitai and Remenik, Daniel},
     TITLE = {The contact process with dynamic edges on {$\Bbb Z$}},
   JOURNAL = {Electron. J. Probab.},
  FJOURNAL = {Electronic Journal of Probability},
    VOLUME = {25},
      YEAR = {2020},
     PAGES = {Paper No. 80, 21},
      ISSN = {1083-6489},
   MRCLASS = {60K35 (60K37)},
  MRNUMBER = {4125785},
MRREVIEWER = {Elena\ A.\ Zhizhina},
       DOI = {10.1214/20-ejp480},
       URL = {https://doi.org/10.1214/20-ejp480},
}

@article{hilario2022results,
  title={Results on the contact process with dynamic edges or under renewals},
  author={Hil{\'a}rio, Marcelo and Ungaretti, Daniel and Valesin, Daniel and Vares, Maria Eul{\'a}lia},
  journal={Electronic Journal of Probability},
  volume={27},
  pages={1--31},
  year={2022},
  publisher={The Institute of Mathematical Statistics and the Bernoulli Society},
  doi={10.1214/22-EJP811}
}

@article{sznitman2004topics,
  title={Topics in random walks in random environment},
  author={Sznitman, Alain-Sol},
  journal={ICTP Lecture Notes Series},
  volume={17},
  pages={203--266},
  year={2004}
}

@article{mountford2019asymmetric,
  title={The asymmetric multitype contact process},
  author={Mountford, Thomas and Pantoja, Pedro Luis Barrios and Valesin, Daniel},
  journal={Stochastic Processes and their Applications},
  volume={129},
  number={8},
  pages={2783--2820},
  year={2019},
  publisher={Elsevier},
  doi={10.1016/j.spa.2018.09.009}
}

@article {garetmarchand2014bacteria,
    AUTHOR = {Garet, Olivier and Marchand, R{\'e}gine},
     TITLE = {Growth of a population of bacteria in a dynamical hostile
              environment},
   JOURNAL = {Adv. in Appl. Probab.},
  FJOURNAL = {Advances in Applied Probability},
    VOLUME = {46},
      YEAR = {2014},
    NUMBER = {3},
     PAGES = {661--686},
      ISSN = {0001-8678},
   MRCLASS = {60K35 (82B43 92D25)},
  MRNUMBER = {3254336},
       DOI = {10.1239/aap/1409319554},
       URL = {http://dx.doi.org/10.1239/aap/1409319554},
}

@book{lawler2010random,
  title={Random walk: a modern introduction},
  author={Lawler, Gregory F and Limic, Vlada},
  volume={123},
  year={2010},
  publisher={Cambridge University Press}
}

@article {galvespresutti,
    AUTHOR = {Galves, Antonio and Presutti, Errico},
     TITLE = {Edge fluctuations for the one-dimensional supercritical
              contact process},
   JOURNAL = {Ann. Probab.},
  FJOURNAL = {The Annals of Probability},
    VOLUME = {15},
      YEAR = {1987},
    NUMBER = {3},
     PAGES = {1131--1145},
      ISSN = {0091-1798},
     CODEN = {APBYAE},
   MRCLASS = {60K35 (60F17)},
  MRNUMBER = {893919 (89c:60117)},
MRREVIEWER = {W. David Wick},
       URL ={http://links.jstor.org/sici?sici=0091-1798(198707)15:3<1131:EFFTOD>2.0.CO;2-1&origin=MSN},
}

@article {BCMRT13,
    AUTHOR = {Blondel, Oriane and Cancrini, N. and Martinelli, Fablio and Roberto,
              C. and Toninelli, Cristina},
     TITLE = {Fredrickson-{A}ndersen one spin facilitated model out of
              equilibrium},
   JOURNAL = {Markov Process. Related Fields},
  FJOURNAL = {Markov Processes and Related Fields},
    VOLUME = {19},
      YEAR = {2013},
    NUMBER = {3},
     PAGES = {383--406},
      ISSN = {1024-2953},
   MRCLASS = {60K35 (82C20)},
  MRNUMBER = {3156958},
}

@article {blondel,
    AUTHOR = {Blondel, Oriane},
     TITLE = {Front progression in the {E}ast model},
   JOURNAL = {Stochastic Process. Appl.},
  FJOURNAL = {Stochastic Processes and their Applications},
    VOLUME = {123},
      YEAR = {2013},
    NUMBER = {9},
     PAGES = {3430--3465},
      ISSN = {0304-4149},
   MRCLASS = {60K35 (60F15 82C20)},
  MRNUMBER = {3071385},
MRREVIEWER = {Alexandros Sopasakis},
       DOI = {10.1016/j.spa.2013.04.014},
       URL = {http://dx.doi.org/10.1016/j.spa.2013.04.014},
}

@article {GLM15,
    AUTHOR = {Ganguly, Shirshendu and Lubetzky, Eyal and Martinelli, Fabio},
     TITLE = {Cutoff for the east process},
   JOURNAL = {Comm. Math. Phys.},
  FJOURNAL = {Communications in Mathematical Physics},
    VOLUME = {335},
      YEAR = {2015},
    NUMBER = {3},
     PAGES = {1287--1322},
      ISSN = {0010-3616},
   MRCLASS = {82C22},
  MRNUMBER = {3320314},
MRREVIEWER = {Antoine Tordeux},
       DOI = {10.1007/s00220-015-2316-x},
       URL = {http://dx.doi.org/10.1007/s00220-015-2316-x},
}

@article {blondeldeshayes,
    AUTHOR = {Blondel, Oriane and Deshayes, Aurelia and Toninelli, Cristina},
     TITLE = {Front evolution of the {F}redrickson-{A}ndersen one spin
              facilitated model},
   JOURNAL = {Electron. J. Probab.},
  FJOURNAL = {Electronic Journal of Probability},
    VOLUME = {24},
      YEAR = {2019},
     PAGES = {Paper No. 1, 32},
      ISSN = {1083-6489},
   MRCLASS = {60K35 (60J27 82C22)},
  MRNUMBER = {3903501},
       DOI = {10.1214/18-EJP246},
       URL = {https://doi.org/10.1214/18-EJP246},
}

@article {deshayes2014,
    AUTHOR = {Deshayes, Aurelia},
     TITLE = {The contact process with aging},
   JOURNAL = {ALEA, Lat. Am. J. Probab. Math. Stat.},
      VOLUME = {11},
      YEAR = {2014},
     PAGES = {845–883},
      ISSN = {1980-0436},
   MRCLASS = {60K35 (60J27 82C22)},
       URL = {https://alea.impa.br/articles/v11/11-37.pdf},
}

@book{liggett1985interacting,
  title     = {Interacting Particle Systems},
  author    = {Liggett, Thomas M.},
  year      = {1985},
  publisher = {Springer},
  series    = {Grundlehren der mathematischen Wissenschaften},
  volume    = {276},
  address   = {New York},
  isbn      = {978-0-387-95555-7}
}

@article{krone1999,
  author  = {Steve Krone},
  title   = {The Two-Stage Contact Process},
  journal = {Journal of Applied Probability},
  year    = {1999},
  volume  = {36},
  number  = {4},
  pages   = {1096--1108},
  doi     = {10.1234/jap.1999.36.4.1096}  
}

@article{kingman1973subadditive,
  author       = {Kingman, John F. C.},
  title        = {Subadditive Ergodic Theory},
  journal      = {The Annals of Probability},
  volume       = {1},
  number       = {6},
  pages        = {883--909},
  year         = {1973},
  doi          = {10.1214/aop/1176996798},
}

@article{durrett1980growth,
  author       = {Richard Durrett},
  title        = {On the Growth of One‐Dimensional Contact Processes},
  journal      = {The Annals of Probability},
  volume       = {8},
  number       = {5},
  pages        = {890--907},
  year         = {1980},
}

@article{durrett_schinazi2000boundary,
  author       = {Durrett, Richard and Schinazi, Rinaldo B.},
  title        = {Boundary Modified Contact Processes},
  journal      = {Journal of Theoretical Probability},
  volume       = {13},
  number       = {2},
  pages        = {575--594},
  year         = {2000},
  doi          = {10.1023/A:1007881121529},
}
@article{durrett_griffeath1983supercritical,
  author       = {Durrett, Richard and Griffeath, David},
  title        = {Supercritical Contact Processes on \(\mathbb{Z}\)},
  journal      = {The Annals of Probability},
  volume       = {11},
  number       = {1},
  pages        = {1--15},
  year         = {1983},
  doi          = {10.1214/aop/1176993655},
}

@article {durrettgriffeathZ,
    AUTHOR = {Durrett, Richard and Griffeath, David},
     TITLE = {Supercritical contact processes on {${\bf Z}$}},
   JOURNAL = {Ann. Probab.},
  FJOURNAL = {The Annals of Probability},
    VOLUME = {11},
      YEAR = {1983},
    NUMBER = {1},
     PAGES = {1--15},
      ISSN = {0091-1798},
     CODEN = {APBYAE},
   MRCLASS = {60K35 (60F15)},
  MRNUMBER = {682796 (84i:60143)},
MRREVIEWER = {Lawrence Gray},
       URL ={http://links.jstor.org/sici?sici=0091-1798(198302)11:1<1:SCPO>2.0.CO;2-D&origin=MSN},
}

@article{Valesin2010Multitype,
  author  = {Valesin, Daniel},
  title   = {Multitype Contact Process on $\mathbb{Z}$: Extinction and Interface},
  journal = {Electronic Journal of Probability},
  volume  = {15},
  year    = {2010},
  pages   = {2220--2260},
  doi     = {10.1214/EJP.v15-836}
}

@book{SIT,
author = {Dyck, V and Hendrichs, Jorge and Robinson,},
year = {2005},
month = {01},
pages = {},
title = {Sterile Insect Technique Principles and Practice in Area-Wide Integrated Pest Management},
isbn = {978-1-4020-4050-4},
doi = {10.1007/1-4020-4051-2}
}

@article{ISI_north,
   author = "North, D T",
   title = "Inherited Sterility in Lepidoptera", 
   journal= "Annual Review of Entomology",
   year = "1975",
   volume = "20",
   number = "Volume 20, 1975",
   pages = "167-182",
   doi = "https://doi.org/10.1146/annurev.en.20.010175.001123",
   url = "https://www.annualreviews.org/content/journals/10.1146/annurev.en.20.010175.001123",
   publisher = "Annual Reviews",
   issn = "1545-4487",
   type = "Journal Article",
  }

\printbibliography
\end{document}